\numberwithin{equation}{section}
\def\today{\ifcase\month\or Jan\or Febr\or  Mar\or  Apr\or May\or Jun\or  Jul\or Aug\or  Sep\or  Oct\or Nov\or  Dec\or\fi \space\number\day, \number\year}
\newcommand{\Sy}{{\mathrm{Sym}}} 
\newcommand{\CC}{\mathbb C}
\newcommand{\EE}{\mathbb E}
\newcommand{\FF}{\mathbb F}
\newcommand{\NN}{\mathbb N}
\newcommand{\QQ}{\mathbb Q}
\newcommand{\ZZ}{\mathbb Z}
\newcommand{\GL}{\rm GL}
\newcommand{\Sym}{\rm Sym}
\numberwithin{equation}{section}
\newcommand{\bigxor}{\mathop{\mathchoice
  {\textstyle\bigoplus}{\textstyle\bigoplus}
  {\scriptstyle\bigoplus}{\scriptscriptstyle\bigoplus}}}
\newtheorem{theorem}{Theorem}[section]
\newtheorem{proposition}[theorem]{Proposition}
\newtheorem{corollary}[theorem]{Corollary}
\newtheorem{definition-lemma}[theorem]{Definition-Lemma}
\theoremstyle{definition}
\theoremstyle{remark}
\newtheorem{remark}[theorem]{Remark}
\newtheorem{question}[theorem]{Question}
\begin{document}

\title[]{Constructing vector-valued Siegel modular forms from scalar-valued Siegel modular forms}
\author{Fabien Cl\'ery}
\address{Department Mathematik,
Universit\"at Siegen,
Emmy-Noether-Campus,
Walter-Flex-Strasse 3,
57068 Siegen, Germany}
\email{cleryfabien@gmail.com}

\author{Gerard van der Geer}
\address{Korteweg-de Vries Instituut, Universiteit van
Amsterdam, Postbus 94248,
1090 GE  Amsterdam, The Netherlands.}
\email{geer@science.uva.nl}

\subjclass{14J15, 10D}
\begin{abstract}
This paper gives a simple method for constructing vector-valued Siegel modular forms
from scalar-valued ones. The method is efficient in producing 
the siblings of Delta, the  smallest weight cusp forms that appear 
in low degrees. 
It also shows the strong relations between these modular
forms of different degrees.
We illustrate this by a number of examples.
\end{abstract}

\maketitle
\begin{section}{Introduction}\label{sec-intro}
In this paper we describe a simple method to construct 
vector-valued Siegel modular
forms from scalar-valued ones by developing modular forms in the
normal bundle of a locus on which they vanish.
If $f$ is a scalar-valued Siegel modular form of degree $g$ and weight $k$,
given by a holomorphic function on the Siegel upper half space 
$\mathfrak{H}_g$ and $1\leq j\leq g-1$ an integer, then the restriction
of $f$ to the diagonally embedded $\frak{H}_{j} \times \frak{H}_{g-j}$ in
$\mathfrak{H}_g$
gives a tensor product $f' \otimes f^{\prime \prime}$ of modular
forms of weight $k$ and degree $j$ and $g-j$. If $f$ vanishes on 
$\mathfrak{H}_{j} \times \mathfrak{H}_{g-j}$, 
then we can develop $f$ in the normal bundle of 
$\mathfrak{H}_{j} \times \mathfrak{H}_{g-j}$ inside $\mathfrak{H}_g$
and one finds as lowest non-zero term a sum of tensor products 
of vector-valued Siegel  modular forms of degree $j$ and $g-j$. 
By applying this to well-known scalar-valued Siegel modular forms one
can produce explicit vector-valued Siegel modular forms on the full group
${\rm Sp}(2g,{\ZZ})$. 
In this way one can produce for example many siblings of Delta,
modular forms that play for a given low degree $g$ a role analogous 
to the role that
$\Delta$, the cusp form of weight $12$ on ${\rm SL}(2,{\ZZ})$, 
plays for elliptic modular
forms. That is, the cusp forms that appear among the first few 
if one orders these according to their Deligne weight. 
For example, for degree $2$ the first cusp forms that appear 
are a cusp form $\chi_{10}$ of weight
$10$, a form $\chi_{12}$ of weight $12$ 
and a vector-valued form $\chi_{6,8}$ of weight $(j,k)=(6,8)$ of
Deligne weight $j+2k-3$ equal to $17$, $21$ and $19$.  
The forms $\chi_{10}$ and $\chi_{6,8}$ appear immediately
in the development of the Schottky form, 
a scalar-valued cusp form of weight $8$
in degree $4$, along $\mathfrak{H}_2 \times \mathfrak{H}_2$.  
Similarly, the first cusp form in degree $3$ is a 
cusp form of weight $(4,0,8)$ and is  also obtained by developing
the Schottky form, this time
along $\mathfrak{H}_1 \times \mathfrak{H}_3$.

Recently there has been quite some progress in our knowledge
of vector-valued Siegel modular forms of low degree; besides 
\cite{B-F-vdG} there is the impressive 
work of Chenevier and Renard and Ta\"{\i}bi (see \cite{C, C-R, Taibi})
who among other things give
conjectural values for the dimensions of spaces of cusp forms for 
degree $g \leq 6$.  As an illustration of our method  we 
construct cusp forms in a number of 
cases where the dimension of the space is predicted to be~$1$. 
For example, we construct the `first' cusp form in degree $5$,
of weight $(2,0,0,0,10)$.
Likewise, we construct a form of weight $(2,0,0,0,0,0,10)$ 
in degree~$7$, again of rather low Deligne weight ($44$) compared with
that ($56$) of the first scalar-valued cusp form (of weight $12$).

Just like the ubiquitous $\Delta$, the  siblings of $\Delta$ appear 
to play a role at many places, e.g.\ in the study of K3 surfaces, 
and they all appear to be intimately connected.
We study the form $\chi_{6,8}$ and some other siblings of Delta 
in more detail, giving alternative constructions
and calculating eigenvalues of Hecke operators. 
In an appendix we summarize some facts about Hecke operators of 
degree~$2$ and~$3$ that we need.

\end{section}

\begin{section}{Restricting scalar-valued modular forms}\label{restriction}
Let $\Gamma_g={\rm Sp}(2g,{\ZZ})$ be the symplectic group of degree $g$
acting in the usual way on the Siegel upper half space $\frak{H}_g$:
$$
\tau \mapsto (a\tau+b)(c\tau+d)^{-1} 
\qquad \text{for all $\tau\in \frak{H}_g$ and
$\left( \begin{smallmatrix} a & b \\ c & d \\ \end{smallmatrix} \right) 
\in \Gamma_g$ .} 
$$
We denote by $M_k(\Gamma_g)$ the space of scalar-valued Siegel modular forms
of weight $k$ on $\Gamma_g$, that is, holomorphic functions on $\frak{H}_g$ satisfying
$$
f((a\tau+b)(c\tau+d)^{-1})= \det (c\tau+d)^k f(\tau)
$$
for all $(a,b;c,d) \in \Gamma_g$ and
$\tau\in \frak{H}_g$ (and satisfying an additional
holomorphicity condition at infinity for $g=1$).
More generally, if $g>1$ and $\rho: {\rm GL}_g \to {\rm GL}(V)$
denotes a finite-dimensional complex  representation of ${\rm GL}_g$,
then by $M_{\rho}(\Gamma_g)$ we mean the vector space of holomorphic
functions $f: \frak{H}_g \to V$ such that
$$
f((a\tau+b)(c\tau+d)^{-1})=\rho (c\tau+d) f(\tau)\, .
$$
If $\rho$ is an irreducible complex representation of ${\GL}_g$
of highest weight
$w=(a_1\geq a_2 \geq \cdots \geq a_g)$ then we denote
the complex vector space
$M_{\rho}(\Gamma_g)$ by $M_{\bf k}(\Gamma_g)$ with ${\bf k}=(a_1-a_2,a_2-a_3,\ldots, a_{g-1}-a_g,a_g)$.
The subspaces of cusp forms are denoted by $S_k(\Gamma_g)$ and
$S_{\rho}(\Gamma_g)$. These cusp forms can be interpreted as sections of
a line bundle or a vector bundle on the quotient space ${\mathcal A}_g=
\Gamma_g \backslash \frak{H}_g$.

If $f \in M_k(\Gamma_g)$ is a
scalar-valued modular form of weight $k$
 we can restrict $f$ to $\frak{H}_j \times\frak{H}_{g-j}$,
where we use the modular embedding
$ \Gamma_j \times \Gamma_{g-j} \to \Gamma_g$,
given by
$$
\left(\left( 
\begin{smallmatrix} a & b \\ c & d \\ \end{smallmatrix}  \right)
,
\left( 
\begin{smallmatrix} \alpha & \beta \\  \gamma & \delta \\ 
\end{smallmatrix}  \right) \right)
\mapsto 
\left( 
\begin{smallmatrix} 
a & 0 & b & 0 \\
0 & \alpha & 0 & \beta \\
c & 0 & d  & 0 \\
0 & \gamma & 0 & \delta \\
\end{smallmatrix}
\right)
$$
with corresponding map for the symmetric spaces
$$
\frak{H}_j \times \frak{H}_{g-j}\to \frak{H}_g,
\quad (\tau^{\prime}, \tau^{\prime\prime}) \mapsto
\left( 
\begin{matrix} \tau^{\prime} & 0 \\ 0 & \tau^{\prime \prime}\end{matrix}
\right) 
$$
with image ${\mathcal A}_{j,g-j}$ of ${\mathcal A}_j \times {\mathcal A}_{g-j}$ in ${\mathcal A}_g$.
The result is a tensor product of modular forms
$f^{\prime} \otimes f^{\prime \prime}
\in M_k(\Gamma_j)\otimes M_k(\Gamma_{g-j})$.
In \cite{Witt} Witt used this method to study
Siegel modular form of degree $2$.
However, often this restriction vanishes. If this is the case
we can develop $f$ in the conormal bundle of
$\frak{H}_j \times \frak{H}_{g-j}$ in $\frak{H}_g$.
This conormal bundle is a vector bundle of rank $j \, (g-j)$
with an action of
$\Gamma_j \times \Gamma_{g-j}$ and it descends to the tensor product
$$
N^{\vee}:={\EE}_j \boxtimes {\EE}_{g-j}:= 
p_j^*({\EE}_j) \otimes p_{g-j}^*({\EE}_{g-j})
$$
on ${\mathcal A}_j\times {\mathcal A}_{g-j}$ 
of the pullbacks
of Hodge bundles $\EE_j$ and $\EE_{g-j}$
on the factors ${\mathcal A}_j$ and ${\mathcal A}_{g-j}$.
Here $p_j$ (resp.\ $p_{g-j}$) denotes the projection of
$\Gamma_j \backslash \frak{H}_j \times \Gamma_{g-j} \backslash \frak{H}_{g-j}$
onto the factor $\Gamma_j \backslash \frak{H}_j$ (resp.\
$\Gamma_{g-j} \backslash \frak{H}_{g-j})$.
This readily can be checked by
a direct computation, but also follows from the observation that the
cotangent space to the moduli space
${\mathcal A}_g = \Gamma_g \backslash \frak{H}_g$ of principally polarized
complex abelian varieties
at a (general) 
point $X^{\prime}\times X^{\prime\prime}$  with $X^{\prime}$ (resp.\
$X^{\prime\prime}$) a principally polarized abelian variety
of dimension $j$ (resp.\ $g-j$), can be identified with
$$
{\rm Sym}^2(T_{X^{\prime}}^{\vee}) \oplus 
(T^{\vee}_{X^{\prime}}\otimes T^{\vee}_{X^{\prime\prime}}) 
\oplus  {\rm Sym}^2(T_{X^{\prime\prime}}^{\vee}),
$$
with $T_X=T_{X'}\oplus T_{X^{\prime\prime}}$
denoting the tangent space to $X$ at the origin,
and the middle term corresponds to the (co-)normal space.
Since for $g=2j$ the map of ${\mathcal A}_j \times {\mathcal A}_j \to
{\mathcal A}_g$ has degree $2$, we see the development of $f$
along $\mathfrak{H}_j \times \mathfrak{H}_j$ is symmetric, that is, 
invariant under the interchange of factors.

The Hodge bundle ${\EE}_g$ on ${\mathcal A}_g$ is associated to the
standard representation of ${\GL}_g$. 
In concrete terms it corresponds to the factor of automorphy $c\tau +d$. 
Its determinant is denoted $L_g$.
The bundle $N^{\vee}$ then corresponds to the tensor product of the 
standard representations of ${\GL}_j$ and ${\GL}_{g-j}$.

If $f$ is a scalar-valued modular form of weight $k$, that is,
a section of $L_g^k$, we can develop
$f$ in a point 
$(\tau^{\prime},\tau^{\prime\prime})\in \mathfrak{H}_j \times \mathfrak{H}_{g-j}$
in a Taylor series: if we write
$\tau \in \mathfrak{H}_g$ 
as $\left( \begin{smallmatrix} \tau' & z \\
z^t & \tau^{\prime\prime}\end{smallmatrix} \right)$
then we have
$$
f= t_0(f)+t_1(f)+t_2(f)+ \ldots \, , \eqno(1)
$$
where $t_r(f)$ is the sum of the terms of degree $r$ in the
coordinates $z_{a,b}$ with $1\leq a \leq j, 1\leq b \leq g-j$ of $z$;
in other words we develop $f$ along $\frak{H}_j \times 
\frak{H}_{g-j}$. If $t_0(f)$ vanishes then
we can interpret the term $t_1(f)$ as a section
of the conormal bundle tensored with $L_g^k$ 
over $\frak{H}_j \times \frak{H}_{g-j}$
and it descends to a section of the vector bundle
$L_j^k \otimes L_{g-j}^k \otimes N^{\vee}$
on ${\mathcal A}_j \times {\mathcal A}_{g-j}$, where
$L_j^k \otimes L_{g-j}^k$ is the restriction of the line
bundle $L_g^k$ on ${\mathcal A}_g$ 
whose sections are modular forms of weight $k$.
More generally, if $t_i(f)=0$ for $i=0,\ldots,r-1$ then $t_r(f)$
gives a section of $L_j^k\otimes L_{g-j}^k \otimes {\rm Sym}^r(N^{\vee})$.
In fact, the $n$th jet bundle $B_n$ tensored by $L_g^k$ admits a filtration
$$
L_g^k \otimes B_0 \subset L_g^k\otimes B_1 \subset \ldots \subset 
L_g^k \otimes B_n
$$
and the quotient $B_j/B_{j-1}$ is isomorphic to 
${\rm Sym}^j(N^{\vee})$.

We consider the $r$th symmetric power ${\rm Sym}^r(N^{\vee})$. 
Note that if we
have two vector spaces $V_1$ and $V_2$ of dimension $j$ and $g-j$ with
the standard
${\rm GL}_j$ and ${\rm GL}_{g-j}$-action then we have a
natural map
$$
{\rm Sym}^r(V_1\otimes V_2) \to {\rm Sym}^r(V_1) \otimes {\rm Sym}^r(V_2)
$$
which is an isomorphism if either $V_1$ or $V_2$ has dimension $1$.
This happens if $j=1$ and then 
${\rm Sym}^r(\EE_1 \otimes \EE_{g-1}) \cong {\EE}_1^r \otimes 
{\rm Sym}^{r}({\EE}_{g-1})$.
We begin by considering this case.

\begin{proposition}
Let $f \in M_k(\Gamma_g)$. If the restriction of $f$ along
$\frak{H}_1 \times\frak{H}_{g-1}$ vanishes to order $r$ and not to
higher order, then
the lowest order terms of $f$ along
$\frak{H}_1 \times \frak{H}_{g-1}$
define a non-vanishing element
$f'\otimes f^{\prime\prime} \in M_{k'}(\Gamma_1) \otimes M_{k^{\prime\prime}}(\Gamma_{g-1})$
where the weight $k'$ of $f'$ equals $k+r$ and that of $f^{\prime\prime}$
equals $k^{\prime\prime}=(r,0,\ldots,0,k)$.
\end{proposition}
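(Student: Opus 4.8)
The plan is to deduce the statement by specializing, to the case $j=1$, the general description of the Taylor development $(1)$ established above; there is essentially no new idea, the work being the bookkeeping with weights. First I would record that ``vanishing to order exactly $r$ along $\mathfrak{H}_1\times\mathfrak{H}_{g-1}$'' means $t_0(f)=\dots=t_{r-1}(f)=0$ identically while $t_r(f)\not\equiv0$. By the filtration of the jet bundle recalled above, whose successive quotients are the $\Sym^i(N^{\vee})$ with $N^{\vee}=\EE_1\boxtimes\EE_{g-1}$, the term $t_r(f)$ then descends to a \emph{non-zero} global section $s$ of $L_1^k\otimes L_{g-1}^k\otimes\Sym^r(N^{\vee})$ on $\mathcal{A}_1\times\mathcal{A}_{g-1}$; it is holomorphic because $t_r(f)$ is a polynomial in the partial derivatives of the holomorphic function $f$, and its behaviour at the cusps is inherited from that of $f$.

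Next I would simplify $\Sym^r(N^{\vee})$ by using that, for $j=1$, the natural map $\Sym^r(\EE_1\otimes\EE_{g-1})\to\EE_1^{\otimes r}\otimes\Sym^r(\EE_{g-1})$ is an isomorphism, so that $s$ becomes a section of $(L_1^k\otimes\EE_1^{\otimes r})\boxtimes(L_{g-1}^k\otimes\Sym^r(\EE_{g-1}))$. On the first factor $\mathcal{A}_1$ the Hodge bundle $\EE_1$ is a line bundle equal to its own determinant $L_1$, hence $L_1^k\otimes\EE_1^{\otimes r}=L_1^{k+r}$, whose sections are the modular forms of weight $k+r$ on $\Gamma_1$; this gives $k'=k+r$. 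On the second factor $\mathcal{A}_{g-1}$ the bundle $L_{g-1}^k\otimes\Sym^r(\EE_{g-1})$ is the automorphic vector bundle attached to the irreducible $\GL_{g-1}$-representation $\Sym^r(\mathrm{std})\otimes\det^{\otimes k}$, whose highest weight is $(k+r,k,\dots,k)$; in the normalization fixed above this weight is $(r,0,\dots,0,k)$, so $k''=(r,0,\dots,0,k)$.

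To conclude I would pass from a section of an external tensor product to an element of a tensor product of spaces of modular forms: interpreting each space of sections through a suitable compactification of the corresponding factor, the Künneth formula identifies $H^0(\mathcal{A}_1\times\mathcal{A}_{g-1},\mathcal{F}_1\boxtimes\mathcal{F}_2)$ with $H^0(\mathcal{A}_1,\mathcal{F}_1)\otimes H^0(\mathcal{A}_{g-1},\mathcal{F}_2)$; applied to $s$ this exhibits $t_r(f)$ as a non-zero element of $M_{k+r}(\Gamma_1)\otimes M_{(r,0,\dots,0,k)}(\Gamma_{g-1})$, the element denoted $f'\otimes f''$ in the statement, which is what is claimed.

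I do not expect a serious obstacle, since the only substantive input --- the identification of the conormal bundle with $\EE_1\boxtimes\EE_{g-1}$ together with its $\Gamma_1\times\Gamma_{g-1}$-equivariant structure --- has already been established above. For a self-contained verification one can instead argue by hand: writing $\tau=\left(\begin{smallmatrix}\tau' & z\\ z^t & \tau''\end{smallmatrix}\right)$ with $\tau'\in\mathfrak{H}_1$, $\tau''\in\mathfrak{H}_{g-1}$ and $z\in\CC^{g-1}$, and acting by the embedded pair with blocks $\left(\begin{smallmatrix}a & b\\ c & d\end{smallmatrix}\right)$ and $\left(\begin{smallmatrix}\alpha & \beta\\ \gamma & \delta\end{smallmatrix}\right)$, one computes that the automorphy matrix of $f$ along this embedding has determinant $(c\tau'+d)\det(\gamma\tau''+\delta)+O(z^2)$ and that the block $z$ transforms to first order by $z\mapsto(c\tau'+d)^{-1}z\,(\gamma\tau''+\delta)^{-1}$. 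Substituting into the transformation law of $f$ and comparing the lowest-order (degree $r$) terms in $z$, the scalar $c\tau'+d$ occurs to the power $k$ (from the weight-$k$ automorphy factor of $f$) and to a further power $r$ (from inverting the linear substitution in $z$), so the degree-one factor has weight $k+r$; on the degree-$(g-1)$ factor one reads off $\det(\gamma\tau''+\delta)^k$ together with the $r$-th symmetric power of $\gamma\tau''+\delta$, i.e.\ the weight $(r,0,\dots,0,k)$. The only remaining care is in translating between highest weights and the ${\bf k}$-convention.
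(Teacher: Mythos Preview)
Your proposal is correct and follows essentially the same approach as the paper: identify $t_r(f)$ as a non-zero section of $L_g^k\otimes\Sym^r(N^{\vee})$ restricted to $\mathcal{A}_{1,g-1}$, then use the isomorphism $\Sym^r(\EE_1\boxtimes\EE_{g-1})\cong\EE_1^{\otimes r}\boxtimes\Sym^r(\EE_{g-1})$ (valid because $\EE_1$ has rank one) together with $\EE_1=L_1$ to read off the weights $k+r$ and $(r,0,\dots,0,k)$. The paper's proof is just these two sentences; your added K\"unneth remark and the direct automorphy-factor verification are welcome elaborations but not a different route.
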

\begin{proof} If $t_j(f)=0$ for $j<r$ but $t_r(f)\neq 0$ 
then $t_r(f)$ defines a non-zero
section of ${\rm Sym}^r(N^{\vee})\otimes L^k_{|{\mathcal A}_{1,g-1}}$
and the pullback of this to ${\mathcal A}_1\times {\mathcal A}_{g-1}$ 
is equal to ${\rm \EE}_1^{r+k} \otimes {\rm Sym}^r({\EE}_{g-1})\otimes L_{g-1}^k$.
\end{proof}

For given partition $g=j+(g-j)$ of $g$ we let ${\rm GL}_j \times {\rm GL}_{g-j}$
be the subgroup of ${\rm GL}_g$ that respects a decomposition $V=V_1\oplus V_2$.
In general, the space ${\Sym}^k(V_1\otimes V_2)$ admits a decomposition
as a direct sum of tensor products of irreducible representations of
${\rm GL}_j \times {\rm GL}_{g-j}$
in which the term ${\rm Sym}^{k}(V_1) \otimes {\rm Sym}^k(V_2)$
has high codimension.

If $f$ is a scalar-valued Siegel modular form of weight $k$
vanishing up to order $r$ along
$\frak{H}_j \times \frak{H}_{g-j}$ then we look at its term of order $r$.
This defines a section of
$$
L_j^k \boxtimes L_{g-j}^k  \otimes {\rm Sym}^r( \EE_j \boxtimes \EE_{g-j})
$$
and we can project it to each of the irreducible constituents of this bundle;
in particular we can consider the projection of the second factor to
${\rm Sym}^r(\EE_j) \boxtimes {\rm Sym}^{r}(\EE_{g-j})$.
The following proposition describes the result.

\begin{proposition}
If $f\in M_k(\Gamma_g)$ vanishes up to order $r$ along
$\frak{H}_j\times \frak{H}_{g-j}$ then the projection of its $r$th term
to $L_j^k\otimes L_{g-j}^k 
 \otimes {\rm Sym}^r(\EE_j) \otimes {\rm Sym}^{r}(\EE_{g-j})$ gives a
tensor product $f' \otimes f^{\prime\prime}$
with $f'\in M_{{k^{\prime}}}(\Gamma_j)$
(resp.\ $f^{\prime\prime} \in  M_{k^{\prime\prime}}(\Gamma_{g-j})$) with
$k^{\prime}$ (resp.\ $k^{\prime\prime}$) 
a vector of length $j$ (resp.\
 $g-j)$ of the form  $(r,0,\ldots,0,k)$ .
\end{proposition}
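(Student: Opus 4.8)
The plan is to follow the pattern of the proof of the previous proposition, adding one new step: the equivariant projection onto the ``corner'' constituent ${\rm Sym}^r(\EE_j)\otimes{\rm Sym}^r(\EE_{g-j})$. First I would extract the $r$th term as a section of the right bundle. By hypothesis $t_i(f)=0$ for $i<r$, so in the filtration $L_g^k\otimes B_0\subset\cdots\subset L_g^k\otimes B_r$ the class of $f$ already vanishes in $L_g^k\otimes B_{r-1}$, and therefore $t_r(f)$ is a well-defined holomorphic section of the graded quotient $L_g^k\otimes(B_r/B_{r-1})\cong L_g^k\otimes{\rm Sym}^r(N^\vee)$ over $\mathcal{A}_{j,g-j}$. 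Pulling this back to $\mathcal{A}_j\times\mathcal{A}_{g-j}$ and using that $N^\vee=\EE_j\boxtimes\EE_{g-j}$ while $L_g^k$ restricts to $L_j^k\boxtimes L_{g-j}^k$, I obtain a section $s$ of $L_j^k\boxtimes L_{g-j}^k\otimes{\rm Sym}^r(\EE_j\boxtimes\EE_{g-j})$. All of this is already established in the discussion preceding the proposition, so it can simply be quoted.

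Next I would use the natural ${\rm GL}_j\times{\rm GL}_{g-j}$-equivariant map ${\rm Sym}^r(V_1\otimes V_2)\to{\rm Sym}^r(V_1)\otimes{\rm Sym}^r(V_2)$ recorded above: being a morphism of ${\rm GL}_j\times{\rm GL}_{g-j}$-representations, it induces a morphism of the associated vector bundles ${\rm Sym}^r(\EE_j\boxtimes\EE_{g-j})\to{\rm Sym}^r(\EE_j)\boxtimes{\rm Sym}^r(\EE_{g-j})$ on $\mathcal{A}_j\times\mathcal{A}_{g-j}$. Composing it with $s$ produces a section of $\big(L_j^k\otimes{\rm Sym}^r(\EE_j)\big)\boxtimes\big(L_{g-j}^k\otimes{\rm Sym}^r(\EE_{g-j})\big)$, which by the K\"unneth formula (equivalently, by the classical fact that an external tensor product of vector-valued modular forms accounts for all forms attached to an external tensor representation) lies in $M_{k'}(\Gamma_j)\otimes M_{k''}(\Gamma_{g-j})$; that is, it is a sum of pure tensors $f'\otimes f''$. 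It then remains to read off the two weights. The bundle $L_j^k\otimes{\rm Sym}^r(\EE_j)$ is attached to the irreducible ${\rm GL}_j$-representation ${\rm Sym}^r({\rm std})\otimes\det^k$, whose highest weight is $(k+r,k,\ldots,k)$ of length $j$; translating into the normalization introduced above gives the weight $k'=(r,0,\ldots,0,k)$, and the identical computation for the second factor gives $k''=(r,0,\ldots,0,k)$ of length $g-j$. (When $j=1$ or $g-j=1$ one also notes that holomorphy at the cusp of the relevant factor is inherited from that of $f$.)

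I do not expect a genuine obstacle. The only point demanding care is that the lowest non-vanishing Taylor term really is a coordinate-free, $\Gamma_j\times\Gamma_{g-j}$-equivariant section of the graded piece ${\rm Sym}^r(N^\vee)\otimes L_g^k$ rather than merely of the jet bundle $B_r$ --- but this is precisely what the jet-bundle filtration set up before the proposition provides, so nothing new has to be proved; the remaining ingredients (equivariance of the symmetric-power projection, the K\"unneth identification, and the highest-weight bookkeeping) are routine. I would, however, flag that the projected section may vanish identically, since ${\rm Sym}^r(V_1)\otimes{\rm Sym}^r(V_2)$ has high codimension in ${\rm Sym}^r(V_1\otimes V_2)$: the proposition asserts only that $f'\otimes f''$ lies in the stated space, not that it is non-zero, and non-vanishing has to be checked by hand in each application.
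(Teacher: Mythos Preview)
Your proposal is correct and follows the same approach as the paper; in fact the paper gives no separate proof of this proposition at all, treating it as an immediate consequence of the discussion preceding it (the $r$th term is a section of $L_j^k\boxtimes L_{g-j}^k\otimes{\rm Sym}^r(\EE_j\boxtimes\EE_{g-j})$, and one projects to the constituent ${\rm Sym}^r(\EE_j)\boxtimes{\rm Sym}^r(\EE_{g-j})$). Your write-up is simply a more explicit and careful version of that same argument, and your remark that the projection may be a sum of pure tensors (and may vanish) is a worthwhile caveat that the paper's phrasing glosses over.
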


In general, if $V_n$ is the standard representation of ${\rm GL}_n$
and we write it as $V_n=V_j \oplus V_{g-j}$
then we have a decomposition of ${\rm Sym}^r(V_j\otimes V_{g-j})$ 
as a direct sum of isotypic subspaces for ${\GL}_j \times {\GL}_{g-j}$
$$
{\rm Sym}^r(V)= \oplus m_{\lambda',\lambda^{\prime\prime}} W_{\lambda'} \otimes W_{\lambda^{\prime\prime}} \eqno(2)
$$
If the restriction of a scalar-valued Siegel modular form $f$
vanishes up to order $r$ along $\frak{H}_j\times \frak{H}_{g-j}$,
then the projection onto any of the isotypic spaces in (2) gives a
vector-valued modular form. 

But we can also find modular forms 
further on in the Taylor expansion. 
The following proposition gives an example.

We write a form $f \in M_k(\Gamma_g)$ as 
a Taylor expansion $f=\sum_{n\geq 0} t_r(f)$ and decompose each term in its
isotypic components $t_n(f)=\sum_{\lambda} t_{n,\lambda}(f)$, where
$\lambda$ indexes the irreducible representations $R_{\lambda}$ 
that occur in
${\rm Sym}^n(V_j\otimes V_{g-j})$.

\begin{proposition}\label{variant3}
Let $f\in M_{k}(\Gamma_g)$ vanish to order $r$ on
$\frak{H}_j \times \frak{H}_{g-j}$ and write $f=\sum_{n\geq r} t_r(f)$.
Let $R_{\alpha}$ be an irreducible representation of ${\rm GL}_j\times
{\rm GL}_{g-j}$ occurring in the representation
${\rm Sym}^{r+1}(V_j\otimes V_{g-j})$ that does not occur in
$(V_j \otimes V_{g-j}) \otimes R_{\beta}$ for any $R_{\beta}$
for which $t_{r,\beta}(f)\neq 0$.
Then the projection $t_{r+1,\alpha}(f)$ of
$f$ to the $\alpha$-isotypic component of the $(r+1)$th step
is a modular form.
\end{proposition}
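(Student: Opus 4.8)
The plan is to read the behaviour of the Taylor coefficients $t_n(f)$ under $\Gamma_j\times\Gamma_{g-j}$ off the $\Gamma_g$-modularity of $f$, and then to quantify exactly the failure of $t_{r+1}(f)$ to be a section of a homogeneous bundle. Write $\tau=\left(\begin{smallmatrix}\tau'&z\\ z^t&\tau''\end{smallmatrix}\right)$ and let $(\gamma_1,\gamma_2)\in\Gamma_j\times\Gamma_{g-j}$ act through the block-diagonal embedding, writing $\gamma\cdot\tau=\left(\begin{smallmatrix}\tau'_\gamma&z_\gamma\\ z_\gamma^t&\tau''_\gamma\end{smallmatrix}\right)$. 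A direct computation with the $2g\times 2g$ matrices shows that $z_\gamma$ is, to first order in $z$, the image of $z$ under the automorphy factor of the conormal bundle $N^\vee=\EE_j\boxtimes\EE_{g-j}$, and that $\tau'_\gamma$, $\tau''_\gamma$ and the automorphy factor $\det(c\tau+d)^k$ reduce, modulo terms of order $\geq 2$ in $z$, to $\gamma_1\cdot\tau'$, $\gamma_2\cdot\tau''$ and the automorphy factor of $L_j^k\boxtimes L_{g-j}^k$ respectively. Feeding these expansions into $f(\gamma\tau)=\det(c\tau+d)^k f(\tau)$ and comparing coefficients of $z$ degree by degree yields a triangular system: the degree-$n$ identity says that $t_n(f)$ transforms as a section of $L_j^k\boxtimes L_{g-j}^k\otimes\Sym^n(N^\vee)$ up to a $\gamma$-dependent linear correction in the strictly lower coefficients $t_m(f)$, $m<n$.

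Since $f$ vanishes to order $r$ along $\frak H_j\times\frak H_{g-j}$ we have $t_m(f)=0$ for $m<r$, so the degree-$r$ identity is clean (it recovers the preceding proposition for $t_r(f)$) and the degree-$(r+1)$ identity reads, schematically,
$$
t_{r+1}(f)(\gamma\tau)=\rho_{r+1}(\gamma)\,t_{r+1}(f)(\tau)+\mathcal C_\gamma\!\bigl(t_r(f)\bigr),
$$
the would-be contributions of $t_m(f)$, $m<r$, all vanishing; here $\rho_{r+1}$ is the automorphy factor of $L_j^k\boxtimes L_{g-j}^k\otimes\Sym^{r+1}(N^\vee)$ and $\mathcal C_\gamma$ is assembled from the subleading terms of $z_\gamma,\tau'_\gamma,\tau''_\gamma$ and of $\det(c\tau+d)^k$. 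The crucial point is that $\mathcal C_\gamma$ shifts the normal degree by exactly one: each subleading term replaces, in a degree-$r$ monomial in $z$, one factor $z$ by a higher-order expression that is still normal-valued, so on coefficient tensors it contracts a single $N$-slot of $t_r(f)$ and reintroduces exactly one further $N^\vee$-factor. Hence, decomposing $t_r(f)=\sum_\beta t_{r,\beta}(f)$, the contribution of $t_{r,\beta}(f)$ to the $\Sym^{r+1}(N^\vee)$-part of $\mathcal C_\gamma(t_r(f))$ is produced by a $\GL_j\times\GL_{g-j}$-equivariant map that factors through $(V_j\otimes V_{g-j})\otimes R_\beta$; by Schur's lemma its $\alpha$-isotypic projection vanishes identically as soon as $R_\alpha$ does not occur in $(V_j\otimes V_{g-j})\otimes R_\beta$.

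Under the hypothesis on $R_\alpha$ this holds for every $\beta$ with $t_{r,\beta}(f)\neq 0$, so $p_\alpha\,\mathcal C_\gamma(t_r(f))=0$ for all $\gamma\in\Gamma_j\times\Gamma_{g-j}$, whence $t_{r+1,\alpha}(f)(\gamma\tau)=\rho_\alpha(\gamma)\,t_{r+1,\alpha}(f)(\tau)$ with $\rho_\alpha$ the automorphy factor of $L_j^k\boxtimes L_{g-j}^k\otimes R_\alpha$. Holomorphy of $t_{r+1,\alpha}(f)$ on $\frak H_j\times\frak H_{g-j}$ is inherited from that of $f$, and regularity at the boundary is automatic by the Koecher principle when both factors have genus $\geq 2$ and follows from the moderate growth of $f$ when a factor has genus $1$; thus $t_{r+1,\alpha}(f)$ is a Siegel modular form, a tensor product $f'\otimes f''$ whose weights one reads off from $R_\alpha$ and $k$ exactly as in the preceding proposition. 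The step I expect to cost the real work is the middle one: verifying that $\mathcal C_\gamma$ genuinely raises the normal degree by one — that the contribution of $t_{r,\beta}(f)$ stays inside $(V_j\otimes V_{g-j})\otimes R_\beta$ and does not spill into higher tensor powers of $N^\vee$. This needs an honest second- and higher-order expansion of the $\Gamma_g$-action on a formal neighbourhood of $\frak H_j\times\frak H_{g-j}$, equivalently an identification of the maps in the filtration $L_g^k\otimes B_0\subset\cdots\subset L_g^k\otimes B_{r+1}$ of jet bundles that link the graded pieces $\Sym^r(N^\vee)$ and $\Sym^{r+1}(N^\vee)$. Granting this equivariance the rest is Schur's lemma and bookkeeping, and the same pattern — with $(V_j\otimes V_{g-j})$ replaced by the relevant higher tensor powers — governs the modular forms that appear still further along the expansion.
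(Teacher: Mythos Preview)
Your proposal is correct and follows essentially the same route as the paper: expand the functional equation $f(\gamma\tau)=\rho(\gamma,\tau)f(\tau)$ for $\gamma\in\Gamma_j\times\Gamma_{g-j}$ in the normal variable $z$, observe that the degree-$(r+1)$ identity carries a single correction term coming from $t_r(f)$ tensored against the degree-one piece of the expanded automorphy data, identify that piece with $V_j\otimes V_{g-j}$, and conclude by projecting onto the $\alpha$-isotypic component. The paper's argument is terser --- it packages your $\mathcal C_\gamma$ as $t_1(\rho(\gamma,\tau))\,t_r(f(\tau))$ and simply asserts that $t_1(\rho)$ carries the representation $V_j\otimes V_{g-j}$ --- but the substance is the same, and the point you flag as ``the real work'' (that the correction raises normal degree by exactly one) is exactly the assertion the paper makes without further justification.
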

\begin{proof}
Exterior differentiation $d$ induces the homogeneous differential operator on 
$\frak{H}_j \times \frak{H}_{g-j}$ from
${\rm Sym}^r(N^{\vee})$ to ${\rm Sym}^{r+1}(N^{\vee})$.  
In terms of representations
of ${\rm GL}$ it corresponds to multiplication by $V_j \otimes V_{g-j}$.
In fact, the functional equation for $f$ says that
$f(\gamma \tau)=\rho(\gamma,\tau) f(\tau)$ for 
$\gamma \in \Gamma_j \times \Gamma_{g-j} \subset \Gamma_g$
and this implies 
$$
t_r(f(\gamma\tau))+t_{r+1}(f(\gamma\tau))=
t_0(\rho(\gamma,\tau))\, t_r(f(\tau))+
t_1(\rho(\gamma,\tau))\, t_r(f(\tau))+t_0(\rho(\gamma,\tau))\, t_{r+1}(f(\tau))
$$
and since $t_r(f)$ is a scalar-valued modular form we see that 
$$
t_{r+1}(f(\gamma\tau))=t_1(\rho(\gamma,\tau))\, t_r(f(\tau))+
t_0(\rho(\gamma,\tau))\, t_{r+1}(f(\tau))
$$
Note that $t_1(\rho(\gamma,\tau))$ is isomorphic to $V_j\otimes V_{g-j}$
as a ${\rm GL}$-representation.
Projection on the isotypic terms gives the result.
\end{proof}

\begin{question} 
In connection with the method presented here it is an interesting question
what the ideal is of scalar-valued Siegel modular forms of degree $g$
vanishing on $\frak{H}_j \times \frak{H}_{g-j}$. For $g=2$ and $j=1$ 
the answer is the ideal generated by $\chi_{10}$, but for other cases, 
even for $g=3$, the answer seems unknown. See also Igusa 
\cite{IgusaBAMS} and Sasaki \cite{Sasaki}.
\end{question}
\end{section}
\begin{section}{Example: Restricting The Schottky Form}
We illustrate the method by the example of the restriction of the 
Schottky form $J_8$, a cusp form of weight $8$ in degree $4$
along degree $2+2$
and degree $3+1$, cf.\ \cite{Igusa1}.
Its divisor in ${\mathcal A}_4$ is the closure of the locus of Jacobians.
The Schottky form can be described in various ways: 
as the difference of the theta series attached
to the two unimodular lattices $E_8\oplus E_8$ and $D_{16}^+$, but 
also as  the Ikeda lift of $\Delta \in S_{12}(\Gamma_1)$.
For yet other descriptions, we refer to \cite{PoorYuen} top of the page 209.
Since there are no cusp forms of weight $8$ on $\Gamma_2$, its restriction to
$\frak{H}_2\times \frak{H}_2$ vanishes. Using the action of the matrix
$$
\left(\begin{matrix}
a & 0 \\
0 & d \\
\end{matrix}
\right) 
\qquad \text{with $a=d=\left( \begin{matrix} 1_2 & 0 \\ 0 & -1_2\\
\end{matrix}\right) $}
$$
we see that $t_i(J_8)$ in the development (1) vanishes for $i$ odd.
If $V=V'\oplus V^{\prime\prime}$ is the standard representation of ${\GL}_4$
and $V'$ and $V^{\prime\prime}$ are the standard representations of ${\GL}_2$
then we have a decomposition
$$
{\rm Sym}^2(V'\otimes V^{\prime\prime})\cong
{\rm Sym}^2(V') \otimes {\rm Sym}^2(V^{\prime\prime}) \bigxor 
\wedge^2 V' \otimes 
\wedge^2 V^{\prime\prime}\, ,
$$
so starting with a modular form $f$ of weight $k$
that vanishes with order $2$ along
$\frak{H}_2\times \frak{H}_2$ the term $t_2(f)$ gives modular forms in
$$
M_{2,k}(\Gamma_2)\otimes M_{2,k}(\Gamma_2) \qquad \text {and} 
\qquad M_{0,k+1}(\Gamma_2)\otimes M_{0,k+1}(\Gamma_2)
$$
But in the case at hand $k=8$ and the spaces $S_{2,8}(\Gamma_2)$
and $S_{0,9}(\Gamma_2)$ vanish.
Therefore we pass to order $4$. Here we have the identification
of $\Sy^4(V'\otimes V^{\prime\prime})$ with
\begin{align*}
\Sy^4(V')\otimes \Sy^4(V^{\prime\prime}) \bigxor 
(\Sy^2(V')\otimes \wedge^2(V'))\otimes (\Sy^2(V^{\prime\prime})\otimes \wedge^2(V^{\prime\prime}))& \\ 
 \bigxor
 (\wedge^2(V')\otimes \wedge^2(V'))\otimes  
(\wedge^2(V^{\prime\prime})\otimes \wedge^2(V^{\prime\prime}))& \, .
\end{align*}
Thus for a modular form of weight $k$ on $\Gamma_4$
which vanishes at order four along $\frak{H}_2 \times \frak{H}_2$, 
$t_4(f)$ lies in
$$
\Sy^2 M_{4,k}(\Gamma_2)\oplus
\Sy^2M_{2,k+1}(\Gamma_2)\oplus
\Sy^2 M_{0,k+2}(\Gamma_2)
$$

The following Proposition shows that $\Delta$ and its siblings of degree 
$2$ and $3$ occur in the development of the Schottky form.

\begin{proposition}
For the Schottky form $J_8 \in S_8(\Gamma_4)$
of weight $8$ on $\Gamma_4$ the term $t_4(J_8)$ in the restriction from degree $4$
to degree $2+2$ is a non-zero multiple of
$$
\chi_{10}\otimes\chi_{10} \in 
\Sy^2 S_{10}(\Gamma_2)
$$
with $\chi_{10}$ a generator of $S_{10}(\Gamma_2)$, 
while the projection of $t_6(J_8)$ to $\Sy^2 S_{6,8}(\Gamma_2)$ is equal to
a non-zero multiple of
$$  
\chi_{6,8}\otimes \chi_{6,8} \in \Sy^2 S_{6,8}(\Gamma_2)
$$
where $\chi_{6,8}$ is a generator of $S_{6,8}(\Gamma_2)$.
The term $t_4(J_8)$ in the restriction from degree $4$ to degree $3+1$ is a
non-zero multiple of
$$
\chi_{4,0,8} \otimes \Delta
$$
with $\chi_{4,0,8}\in S_{4,0,8}(\Gamma_3)$ a generator.
\end{proposition}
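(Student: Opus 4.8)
The plan is to establish all three assertions by the same scheme: (i) kill the odd‑order Taylor terms of $J_8$ by a sign trick; (ii) use that $J_8$ is a cusp form, together with the vanishing of certain small spaces of cusp forms, to confine the term in question to a one‑dimensional space of tensor products; and (iii) check by an explicit Fourier‑coefficient computation that this term is non‑zero. For step (i), the matrix $\left(\begin{smallmatrix} a & 0\\ 0 & a\end{smallmatrix}\right)$ with $a$ a diagonal $\pm 1$‑matrix with an even number of $-1$'s lies in ${\rm Sp}(8,\ZZ)$, has automorphy factor $\det(a)^8=1$, and changes the sign of the off‑diagonal block $z$; taking $a={\rm diag}(1,1,-1,-1)$ shows $J_8$ is even in $z$ along $\mathfrak{H}_2\times\mathfrak{H}_2$ (as already noted in the text) and $a={\rm diag}(-1,1,1,1)$ does the same along $\mathfrak{H}_1\times\mathfrak{H}_3$, so $t_i(J_8)=0$ for all odd $i$ in both restrictions.

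For the $2+2$ restriction at order $4$: as recalled in the text, $t_0(J_8)=0$ because $S_8(\Gamma_2)=0$, and $t_2(J_8)=0$ because $S_{2,8}(\Gamma_2)=S_{0,9}(\Gamma_2)=0$; with step (i) this makes $t_4(J_8)$ the lowest non‑zero term, hence a section of $L_2^8\boxtimes L_2^8\otimes\Sym^4(\EE_2\boxtimes\EE_2)$, and, $J_8$ being cuspidal, each isotypic projection of it is a cusp form. By the order‑$4$ decomposition displayed in the text these projections lie in $\Sym^2 S_{4,8}(\Gamma_2)$, $\Sym^2 S_{2,9}(\Gamma_2)$ and $\Sym^2 S_{0,10}(\Gamma_2)$; since the first two spaces vanish and $S_{0,10}(\Gamma_2)=S_{10}(\Gamma_2)=\CC\,\chi_{10}$, we obtain $t_4(J_8)=c\,\chi_{10}\otimes\chi_{10}$ for some constant $c$.

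For the $\chi_{6,8}$‑statement, note that $t_6(J_8)$ is no longer the lowest‑order term (since $t_4(J_8)\neq 0$), so I would run the argument of Proposition~\ref{variant3}: extracting the degree‑six part in $z$ of the functional equation $J_8(\gamma\tau)=\rho(\gamma,\tau)J_8(\tau)$ for $\gamma\in\Gamma_2\times\Gamma_2$ and using $t_5(J_8)=0$, one finds $t_6(J_8(\gamma\tau))=\rho_0(\gamma,\tau)\,t_6(J_8(\tau))+\rho_2(\gamma,\tau)\,t_4(J_8(\tau))$, where $\rho_0$ is the automorphy action preserving isotypic components and $\rho_2$ has $\GL_2\times\GL_2$‑type contained in $\Sym^2(\EE_2\boxtimes\EE_2)=\Sym^2\EE_2\boxtimes\Sym^2\EE_2\oplus L_2\boxtimes L_2$. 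As $t_4(J_8)$ has type $L_2^2\boxtimes L_2^2$, the correction $\rho_2\,t_4(J_8)$ has type inside $(\Sym^2\EE_2\otimes L_2^2)^{\boxtimes 2}\oplus(L_2^3)^{\boxtimes 2}$, which does not meet the component $\Sym^6\EE_2\boxtimes\Sym^6\EE_2$; hence the projection of $t_6(J_8)$ onto that component transforms correctly, is cuspidal, and lies in $\Sym^2 S_{6,8}(\Gamma_2)=\CC\,\chi_{6,8}\otimes\chi_{6,8}$. For the $\chi_{4,0,8}$‑statement, the same parity trick together with $S_8(\Gamma_1)=S_{10}(\Gamma_1)=0$ gives $t_0=t_1=t_2=t_3=0$ along $\mathfrak{H}_1\times\mathfrak{H}_3$, so $t_4(J_8)$ is the lowest‑order term; since $\Sym^4(\EE_1\otimes\EE_3)\cong\EE_1^4\otimes\Sym^4(\EE_3)$ is irreducible as a $\GL_1\times\GL_3$‑bundle, no decomposition is needed and the Proposition for $j=1$ applies verbatim, giving a non‑zero cusp form in $S_{12}(\Gamma_1)\otimes S_{4,0,8}(\Gamma_3)$, which by $S_{12}(\Gamma_1)=\CC\,\Delta$ and $\dim S_{4,0,8}(\Gamma_3)=1$ (cf.\ \cite{Taibi}) equals $c''\,\Delta\otimes\chi_{4,0,8}$.

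The one thing the above does not settle — and the main obstacle — is step (iii): that $c\neq 0$, that the $\Sym^6\EE_2\boxtimes\Sym^6\EE_2$‑projection of $t_6(J_8)$ is non‑zero, and that $c''\neq 0$. Everything preceding is formal representation theory once the relevant dimension formulas are quoted; the actual content is to exhibit these non‑vanishings. For this I would use an explicit model of the Schottky form — $J_8$ proportional to $\theta_{E_8\oplus E_8}-\theta_{D_{16}^+}$, or the Ikeda lift of $\Delta$ — compute sufficiently many of its Fourier coefficients, read off the coefficient of the appropriate monomial in the off‑diagonal block $z$ (equivalently, a few Fourier--Jacobi coefficients), and compare it with the known Fourier coefficients of $\chi_{10}$, $\chi_{6,8}$ and $\chi_{4,0,8}$; producing a single non‑zero coefficient in each case suffices. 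I expect this finite but genuinely computational check to be where the real work lies.
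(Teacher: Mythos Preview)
Your proposal is correct and follows essentially the same approach as the paper: kill odd orders by the sign trick, use the vanishing of the small cusp-form spaces $S_{4,8}$, $S_{2,9}$, $S_{4,9}$, $S_{2,10}$, $S_{0,11}$ (and $S_8(\Gamma_1)$, $S_{10}(\Gamma_1)$) to force each relevant term into a one-dimensional space, invoke Proposition~\ref{variant3} for the $(6,0)$-component at order six, and then settle non-vanishing by an explicit Fourier computation. The paper in fact carries out that computation for $t_4$, exhibiting the projector as $\partial^4/\partial\tau_{13}^2\partial\tau_{24}^2+\partial^4/\partial\tau_{14}^2\partial\tau_{23}^2-2\,\partial^4/\partial\tau_{13}\partial\tau_{14}\partial\tau_{23}\partial\tau_{24}$ and obtaining $t_4(J_8)=12\,\chi_{10}\otimes\chi_{10}$, and asserts the analogous checks for $t_6$ and for the $3+1$ restriction; your identification of this computational step as the substantive part is exactly right.
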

\begin{proof} Since $S_{4,8}(\Gamma_2)=S_{2,9}(\Gamma_2)=(0)$ we
see that $t_4(J_8)$ lies in $\Sy^2 S_{10}(\Gamma_2)$. By calculating
the coefficients one sees that it does not vanish.
Indeed, if $(e_1,e_2)$ is a basis of $V_1$ and $(f_1,f_2)$ is a basis of $V_2$
we let $e_i\otimes f_j$ correspond to $\tau_{ij}$. 
The projection of ${\rm Sym}^4(V_1 \otimes V_2)$ on the direct summand
$(\wedge^2 V_1 \otimes \wedge^2 V_1 ) \otimes (\wedge^2 V_2 \otimes \wedge^2 V_2)$
 is then given by
$$
\frac{\partial^4}{\partial \tau_{13}^2\partial \tau_{24}^2}+\frac{\partial^4}{\partial \tau_{14}^2\partial \tau_{23}^2}
-2\frac{\partial^4}{\partial \tau_{13}\partial \tau_{14}\partial \tau_{23}\partial \tau_{24}}
$$
and a direct computation shows that
$$
t_4(J_8)=12\, \chi_{10}\otimes \chi_{10}.
$$

Playing the same game with the term $t_6(J_8)$ we find components in
$$
\Sy^6 (V)= \sum_{\lambda} V^{'}_{\lambda}\otimes V^{\prime\prime}_{\lambda}
$$
where $\lambda$ runs through $(6,0), (4,1), (2,2)$ and $(0,3)$.
By Proposition \ref{variant3}  
all components except the last one can give modular forms.
But the spaces $S_{4,9}(\Gamma_2)$, $S_{2,10}(\Gamma_2)$ and $S_{0,11}(\Gamma_2)$
are zero, therefore we find
$$
t_6(J_8)\in \Sy^2 S_{6,8}(\Gamma_2)
$$
and a calculation shows that this term does not vanish.
The argument for the restriction from degree $4$ to $3+1$ is similar.
\end{proof}
More details on the forms $\chi_{6,8}$ and the form in $S_{4,0,8}(\Gamma_3)$
can be found in sections \ref{TheFormZ} and \ref{TheFormF408}.
\end{section}
\begin{section}{Dimensions of Spaces of Scalar Cusp Forms}
It will be useful to have a table for the dimensions of scalar-valued
Siegel modular cusp forms of weight $4\leq k \leq 18$ and degree $1\leq g \leq 8$.
We will use this table as a heuristic tool that tells us where to look for
modular forms vanishing on $\frak{H}_j \times \frak{H}_{g-j}$. The tables of Ta\"{\i}bi
\cite{Taibi}
provide the dimensions in many cases, though some of his results are
conditional. For $g=2$ the dimensions were determined by Igusa, for $g=3$
by Tsuyumine. For degree $4$ and some higher degrees 
there are results of Poor and Yuen in  \cite{PoorYuen}.

\begin{footnotesize}
\smallskip
\vbox{
\bigskip\centerline{\def\quad{\hskip 0.6em\relax}
\def\quod{\hskip 0.5em\relax }
\vbox{\offinterlineskip
\hrule
\halign{&\vrule#&\strut\quod\hfil#\quad\cr
height2pt&\omit&&\omit && \omit&&\omit &&\omit &&\omit &&\omit && \omit&& \omit  &&\omit &&\omit &&\omit &&\omit &\cr
& $g\backslash k$ && $<$8 && 8 && 9 && 10 && 11 && 12 && 13 && 14 && 15 && 16 && 17 && 18 &\cr
\noalign{\hrule}
& 1 && 0 && 0 && 0 && 0 && 0 && 1 && 0 && 0 && 0 && 1 && 0 && 1 &\cr
& 2 && 0 && 0 && 0 && 1 && 0 && 1  && 0 && 1 && 0 && 2 && 0 && 2 &\cr
& 3 && 0 && 0 && 0 && 0 && 0 && 1  && 0 && 1 && 0 && 3 && 0 && 4 &\cr
& 4 && 0 && 1 && 0 && 1 && 0 && 2  && 0 && 3 && 0 && 7 && 0 && 12 &\cr
& 5 && 0 && 0 && 0 && 0 && 0 && 2  && 0 && 3 && 0 && 13 && 0 && 28 &\cr
& 6 && 0 && 0 && 0 && 1 && 0 && 3  && ? && 9 && 0 && 33 && 0 && 117 &\cr
& 7 && 0 && 0 && 0 && 0 && 0 && 3  && 0 && 9 && 0 && 83 && 0 && ? &\cr
& 8 && 0 && ? && 0 && $\geq 1$ && 0 && $\geq 4$   && $\geq 1$ && $\geq 23$ && $\geq 2$ && $\geq 234$  && ? && ? &\cr
} \hrule}
}}
\end{footnotesize}

\end{section}
\begin{section}{Restricting from Degree Three}
Restriction of forms on $\Gamma_2$ to forms on 
$\Gamma_1 \times \Gamma_1$ does not produce vector-valued 
modular forms, just scalar-valued ones,
though it shows how much the siblings of Delta are related: 
$\chi_{10} \in S_{10}(\Gamma_2)$ vanishes on 
$\frak{H}_1 \times \frak{H}_1$ and its first non-vanishing term, $t_2$, 
gives
$\Delta \otimes \Delta$, while in a similar way Igusa's
cusp form $\chi_{35} \in S_{35}(\Gamma_2)$ produces $\Delta^2e_{12}\otimes
\Delta^2 e_{12}$ with $e_{12}$ the Eisenstein series of weight $12$ 
on $\Gamma_1$.

The rings of scalar-valued Siegel modular forms on $\Gamma_g$  for $g\leq 3$
are  well-known.
(\cite{vdG1,Tsuyumine}).
A generator $\psi_{12}$ of  $S_{12}(\Gamma_3)$ is given by the following
combination of theta series associated to Niemeier lattices (of rank $24$)
$$
\psi_{12}=\frac{1}{1152} (-\vartheta_{\rm Leech}+6\, 
\vartheta_{24 A_1} -8\, \vartheta_{12 A_2}
+3\, \vartheta_{3A_8})\, ,
$$
where we refer for example to \cite{N-V} for notations for lattices.
The  coefficients in the
Fourier expansion $\sum_{N\geq 0} a(N) e^{2\pi i {\rm Tr}(N\tau)}$
of $\psi_{12}$ 
that we need are $a(1_3)=164$, $a(A_1(1/2))\oplus A_2(1/2))=18$ and
$a(A_3(1/2))=1$.
Thus the restriction of  $\psi_{12}$ to
$\frak{H}_1 \times \frak{H}_2$ does not vanish and equals
$24\, \Delta \otimes \chi_{12}$.

The restriction of a cusp form in $S_{14}(\Gamma_3)$ 
to $\frak{H}_1 \times \frak{H}_2$ must vanish.
Let $F$ be the  cusp form of weight $14$ that generates
$S_{14}(\Gamma_{3})$. The form
$F$ was constructed first by Miyawaki (\cite[p.\ 314--315]{Miyawaki})
and later by Ikeda as a lift (\cite{Ikeda}). We recall its construction.
Let

\[
D_{16}^+=
\left\{
x \in \QQ^{16} | \, 2x_i \in \ZZ, x_i-x_j \in \ZZ, x_1+\ldots +x_{16} \in 2\ZZ
\right\}
\]
be the unimodular lattice of rank 16 which is not $E_8 \oplus E_8$.
Let $Q$ be the $3\times 16$ complex matrix
$(1_3,\rho 1_3, \rho^2 1_3, 0)\in {\rm{Mat}}(3\times16,\CC)$,
with $\rho=e^{2 \pi i/3}$.  Then for a triple $(v_1,v_2,v_3)$
of vectors from $D_{16}^{+}$  we get a $3\times 3$ matrix $Q(v_1,v_2,v_3)$
and we define $F$ by
\[
F(\tau)=
\sum_{v_1,v_2,v_3 \in D_{16}^+}
{\rm{Re}}(\det(Q(v_1,v_2,v_3))^6) \,
e^{\pi i \sum_{i,j=1}^3 (v_i,v_j)\tau_{ij}}
\]
for
$
\tau=(\tau_{ij})\in \frak{H}_3
$.

We shall denote the Eisenstein series of weight $k$ for $\Gamma_1$ by $e_k$
and for $\Gamma_g$ with $g\geq 2$ by $E_k$.

\begin{proposition}
The lowest non-zero term in the development of $F$ along
$\frak{H}_2 \otimes \frak{H}_1$ is $t_2(F)$ and it is
a non-zero multiple of
the form $ \chi_{2,14}\otimes \Delta e_4$,  
where $\Delta e_4 \in S_{16}(\Gamma_1)$ and $\chi_{2,14}=
[\chi_{10},E_4]\in S_{2,14}(\Gamma_2)$.
\end{proposition}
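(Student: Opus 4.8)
The plan is to run the conormal development of Section~\ref{restriction} for the partition $3 = 2+1$ and to determine both the order of vanishing of $F$ along $\mathfrak{H}_2 \times \mathfrak{H}_1$ and the shape of its leading term. First I would show that the restriction $t_0(F)$ vanishes: since $F$ is a cusp form, $t_0(F) \in S_{14}(\Gamma_2)\otimes S_{14}(\Gamma_1)$, and $S_{14}(\Gamma_1) = (0)$. Next, conjugating $F$ by the element $\mathrm{diag}(1,1,-1,1,1,-1)\in\Gamma_3$ --- which lies in the image of $\Gamma_2\times\Gamma_1$, fixes $\mathfrak{H}_2\times\mathfrak{H}_1$ pointwise, acts by $-1$ on the normal coordinates $z = (\tau_{13},\tau_{23})$, and has trivial automorphy factor $\det(\mathrm{diag}(1,1,-1))^{14} = 1$ in weight $14$ --- shows that $F$ is even in $z$, so $t_r(F) = 0$ for every odd $r$; in particular $t_1(F) = 0$. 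Hence the first possible non-zero term is $t_2(F)$.

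Granting for the moment that $t_2(F)\neq 0$, the second factor of the partition has dimension $1$, so ${\rm Sym}^2(\EE_2\boxtimes\EE_1)\cong {\rm Sym}^2(\EE_2)\boxtimes\EE_1^{2}$ and, exactly as in Section~\ref{restriction}, the term $t_2(F)$ descends to a section of $({\rm Sym}^2(\EE_2)\otimes L_2^{14})\boxtimes L_1^{16}$ on $\mathcal{A}_2\times\mathcal{A}_1$ (equivalently, it is the leading term furnished by the first Proposition of Section~\ref{restriction}, with $k = 14$ and $r = 2$, giving weights $(2,14)$ on $\Gamma_2$ and $16$ on $\Gamma_1$). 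Reading off the Fourier expansion of $F$, which is supported on positive definite $T$ because $F$ is cuspidal, one sees that the coefficient of each monomial $\tau_{13}^a\tau_{23}^b$ with $a+b = 2$ is a cusp form in $\tau'$ and in $\tau''$ separately; hence by the K\"unneth formula $t_2(F) \in S_{2,14}(\Gamma_2)\otimes S_{16}(\Gamma_1)$. Now $S_{16}(\Gamma_1)$ is spanned by $\Delta e_4$ and, by Tsushima's dimension formula, $S_{2,14}(\Gamma_2)$ is one-dimensional, spanned by the Rankin--Cohen type bracket $\chi_{2,14} = [\chi_{10},E_4]$ (a cusp form of weight $(2,14)$). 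Therefore $t_2(F)$ is automatically a scalar multiple of $\chi_{2,14}\otimes\Delta e_4$, and everything is reduced to the non-vanishing of $t_2(F)$.

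The hard part will be precisely this non-vanishing. I would prove it by an explicit Fourier-coefficient computation: from the theta-series definition of $F$, the coefficient of a chosen monomial $\tau_{13}^a\tau_{23}^b$ ($a+b = 2$) in the expansion along $\mathfrak{H}_2\times\mathfrak{H}_1$ is a finite sum of $\mathrm{Re}(\det Q(v_1,v_2,v_3)^6)$ over triples of vectors in $D_{16}^{+}$ with a prescribed Gram matrix, and evaluating one such coefficient of small norm should give a non-zero value; alternatively one can read off the needed coefficients from the known Fourier expansion of the Ikeda--Miyawaki lift. The remaining points that need a little care are the cuspidality of $t_2(F)$ in the genus-one variable and of $[\chi_{10},E_4]$ (both following from the shape of the relevant Fourier expansions, i.e.\ from the $\Phi$-operator) and the exact value $\dim S_{2,14}(\Gamma_2) = 1$, for which one invokes Tsushima.
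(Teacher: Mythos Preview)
Your proposal is correct and follows essentially the same route as the paper: identify $t_2(F)$ as an element of $S_{2,14}(\Gamma_2)\otimes S_{16}(\Gamma_1)$, use that both factors are one-dimensional (the paper constructs $[\chi_{10},E_4]$ as a generator, you invoke Tsushima), and verify non-vanishing by an explicit Fourier-coefficient check. The only minor differences are that you spell out the vanishing of $t_0$ and $t_1$ (the paper handles $t_0$ just before the proposition via $S_{14}(\Gamma_1)=0$ and leaves $t_1$ implicit; your parity argument with $\mathrm{diag}(1,1,-1,1,1,-1)$ is the same device the paper uses elsewhere for $J_8$), and that the paper carries the Fourier matching one step further to pin down an unknown coefficient of $F$, which is extra information not required for the proposition itself.
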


\begin{proof}
We claim that the term $t_2(F)$ starts as follows
$$
t_2(F)= (2\pi i)^2
\left(
\begin{smallmatrix}
4q_{12}^{-1}+4+2x+4q_{12}\\
4(q_{12}-q_{12}^{-1})\\
4q_{12}^{-1}+4+2x+4q_{12}
\end{smallmatrix}
\right)
q_1q_2q_3+\cdots
$$
where $x$ is the Fourier coefficient of 
$\left(\begin{smallmatrix}
1 & 0 & 0\\ 0 & 1 & 1/2 \\ 0 & 1/2 & 1 \\ \end{smallmatrix}
\right)$ and where $q_j=e^{2 \pi i \tau_j}$ for $j=1,2,3$ and $q_{12}=e^{2 \pi i \tau_{12}}$.
This can be deduced from \cite{Miyawaki} (last table there) 
and the fact that the Fourier coefficients $a(N)$ of $F$ 
satisfy $a(N)=a(U^t N U)$ for $U \in {\rm GL}(3,{\ZZ})$.
We know that $t_2(F)$ lies in 
$S_{2,14}(\Gamma_2) \otimes S_{16}(\Gamma_1)$
and does not vanish. 
We can construct a generator of $S_{2,14}(\Gamma_2)$ 
by the bracket construction, cf.\ \cite{Ibukiyama,C-vdG-G}.
The Fourier expansions of $\chi_{10}$  and the Eisenstein series
$E_4\in M_4(\Gamma_2)$ start as follows:
$$
\chi_{10} (\tau)
=(q_{12}^{-1}-2+q_{12})\, q_1q_2+\cdots
\quad {\rm and} \quad
E_4 (\tau)=
1+240\, (q_1+q_2) +\cdots
$$
so we have
$$
[\chi_{10},E_4]
=
10
\left(
\begin{smallmatrix}
q_{12}^{-1}-2+q_{12}\\
q_{12}-q_{12}^{-1}\\
q_{12}^{-1}-2+q_{12}
\end{smallmatrix}
\right)
q_1q_2+\cdots 
$$
The Fourier expansion of $\Delta e_4$ starts by $\Delta e_4 (\tau_3)=q_3+\ldots$ so we get
\[
[\chi_{10},E_4]
\left(
\begin{smallmatrix}
\tau_1 &  \tau_{12}\\
 \tau_{12} & \tau_2\\
\end{smallmatrix}
\right)
\otimes 
\Delta e_4 (\tau_3)
=
10
\left(
\begin{smallmatrix}
q_{12}^{-1}-2+q_{12}\\
q_{12}-q_{12}^{-1}\\
q_{12}^{-1}-2+q_{12}
\end{smallmatrix}
\right)
q_1q_2q_3+\ldots 
\]
It follows that $x=-6=a
(\left(
\begin{smallmatrix}
1 &  0  & 0 \\
0 & 1 & 1/2\\
0 & 1/2 & 1
\end{smallmatrix}
\right))$ for the unknown Fourier coefficient of $F$.
\end{proof}
In weight $18$ there is a well-known scalar-valued cusp form $\chi_{18}$ 
of degree~$3$
that vanishes along the locus of Jacobians of hyperelliptic curves of
degree~$3$. It is defined as the product of the $36$ even theta
characteristics in degree~$3$. The Fourier expansion of this $\chi_{18}$ 
starts as follows
$$
2^{28} \, (108-60\, (q_{12}^{-1}+q_{12}+q_{13}^{-1}+q_{13}+q_{23}^{-1}+q_{23}+\cdots)
\, q_1^2q_2^2q_3^2 + \cdots)
$$
showing that $\chi_{18}$ vanishes of order $2$ at infinity.
The restriction to $\frak{H}_2 \times \frak{H}_1$ vanishes because
this restriction lies in $S_{18}(\Gamma_2) \otimes S_{18}(\Gamma_1)$,
and because $\chi_{18}$ vanishes twice at infinity the components of its
restriction do so too,
and there is no cusp form of weight $< 24$ on $\Gamma_1$
vanishing twice at infinity. 

\begin{proposition} Along $\frak{H}_2 \times \frak{H}_1$
we have
$$
t_6(\chi_{18})=
c \, \chi_{10} \,  \chi_{6,8} \otimes \Delta^2  \, \in \, S_{6,18}(\Gamma_2)
\otimes S_{24}(\Gamma_1)\, ,
$$
where $c\neq 0$ 
and $\chi_{6,8}$ is a generator of $S_{6,8}(\Gamma_2)$.
\end{proposition}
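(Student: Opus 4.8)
The plan is to determine the possible orders of vanishing of $\chi_{18}$ along $\frak{H}_2\times\frak{H}_1$ and the shape of the leading term by a dimension argument, and then to evaluate that leading term using the description of $\chi_{18}$ as a product of theta constants. For the first part, the element of $\Gamma_3$ which under the modular embedding $\Gamma_2\times\Gamma_1\hookrightarrow\Gamma_3$ of Section~\ref{restriction} is the image of $(-1_4,1_2)$ acts on $\frak{H}_3$ by $\left(\begin{smallmatrix}\tau'&z\\ z^{t}&\tau''\end{smallmatrix}\right)\mapsto\left(\begin{smallmatrix}\tau'&-z\\ -z^{t}&\tau''\end{smallmatrix}\right)$ with trivial automorphy factor, so $\chi_{18}$ is even in $z$ and $t_r(\chi_{18})=0$ for odd $r$. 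Since $\mathrm{Sym}^{r}(\EE_2\boxtimes\EE_1)\cong\mathrm{Sym}^{r}(\EE_2)\otimes\EE_1^{\otimes r}$, the Proposition of Section~\ref{restriction} on restriction along $\frak{H}_1\times\frak{H}_{g-1}$, applied to the splitting $3=2+1$, shows that if $\chi_{18}$ vanishes along $\frak{H}_2\times\frak{H}_1$ to order exactly $r$, then $t_r(\chi_{18})$ is a non-zero pure tensor in $S_{r,18}(\Gamma_2)\otimes S_{18+r}(\Gamma_1)$ (the components of the development of a cusp form being again cusp forms), whose $\Gamma_1$-factor vanishes to order $\geq 2$ at infinity because $\chi_{18}$ does. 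As $S_{18+r}(\Gamma_1)$ contains no non-zero such form for $18+r<24$, while such forms in $S_{24}(\Gamma_1)$ are spanned by $\Delta^{2}$, it follows that $r$ is even with $r\geq 6$ and that $t_6(\chi_{18})=g\otimes\Delta^{2}$ for some $g\in S_{6,18}(\Gamma_2)$ which vanishes to order $\geq 2$ at infinity.

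For the second part I would expand $\chi_{18}=\prod_{m}\vartheta_{m}$, the product over the $36$ even theta characteristics of degree $3$, along $\frak{H}_2\times\frak{H}_1$. Writing $m=(m';\varepsilon)$ with $m'$ of degree $2$ and $\varepsilon$ of degree $1$, the restriction of $\vartheta_{m}$ to $z=0$ equals $\vartheta_{m'}(\tau')\,\vartheta_{\varepsilon}(\tau'')$, which vanishes identically exactly when $\varepsilon$ is odd; by the parity of $m$ this happens for precisely the $6$ characteristics with $m'$ odd, and for the remaining $30$ both $m'$ and $\varepsilon$ are even. Hence $\chi_{18}$ vanishes along $\frak{H}_2\times\frak{H}_1$ to order exactly $6$, and, using that each even $m'$ occurs in $3$ of the $30$ pairs, each even $\varepsilon$ in $10$ of them, and that the linear term in $z$ of $\vartheta_{m}$ for the $m$ with $m'$ odd is a non-zero multiple of $\langle\nabla\vartheta_{m'}(\tau'),z\rangle\,\vartheta'_{\mathrm{odd}}(\tau'')$, one obtains
\[
t_6(\chi_{18})=\Bigl(\prod_{m'\text{ even}}\vartheta_{m'}(\tau')^{3}\;\prod_{m'\text{ odd}}\langle\nabla\vartheta_{m'}(\tau'),z\rangle\Bigr)\;\otimes\;\Bigl(\prod_{\varepsilon\text{ even}}\vartheta_{\varepsilon}(\tau'')^{10}\;\bigl(\vartheta'_{\mathrm{odd}}(\tau'')\bigr)^{6}\Bigr),
\]
where $\nabla\vartheta_{m'}$ denotes the theta-gradient of the odd degree-$2$ characteristic $m'$ and $\vartheta'_{\mathrm{odd}}(\tau'')$ the derivative at the origin of the odd theta function of degree $1$.

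It remains to recognise the two tensor factors. On the $\tau''$-side, the classical identities $\vartheta_{2}\vartheta_{3}\vartheta_{4}=2\eta^{3}$ and $\eta^{24}=\Delta$, together with Jacobi's derivative formula expressing $\vartheta'_{\mathrm{odd}}$ as a non-zero multiple of $\vartheta_{2}\vartheta_{3}\vartheta_{4}$, show that $\prod_{\varepsilon\text{ even}}\vartheta_{\varepsilon}(\tau'')^{10}\,\bigl(\vartheta'_{\mathrm{odd}}(\tau'')\bigr)^{6}$ is a non-zero multiple of $\eta(\tau'')^{48}=\Delta(\tau'')^{2}$, in agreement with the first part. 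On the $\tau'$-side I would group the factor as $\bigl(\prod_{m'\text{ even}}\vartheta_{m'}\bigr)^{2}\cdot\bigl(\prod_{m'\text{ even}}\vartheta_{m'}\;\prod_{m'\text{ odd}}\langle\nabla\vartheta_{m'},z\rangle\bigr)$ and use that $\prod_{m'\text{ even}}\vartheta_{m'}^{2}$ is a non-zero multiple of $\chi_{10}$ (Igusa), while the remaining bracket is a non-zero multiple of $\chi_{6,8}$; this last identification is exactly the theta-gradient construction of $\chi_{6,8}$, see Section~\ref{TheFormZ}. Collecting the pieces yields $t_6(\chi_{18})=c\,\chi_{10}\chi_{6,8}\otimes\Delta^{2}$ with $c\neq0$, since all the constants involved are non-zero; comparison with the first part also re-confirms that the $\Gamma_1$-factor is $\Delta^{2}$.

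The main obstacle is the bookkeeping in the last step: verifying that the weights and multiplier systems of the various theta products combine correctly so that one genuinely lands in $S_{6,18}(\Gamma_2)\otimes S_{24}(\Gamma_1)$, and identifying $\prod_{m'\text{ even}}\vartheta_{m'}\cdot\prod_{m'\text{ odd}}\langle\nabla\vartheta_{m'},z\rangle$ with a non-zero multiple of $\chi_{6,8}$ — which is precisely the content of the theta-gradient description of $\chi_{6,8}$ and is the one place where a careful, if elementary, computation is unavoidable. Alternatively, granting the first part one may finish by observing that the subspace of $S_{6,18}(\Gamma_2)$ of forms vanishing to order $\geq 2$ at infinity is one-dimensional and spanned by $\chi_{10}\chi_{6,8}$ (from the dimension formula for $S_{6,18}(\Gamma_2)$, using that $\chi_{10}$ and $\chi_{6,8}$ each vanish only to order $1$ at infinity so that their product vanishes to order $2$), and then by evaluating a single non-zero Fourier coefficient of $t_6(\chi_{18})$ to fix the scalar $c$; but reading such a coefficient off the $36$-fold theta product entails a comparable amount of computation.
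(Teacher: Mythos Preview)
Your argument is correct and takes a genuinely different route from the paper's proof. Both proofs share the same first step (the vanishing-order argument via the absence of cusp forms in $S_{18+r}(\Gamma_1)$ with a double zero at the cusp for $r<6$). The paper then proceeds computationally: it evaluates the vector of sixth partial derivatives of $\chi_{18}$ at the leading Fourier term, observes it is non-zero, notes that substituting $q_{12}=1$ annihilates it (so the $\Gamma_2$-factor vanishes on $\frak{H}_1\times\frak{H}_1$ and hence is divisible by $\chi_{10}$), and finishes with $\dim S_{6,8}(\Gamma_2)=1$. You instead exploit the product description $\chi_{18}=\prod_m\vartheta_m$ directly: splitting each degree-$3$ characteristic as $(m';\varepsilon)$, you identify the six vanishing factors, compute their linear terms via the Jacobi-variable expansion, and assemble $t_6$ as $\bigl(\chi_5^3\cdot\prod_{m'\text{ odd}}\langle\nabla\vartheta_{m'},z\rangle\bigr)\otimes\bigl((\vartheta_2\vartheta_3\vartheta_4)^{10}(\vartheta'_1)^6\bigr)$. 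Recognising the first bracket as $\chi_{10}\cdot\bigl(\chi_5\cdot\mathrm{Sym}^6(G_1,\ldots,G_6)\bigr)=c\,\chi_{10}\chi_{6,8}$ via the very construction in Section~\ref{TheFormZ}, and the second as a multiple of $\eta^{48}=\Delta^2$ via Jacobi's derivative formula, finishes the proof without any Fourier-coefficient calculation.

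What each approach buys: the paper's proof is self-contained within the restriction formalism and needs only the dimension $\dim S_{6,8}(\Gamma_2)=1$ plus one Fourier coefficient; yours is more structural, explains \emph{why} the factor $\chi_{10}\chi_{6,8}$ appears (it is forced by the theta product, not merely by a dimension count), and simultaneously re-derives the exact vanishing order $6$ along $\frak{H}_2\times\frak{H}_1$ rather than bounding it from below. Your alternative final paragraph essentially recovers the paper's strategy, though phrased in terms of order of vanishing at infinity rather than vanishing along $\mathcal{A}_{1,1}$; note that those are a priori different conditions, and the paper's version (divisibility by $\chi_{10}$ via vanishing on $\frak{H}_1\times\frak{H}_1$) is the cleaner one. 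One small imprecision: you invoke Proposition~2.1 as if it guarantees a pure tensor, but a section of $L_1^{18+r}\boxtimes(\mathrm{Sym}^r\EE_2\otimes L_2^{18})$ is only a pure tensor once you know the relevant $\Gamma_1$-space is one-dimensional, which you do establish immediately afterwards.
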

\begin{proof} From the fact that there is no cusp form
of weight less than $24$ on $\Gamma_1$ that vanishes twice at the cusp 
it follows that $t_i(\chi_{18})$ 
vanishes for $i\leq 5$.
We know that the $t_6(\chi_{18})$ lies in
$S_{6,18}(\Gamma_2)\otimes S_{24}(\Gamma_1)$.
But the subspace of $S_{24}(\Gamma_1)$ of elements vanishing twice at
infinity is generated by $\Delta^2$. Moreover, the calculation
\[
\left(
\begin{smallmatrix}
\frac{\partial^6 \chi_{18}}{\partial \tau_{13}^6}\\
6\frac{\partial^6 \chi_{18}}{\partial \tau_{13}^5\partial\tau_{23}}\\
\vdots\\
6\frac{\partial^6 \chi_{18}}{\partial \tau_{13}\partial\tau_{23}^5}\\
\frac{\partial^6 \chi_{18}}{\partial \tau_{23}^6}
\end{smallmatrix}
\right)
\left(
\begin{smallmatrix}
\tau_1 &  \tau_{12}  & 0 \\
 \tau_{12} & \tau_2 & 0\\
0 & 0 & \tau_{3}
\end{smallmatrix}
\right)
=
\left(
\begin{smallmatrix}
0 \\
0 \\
q_{12}^{-2}-4q_{12}^{-1}+6-4q_{12}+q_{12}^2\\
-2q_{12}^{-2}+4q_{12}^{-1}-4q_{12}+2q_{12}^2\\
q_{12}^{-2}-4q_{12}^{-1}+6-4q_{12}+q_{12}^2\\
0 \\
0 \\
\end{smallmatrix}
\right)
q_1^2q_2^2q_3^2+\ldots 
\]
shows that it does not vanish and is divisible by $\chi_{10}$ because
substitution of $q_{12}=1$ gives zero. Since $\dim S_{6,8}(\Gamma_2)=1$
the result follows.
\end{proof}
\end{section}
\begin{section}{Restricting from Degree $4$}
We begin by listing the modular forms that we are going to restrict.
As before, we denote by $e_{k}$ the Eisenstein series of weight $k$ 
on $\Gamma_1$ and by $E_{k}$ the Eisenstein series of weight $k$ 
in higher genera, always normalized such that their
Fourier expansion starts with~$1$.

Besides the Schottky form $J_8$ that generates $S_8(\Gamma_4)$ we
have the generator
$F_{10}= -I_4(e_4 \Delta)/168  \, \in S_{10}(\Gamma_4)$
where $I_4$ is the Ikeda lift $I_4:S_{16}(\Gamma_1) \to S_{10}(\Gamma_4)$,
the two generators
$ G_1=I_4(e_4^2 \Delta)/360 \quad \text{and} \quad G_2= -J_8 E_4/2 $
of $S_{12}(\Gamma_4)$,
the three Hecke eigenforms $H_1,H_2,H_3$ that generate $S_{14}(\Gamma_4)$,
see \cite[p.\ 213]{PoorYuen} (but note that the eigenvalues given there 
are not correct; in fact, the expressions of $f_7$ and $f_8$ on page 214
and the eigenvalues for $J_8$ in table 2 on page 218 are incorrect),
where
$$
H_1=I_4(\Delta e_4^3- (156-12 \alpha) \Delta^2), \quad \text{and} \quad 
H_2=I_4(\Delta e_4^3- (156+12 \alpha) \Delta^2)
$$
with $\alpha=\sqrt{144169}$. 
One can calculate the first Fourier coefficients. 
We give the results in a table.
The coefficients of $H_2$ are the conjugates of those of $H_1$. The space 
$S_{14}(\Gamma_4)$ contains $E_6 J_8$.

    Note that the paper \cite{Keaton} gives a closed formula for the
eigenvalues of an Ikeda lift.
\begin{footnotesize}
\smallskip
\vbox{
\bigskip\centerline{\def\quad{\hskip 0.6em\relax}
\def\quod{\hskip 0.5em\relax }
\vbox{\offinterlineskip
\hrule
\halign{&\vrule#&\strut\quod\hfil#\quad\cr
height2pt&\omit&&\omit &&\omit &&\omit &&\omit &&\omit &&\omit  &\cr
& &&
$J_8$ && $F_{10}$ && $G_{1}$ && $G_{2}$ && $H_{1}$ && $H_{3}$  &\cr
\noalign{\hrule}
& $1_4$ &&
$40$ && $472$  && $-4440$ && $-40$ && $-434984-968\alpha$   && $-2080$  &\cr
& $2A_1(1/2)\oplus A_2(1/2)$ &&
$-12$ && $-36$  && $-492$ && $12$ && $63132+204\alpha$  && $288$  &\cr
& $2A_2(1/2)$ &&
$6$ && $72$  && $-78$ && $-6$ && $-44904-48\alpha$ && $-198$  &\cr
& $A_1(1/2)\oplus A_3(1/2)$ &&
$2$ && $-22$  && $-38$ && $-2$ && $4994-22\alpha$ && $28$  &\cr
& $A_4(1/2)$ &&
$-1$ && $2$  && $1$ && $1$ && $-274+2\alpha$ && $-5$  &\cr
& $D_4(1/2)$ &&
$1$ && $1$  && $-3$ && $-1$ && $-467+\alpha$ && $2$  &\cr
} \hrule}
}}
\end{footnotesize}

According to \cite{B-F-vdG} we should
have $\dim S_{4,0,8}(\Gamma_3)=1$,
$\dim S_{2,0,10}(\Gamma_3)=1$ and $\dim S_{2,0,14}(\Gamma_3)=2$. 
We denote the generating eigenforms by $\chi_{4,0,8}$, $\chi_{2,0,10}$,
and $\chi_{2,0,14}$ and $\chi_{2,0,14}^{\prime}$. The form
$\chi_{2,0,14}$ and its conjugate $\chi_{2,0,14}^{\prime}$ are lifts
with Hecke eigenvalues of the form
$$
a(p)(p^{11}+b(p)+p^{12})
$$
with $a(p)$ the eigenvalue of the eigenform of $S_{16}(\Gamma_1)$
and $b(p)$ the eigenvalue of a Hecke eigenform in $S_{24}(\Gamma_1)$.

\begin{proposition}
By restricting scalar-valued modular cusp forms of degree $4$ and small
weight to $\frak{H}_3 \times \frak{H}_1$ 
we find (a non-zero multiple of) 
the vector-valued modular forms as in the table below where
$\chi_{0,0,12}=\psi_{12}$.
\end{proposition}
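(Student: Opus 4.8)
The plan is to read the restriction along $\frak{H}_3\times\frak{H}_1$ as the case $g=4$, $j=1$ of the first Proposition of Section~\ref{restriction}. Since one of the two factors of the conormal bundle $\EE_1\boxtimes\EE_3$ is a line bundle, the natural map ${\rm Sym}^r(\EE_1\otimes\EE_3)\to\EE_1^r\otimes{\rm Sym}^r(\EE_3)$ is an isomorphism, so for a cusp form $f\in S_k(\Gamma_4)$ no isotypic projection is needed: the term $t_r(f)$ of the Taylor development of $f$ along $\frak{H}_1\times\frak{H}_3$ lies directly in $S_{(r,0,k)}(\Gamma_3)\otimes S_{k+r}(\Gamma_1)$ (cusp forms, since $f$ is). Two observations then cut down the possibilities. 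First, exactly as for $J_8$ above, the block-diagonal element of $\Gamma_4$ with diagonal blocks ${\rm diag}(1_3,-1)$ sends the off-diagonal block $z$ to $-z$ and has automorphy factor $(-1)^k=1$ for the (even) weights at hand, so $t_r(f)=0$ for $r$ odd. Second, $t_r(f)$ vanishes unless $S_{k+r}(\Gamma_1)\neq(0)$; since $S_m(\Gamma_1)=(0)$ for $m\le 11$ and for $m=14$, the development starts at $r=4$ for $k=8$, at $r=2$ for $k=10$ and $k=14$, and at $r=0$ for $k=12$ (where the restriction need not vanish at all).

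For the three forms $J_8$, $F_{10}$, $G_1$ this does almost everything: $t_4(J_8)\in S_{4,0,8}(\Gamma_3)\otimes\langle\Delta\rangle$, $t_2(F_{10})\in S_{2,0,10}(\Gamma_3)\otimes\langle\Delta\rangle$, and $t_0(G_1)\in S_{12}(\Gamma_3)\otimes S_{12}(\Gamma_1)=\langle\psi_{12}\rangle\otimes\langle\Delta\rangle$. In each case it only remains to check that the term does not vanish, which is done as in the Schottky computation above, by a direct computation with the first few Fourier coefficients of $f$, available from the table above (and, for $\psi_{12}$, from the coefficients recalled earlier). For $G_2=-J_8E_4/2$ and $E_6J_8$ I would use the Leibniz rule $t_r(fg)=\sum_{i+j=r}t_i(f)\,t_j(g)$: the restriction of $E_\ell^{(4)}$ to $\frak{H}_3\times\frak{H}_1$ has non-zero order-$0$ term, namely $E_\ell^{(3)}\otimes e_\ell$ (because $M_\ell(\Gamma_3)$ and $M_\ell(\Gamma_1)$ are one-dimensional for $\ell=4,6$), while $J_8$ vanishes to order exactly $4$; hence $G_2$ and $E_6J_8$ vanish to order exactly $4$, and $t_4$ equals $t_4(J_8)\cdot t_0(E_\ell)$, a non-zero multiple of $\chi_{4,0,8}E_\ell^{(3)}\otimes\Delta e_\ell$ in $S_{4,0,12}(\Gamma_3)\otimes S_{16}(\Gamma_1)$, respectively $S_{4,0,14}(\Gamma_3)\otimes S_{18}(\Gamma_1)$.

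The main obstacle is the weight-$14$ Hecke eigenforms $H_1,H_2,H_3$, for which $t_2(H_i)\in S_{2,0,14}(\Gamma_3)\otimes\langle\Delta e_4\rangle$ with the first factor two-dimensional, so mere non-vanishing does not single out the $\Gamma_3$-component. I would identify it through the lift structure recalled above: $\chi_{2,0,14}$ and its conjugate $\chi_{2,0,14}^{\prime}$ have Hecke eigenvalues $a(p)(p^{11}+b(p)+p^{12})$, with $a(p)$ the eigenvalue of the normalized form of $S_{16}(\Gamma_1)$ and $b(p)$ running over the two Hecke eigenforms of $S_{24}(\Gamma_1)$, while $H_1=I_4(\Delta e_4^3-(156-12\alpha)\Delta^2)$ and $H_2=I_4(\Delta e_4^3-(156+12\alpha)\Delta^2)$ are Ikeda lifts whose eigenvalues are given by the closed formula of \cite{Keaton}; comparing these (or, failing that, computing a few more Fourier coefficients of $t_2(H_i)$ and matching them against the known expansions of $\chi_{2,0,14}$ and $\chi_{2,0,14}^{\prime}$) shows that $t_2(H_1)$ and $t_2(H_2)$ are non-zero multiples of $\chi_{2,0,14}\otimes\Delta e_4$ and $\chi_{2,0,14}^{\prime}\otimes\Delta e_4$. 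This fits with the $E_6J_8$-row: since $E_6J_8\in S_{14}(\Gamma_4)=\langle H_1,H_2,H_3\rangle$ has $t_2=0$, it spans the kernel of $f\mapsto t_2(f)$ on $S_{14}(\Gamma_4)$, a subspace of codimension $\dim S_{2,0,14}(\Gamma_3)=2$, and its next term is the order-$4$ term computed above.

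Finally, identifying the $\Gamma_3$-factors with the named generators $\chi_{4,0,8}$, $\chi_{2,0,10}$, $\chi_{0,0,12}=\psi_{12}$ in the one-dimensional cases uses the predicted dimensions of \cite{B-F-vdG}; the non-vanishing computations at any rate show those dimensions are at least one, matching the predictions. Carrying out this analysis for $J_8,F_{10},G_1,G_2,H_1,H_2,H_3,E_6J_8$ produces exactly the entries of the table.
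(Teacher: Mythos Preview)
Your argument is sound and follows precisely the method the paper uses; the paper states this table without a separate proof, so its implicit argument is the one from Section~\ref{restriction} together with direct Fourier-coefficient checks, which is exactly what you outline. One point deserves mention: your Leibniz-rule computation for $G_2=-J_8E_4/2$ yields vanishing order $r=4$ along $\frak{H}_3\times\frak{H}_1$ with lowest term a non-zero multiple of $E_4^{(3)}\chi_{4,0,8}\otimes e_4\Delta$, whereas the printed table has $r=2$ and $E_4\chi_{6,8}\otimes e_4\Delta$. Your conclusion is the correct one: since $t_i(J_8)=0$ for $i<4$ (as you showed, forced by $S_{10}(\Gamma_1)=0$) and $t_0(E_4)\neq 0$, the product $J_8E_4$ satisfies $t_i=0$ for $i<4$ and $t_4\neq 0$; moreover $\chi_{6,8}$ is a form on $\Gamma_2$, so it cannot appear as the degree-$3$ factor. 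The table entry for $G_2$ is evidently a misprint for $r=4$, $E_4\chi_{4,0,8}\otimes e_4\Delta$, parallel to the $E_6J_8$ row, and your argument establishes the corrected entry.
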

\begin{footnotesize}
\smallskip
\vbox{
\bigskip\centerline{\def\quad{\hskip 0.6em\relax}
\def\quod{\hskip 0.5em\relax }
\vbox{\offinterlineskip
\hrule
\halign{&\vrule#&\strut\quod\hfil#\quad\cr
height2pt&\omit &&\omit &&\omit &&\omit &&\omit  &\cr
& $k$ && $\dim S_k(\Gamma_4)$ && form  && $r$ && $t_r$   &\cr
\noalign{\hrule}
& $8$ && $1$  && $J_8$ && $4$ && $\chi_{4,0,8}\otimes \Delta$ &\cr
& $10$ && $1$  && $F_{10}$ && $2$ && $\chi_{2,0,10}\otimes \Delta$  &\cr
& $12$ && $2$  && $G_1$ && $0$ && $\chi_{0,0,12}\otimes \Delta$ &\cr
&  &&   && $G_2$ && $2$ && $E_4 \chi_{6,8} \otimes e_4 \Delta$ &\cr
& $14$ && $3$  && $H_1$ && $2$ && $\chi_{2,0,14} \otimes e_4\Delta$ &\cr
& &&   && $H_2$ && $2$ &&  $\chi_{2,0,14}^{\prime} \otimes e_4\Delta$ &\cr
&  &&   && $E_6J_8$ && $4$ && $E_6 \chi_{4,0,8} \otimes e_6 \Delta$ &\cr
} \hrule}
}}
\end{footnotesize}

\begin{proposition}
By restricting scalar-valued modular cusp forms of degree $4$ and small
weight to $\frak{H}_2\times \frak{H}_2$
we find the vector-valued modular forms as in the table below.
\end{proposition}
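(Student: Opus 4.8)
The plan is to follow, form by form, the argument already used above for the Schottky form. Let $f\in M_k(\Gamma_4)$ be one of the listed cusp forms. First I would locate its order of vanishing along $\frak{H}_2\times\frak{H}_2$: the restriction $t_0(f)$ lies in $S_k(\Gamma_2)\otimes S_k(\Gamma_2)$, so it is automatically zero when $S_k(\Gamma_2)=0$ — among the weights occurring here this happens only for $k=8$ — while for the remaining weights ($k=10,12,14$, where $\dim S_k(\Gamma_2)=1$) one checks on a few Fourier coefficients whether $t_0(f)$ is already non-zero. Whenever one has to pass to a higher-order term, the matrix $\left(\begin{smallmatrix} a & 0\\ 0 & d\end{smallmatrix}\right)$ with $a=d=\left(\begin{smallmatrix} 1_2 & 0\\ 0 & -1_2\end{smallmatrix}\right)$ annihilates $t_i(f)$ for every odd $i$, exactly as for $J_8$, so the first non-zero term has even order $2m$. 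Since $g=2j=4$, the map ${\mathcal A}_2\times{\mathcal A}_2\to{\mathcal A}_4$ has degree $2$ and the development is symmetric; hence $t_{2m}(f)$ is a section of $L_2^k\boxtimes L_2^k\otimes{\rm Sym}^{2m}(N^{\vee})$ with $N^{\vee}=\EE_2\boxtimes\EE_2$, and by this symmetry it lies in $\bigoplus_\lambda{\rm Sym}^2 M_{{\bf k}_\lambda}(\Gamma_2)$, where $\lambda=(a_1\ge a_2)$ runs through the partitions of $2m$ into at most two parts and ${\bf k}_\lambda=(a_1-a_2,\ a_2+k)$; for $2m=2,4,6$ this reproduces the decompositions already written out in the Schottky example. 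One then discards every $\lambda$ for which $S_{{\bf k}_\lambda}(\Gamma_2)=0$, using the dimension table above, and for the surviving $\lambda$ appeals to Proposition~\ref{variant3}, which both guarantees that the corresponding isotypic component can actually carry a modular form and produces the vector-valued forms sitting beyond the lowest non-zero term (such as $\chi_{6,8}$ from the ${\rm Sym}^6$-slot, exactly as for $t_6(J_8)$ above).

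In every entry of the table this leaves a single surviving space $S_{{\bf k}_\lambda}(\Gamma_2)$, which in all the cases at hand is one-dimensional and spanned by the form $\phi$ named there — one of the degree-$2$ siblings of $\Delta$ ($\chi_{10}$, $\chi_{12}$, $\chi_{6,8}$, \ldots), possibly multiplied by an Eisenstein series. Hence $t_{2m}(f)$ is forced, up to the symmetry of the two factors, to be a scalar multiple of $\phi\otimes\phi$. For the product forms $G_2=-J_8E_4/2$ and $E_6J_8$ I would shortcut this with multiplicativity of the Taylor development, $t_4(J_8\,h)=t_4(J_8)\cdot\bigl(h|_{\frak{H}_2\times\frak{H}_2}\bigr)$, together with the fact that the restriction of a genus-$4$ Eisenstein series $E_\ell$ to $\frak{H}_2\times\frak{H}_2$ is the tensor product of two copies of the genus-$2$ weight-$\ell$ Eisenstein series (read off from the constant term, since $\dim M_\ell(\Gamma_2)=1$ for $\ell=4,6$): these rows then reduce to the identity $t_4(J_8)=12\,\chi_{10}\otimes\chi_{10}$ multiplied by an explicit Eisenstein factor, and the same device handles the ${\rm Sym}^6$-projection for $E_6J_8$. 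For the Ikeda lifts $F_{10}$ and $G_1$, where $t_0(f)\ne0$, the restriction lies in $S_k(\Gamma_2)\otimes S_k(\Gamma_2)$ with $\dim S_k(\Gamma_2)=1$ and so is a non-zero multiple of the square of the generator.

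The main remaining work — and the main obstacle — is to verify that each scalar multiple is non-zero and to compute its value. For this I would apply to the first Fourier coefficients of $f$ the constant-coefficient differential operator realising the projection of ${\rm Sym}^{2m}(V'\otimes V'')$ onto the surviving isotypic summand, of exactly the shape displayed above for $2m=4$ and the $\wedge^2V'\otimes\wedge^2V'$ component (with the analogous operators for the ${\rm Sym}^2$- and ${\rm Sym}^6$-slots); the coefficients needed come from the tables above, from the Ikeda-lift descriptions, and from the expansion of $E_6$. The most delicate case is that of the weight-$14$ Hecke eigenforms $H_1,H_2,H_3$: their restrictions to $\frak{H}_2\times\frak{H}_2$ need not vanish to the same order, and when a restriction first appears at order $2$ it lands in the one-dimensional space $S_{2,14}(\Gamma_2)=\CC\cdot\chi_{2,14}$ with $\chi_{2,14}=[\chi_{10},E_4]$ vector-valued, so one must work with the vector-valued Fourier expansion and match Hecke eigenvalues to decide which eigenform produces which term $t_r$, in the same spirit as the treatment of the degree-$3$ form $F$ above. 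Throughout, one must be scrupulous about the normalisations of $J_8,F_{10},G_1,G_2,H_1,H_2,H_3$, using in particular the corrected Fourier coefficients and Hecke eigenvalues flagged in the text.
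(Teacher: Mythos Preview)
Your overall strategy is the paper's own (implicit) method: parity plus dimension counts to locate the vanishing order, the isotypic decomposition of ${\rm Sym}^{2m}(V'\otimes V'')$, Proposition~\ref{variant3} to harvest components beyond the lowest non-zero term, multiplicativity for the Eisenstein products, and a final Fourier-coefficient check for non-vanishing.

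There are, however, two concrete slips in matching your argument to the table. First, for $J_8$, $G_2=-J_8E_4/2$ and $E_6J_8$ the actual order of vanishing along $\frak{H}_2\times\frak{H}_2$ is $4$, not $6$: indeed $t_4(J_8)=12\,\chi_{10}\otimes\chi_{10}\neq 0$, and by your own multiplicativity shortcut $t_4(G_2)$ and $t_4(E_6J_8)$ are non-zero multiples of $E_4\chi_{10}\otimes E_4\chi_{10}$ and $E_6\chi_{10}\otimes E_6\chi_{10}$. The $r=6$ entries therefore do not record the first non-zero Taylor term but rather the ${\rm Sym}^6$-isotypic projection of $t_6$, extracted via Proposition~\ref{variant3}. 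You acknowledge this for $E_6J_8$ (``the same device handles the ${\rm Sym}^6$-projection''), but your treatment of $G_2$ stops at $t_4$ and so never reaches the table's assertion $E_4\chi_{6,8}\otimes E_4\chi_{6,8}$. Second, the claim that ``in every entry of the table this leaves a single surviving space'' is false for $E_6J_8$: the entry is a sum of two tensors, $E_6\chi_{10}$ of weight $(0,16)$ coming from the $(\wedge^2)^2$-slot at order~$4$ and $E_6\chi_{6,8}$ of weight $(6,14)$ from the ${\rm Sym}^6$-slot at order~$6$. A minor related point: the weight-$14$ rows use the basis $H_1,\ H_1-H_2,\ E_6J_8$ rather than the eigenbasis $H_1,H_2,H_3$; the particular combination $H_1-H_2$ is singled out because a Fourier-coefficient computation shows its $t_0$ vanishes, which is what makes the $t_2$-term (and hence $\chi_{2,14}\otimes\chi_{2,14}$) accessible.
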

\begin{footnotesize}
\smallskip
\vbox{
\bigskip\centerline{\def\quad{\hskip 0.6em\relax}
\def\quod{\hskip 0.5em\relax }
\vbox{\offinterlineskip
\hrule
\halign{&\vrule#&\strut\quod\hfil#\quad\cr
height2pt&\omit &&\omit &&\omit &&\omit &&\omit  &\cr
& $k$ && $\dim S_k(\Gamma_4)$ && form  && $r$ && $t_r$   &\cr
\noalign{\hrule}
& $8$ && $1$  && $J_8$ && $6$&& $\chi_{6,8} \otimes \chi_{6,8}$ &\cr
& $10$ && $1$  && $F_{10}$ && $0$&& $\chi_{10}\otimes \chi_{10}$  &\cr
& $12$ && $2$  && $G_1$ && $0$&& $\chi_{12}\otimes \chi_{12}$ &\cr
&  &&   && $G_2$ && $6$&& $E_4 \chi_{6,8}\otimes E_4 \chi_{6,8}$ &\cr
& $14$ && $3$  && $H_1$ && $0$&& $E_4\chi_{10} \otimes E_4 \chi_{10}$ &\cr
& &&   && $H_1-H_2$ && $2$&&  $\chi_{2,14}\otimes \chi_{2,14}$ &\cr
&  &&   && $E_6J_8$ && $6$&& $E_6\chi_{6,8}\otimes E_6 \chi_{6,8}+ E_6\chi_{10}\otimes E_6 \chi_{10}$ &\cr
} \hrule}
}}
\end{footnotesize}

Note that if $f$ is a form in $S_k(\Gamma_4)$ 
such that along $\frak{H}_2\times \frak{H}_2$ we have 
that $t_r(f)=0$ for $r<6$ then $t_6(f)$ lies in 
$$
{\rm Sym}^2(S_{6,k}(\Gamma_2)) \oplus  {\rm Sym}^2(S_{4,k+1}(\Gamma_2)) 
\oplus {\rm Sym}^{2}(S_{2, k+2}(\Gamma_2)) \oplus
{\rm Sym}^2(S_{0,k+3}(\Gamma_2))\, .
$$

We can deduce the Fourier expansion of these forms. For example, 
for the Fourier expansion of $\chi_{2,0,10}/12 (2\pi i)^2$ 
is given by

$$
\big[
\left(
\begin{smallmatrix}
-20 \\ 0 \\ 0 \\ -20 \\ 0 \\ -20
\end{smallmatrix}
\right)
+
\left(
\begin{smallmatrix}
-6 \\ -6 \\ 0 \\ -6 \\ 0 \\ 14
\end{smallmatrix}
\right)
q_{12}+
\left(
\begin{smallmatrix}
-6 \\ 6 \\ 0 \\ -6 \\ 0 \\ 14
\end{smallmatrix}
\right)
q_{12}^{-1}
+
\left(
\begin{smallmatrix}
-6 \\ 0 \\ -6 \\ 14 \\ 0 \\ -6
\end{smallmatrix}
\right)
q_{13}
+\ldots+
\left(
\begin{smallmatrix}
1 \\ 1 \\ 1 \\ 1 \\ 0 \\ 1
\end{smallmatrix}
\right)
q_{12}q_{13}+
\ldots
\big]
q_1q_2q_3
+\ldots
$$
and that of $\chi_{2,0,14}/12 (2\pi i)^2)$ is given by

$$
\big[
\left(
\begin{smallmatrix}
13540+20\alpha \\ 0 \\ 0 \\ 13540+20\alpha \\ 0 \\ 13540+20\alpha
\end{smallmatrix}
\right)
+
\left( 
\begin{smallmatrix}
-6\,\alpha+1482\\ 
-6\,\alpha+1482\\ 
0\\ 
-6\,\alpha+1482\\ 
0\\ 
-6\,\alpha-7758
\end {smallmatrix}
\right)
q_{12}
+\ldots+
\left(
\begin{smallmatrix}
\alpha -247 \\ \alpha -247 \\ \alpha -247 \\ \alpha -247 \\ 0 \\ \alpha -247
\end{smallmatrix}
\right)
q_{12}q_{13}+
\ldots
\big] \,
q_1q_2q_3
+\ldots
$$
\end{section}

\begin{section}{Restriction from Degree Six and Eight}
We begin by constructing a form of weight $10$ both in degree $8$ and degree $6$.
The Ikeda lift of $\Delta \in S_{12}(\Gamma_1)$ to degree $8$ gives a form $I_8(\Delta)$ in $S_{10}(\Gamma_8)$. For our purpose we need a number of Fourier coefficients
of $I_8(\Delta)=\sum_T b(T) e^{2 \pi i {\rm tr}(T\cdot \tau)}$; 
in fact, we need these for all positive definite half-integral
symmetric matrices with diagonal equal to $(1,\ldots,1)$. For a
positive definite half-integral
symmetric matrix $T$ with fundamental discriminant $D_T$ the Fourier coefficient
is given by $b(T)=c(|D_T|)=c(\det (2T))$, where 
$$
h=\sum_{n\geq 1, n\equiv 0,1 \, \bmod 4}  c(n) q^n \in S_{13/2}^{+}(\Gamma_0(4))
$$
is the form of half-integral weight corresponding to $\Delta$ under the Shimura
correspondence. By restricting this form $I_8(\Delta)$ to $\frak{H}_7\times \frak{H}_1$ we find linear relations between the Fourier coefficients and this gives a way
of calculating further Fourier coefficients. Indeed, the restriction 
$r_{7,1}(I_8(\Delta))$ is zero
in view of dim $S_{10}(\Gamma_7)=\dim S_{10}(\Gamma_1)=0$ and 
we thus find that the Fourier coefficient at $N\otimes A_1(1/2))$ of 
$r_{7,1}(I_8(\Delta))$ is given by 
$$
\sum_{N\oplus A_1(1/2)=T} b(I_8(\Delta),T),
$$
where the sum runs over all positive definite $T$ with upper left block $N$ and lower
right block $A_1(1/2)$. This gives a relation $b(I_8(\Delta),A_7(1/2)\oplus A_1(1/2)
) + 56 b(I_8(\Delta),E_8(1/2))=0$. In this way we obtain a number of relations
between the Fourier coefficients.

Furthermore we consider the restriction of $I_8(\Delta)$ to $\frak{H}_6\times 
\frak{H}_2$. We know that $S_{10}(\Gamma_2)$ is generated by the form $\chi_{10}$
and that $\dim S_{10}(\Gamma_6)=1$. Then restricting further to 
$\frak{H}_5\times \frak{H}_1 \times \frak{H}_2$, 
$\frak{H}_4\times \frak{H}_2 \times \frak{H}_2$ and 
$\frak{H}_3\times \frak{H}_3 \times \frak{H}_2$ 
gives further relations. Together these suffice to determine all the coefficients
$b(I_8(\Delta),T)$ for $T$ a positive definite half-integral
symmetric matrix with diagonal equal to $(1,\ldots,1)$.
At the same time it gives us a number of Fourier coefficients of the generator
$G \in S_{10}(\Gamma_6)$. This shows that we get a non-zero cusp 
form of weight $10$ on $\Gamma_6$.
The results are given in two tables.

\begin{footnotesize}
\smallskip
\vbox{
\bigskip\centerline{\def\quad{\hskip 0.6em\relax}
\def\quod{\hskip 0.5em\relax }
\vbox{\offinterlineskip
\hrule
\halign{&\vrule#&\strut\quod\hfil#\quad\cr
height2pt&\omit &&\omit &&\omit &&\omit &&\omit && \omit &\cr
& $2\, T_i$ && $b(I_8(\Delta),T_i)$ && $2\, T_i$ && $b(I_8(\Delta),T_i)$ && $2\, T_i$ && $b(I_8(\Delta),T_i)$ & \cr
\noalign{\hrule}
& $8A_1$ && $146657280$ && $6A_1\oplus A_2$ && $9676800$ && $4A_1\oplus 2A_2$ && $3456000$ & \cr
& $5A_1\oplus A_3$ && $-1612800$ && $2A_1\oplus 3A_2$ && $362880$ && $3A_1\oplus A_2\oplus A_3$ && $311040$ &\cr
& $4A_2$ && $1970568$ && $4A_1\oplus A_4$ && $-760320$ && $A_1\oplus 2A_2\oplus A_3$ && $-293760$ & \cr
& $2A_1\oplus 2A_3$ && $393728$ && $4A_1\oplus D_4$ && $-523776$ && $2A_1\oplus A_2 \oplus A_4$ && $-51840$ & \cr
& $A_2\oplus 2A_3$ && $-126720$ && $2A_1\oplus A_2\oplus A_4$ && $-34560$ && $3A_1\oplus A_5$ && $-34560$ & \cr
& $2A_2\oplus A_4$ && $146880$ && $A_1\oplus A_3 \oplus A_4$ && $23520$ && $2A_2\oplus D_4$ && $86400$ & \cr
& $A_1\oplus A_2 \oplus A_5$ && $-41328$ && $A_1\oplus A_3 \oplus D_4$ && $5760$ && $3A_1\oplus D_5 $ && $5760$ & \cr
& $2A_2\oplus A_6$ && $13440$ && $2A_4$ && $17330$ && $A_3\oplus D_5$ && $-12960$ & \cr
& $A_1\oplus A_2 \oplus D_5$ && $-12960$ && $A_2\oplus A_6$ && $5040$ && $A_4\oplus D_4$ && $8640$ & \cr
& $2D_4$ && $4416$ && $A_3\oplus D_5$ && $-4288$ && $A_1\oplus A_7$ && $-704$ & \cr
& $2A_1\oplus D_6$ && $3392$ && $A_2\oplus D_6$ && $1440$ && $2A_1\oplus E_6$ && $1440$ & \cr
& $A_2\oplus E_6$ && $1440$ && $A_8$ && $9$ && $A_1\oplus D_7$ && $-240$ & \cr
& $D_8$ && $8$ && $A_1\oplus E_7$ && $-56$ && $E_8$ && $1$ & \cr
} \hrule}
}}
\end{footnotesize}

\begin{footnotesize}
\smallskip
\vbox{
\bigskip\centerline{\def\quad{\hskip 0.6em\relax}
\def\quod{\hskip 0.5em\relax }
\vbox{\offinterlineskip
\hrule
\halign{&\vrule#&\strut\quod\hfil#\quad\cr
height2pt&\omit && \omit && \omit && \omit&& \omit && \omit && \omit && \omit &\cr
& $2N$ && $a(G,N)$ && $2N$ && $a(G,N)$ && $2N$ && $a(G,N)$ && $2N$ && $a(G,N)$ & \cr
\noalign{\hrule}
& $6A_1$ && $−280320$ && $4A_1\oplus A_2$ && $-15744$ && $2A_1\oplus 2A_2$ && 
$−8496$ && $3A_1\oplus A_3$ && $10176$ & \cr  \cr
& $3A_2$ && $12996$ && $A_1\oplus A_2 \oplus A_3$ && $−2472$ && $2A_1\oplus A_4$ && $1000$ && $2A_1\oplus D_4$ && $384$ & \cr
& $2A_3$ && $-1040$ && $A_2\oplus A_4$ && $750$ && $A_1\oplus A_5$ && $-164$ && $A_2\oplus D_4$ && $384$ & \cr
& $A_1\oplus D_5$ && $-52$ && $A_6$ && $7$ && $D_6$ && $2$ && $E_6$ && $1$ & \cr
} \hrule}
}}
\end{footnotesize}

\begin{proposition} The restriction of second order of $I_8(\Delta)$ along 
$\frak{H}_7 \times \frak{H}_1$ is a non-zero form in $S_{2,0,0,0,0,0,10}(\Gamma_7) \otimes S_{12}(\Gamma_1)$.
The restriction of the generator $G \in S_{10}(\Gamma_6)$ along $\mathfrak{H}_5 \times \mathfrak{H}_1$ is a non-zero form
in $S_{2,0,0,0,10}(\Gamma_5) \otimes S_{12}(\Gamma_1)$.
\end{proposition}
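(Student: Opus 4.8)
The plan is to apply the machinery of Section~\ref{restriction} --- specifically the case $j=1$ treated in the first Proposition --- to the two Ikeda lifts at hand, and to reduce the nonvanishing of the relevant Taylor term to a finite Fourier-coefficient computation using the tables just assembled. Recall that for a scalar-valued form $f\in M_k(\Gamma_g)$ whose restriction along $\frak{H}_1\times\frak{H}_{g-1}$ vanishes to order exactly $r$, the term $t_r(f)$ lands in $M_{k+r}(\Gamma_1)\otimes M_{(r,0,\ldots,0,k)}(\Gamma_{g-1})$; for $f=I_8(\Delta)\in S_{10}(\Gamma_8)$ we have $k=10$, $g=8$, and for $f=G\in S_{10}(\Gamma_6)$ we have $k=10$, $g=6$. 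So I first argue the restriction vanishes and determine to what order.

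\medskip

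\textbf{Step 1: the order of vanishing.} For $I_8(\Delta)$ along $\frak{H}_7\times\frak{H}_1$: the zeroth term $t_0$ is the honest restriction, which lies in $S_{10}(\Gamma_7)\otimes S_{10}(\Gamma_1)$; since $\dim S_{10}(\Gamma_1)=0$ (indeed $\dim S_{10}(\Gamma_7)=0$ too by the dimension table) this vanishes. The first term $t_1$ lies in $S_{11}(\Gamma_1)\otimes S_{1,0,0,0,0,0,10}(\Gamma_7)$; but $S_{11}(\Gamma_1)=0$ (odd weight $<12$), so $t_1=0$ as well. The second term $t_2$ lies in $S_{12}(\Gamma_1)\otimes S_{2,0,0,0,0,0,10}(\Gamma_7)$, and $S_{12}(\Gamma_1)=\CC\,\Delta$ is one-dimensional. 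Thus it suffices to show $t_2\neq 0$, and then automatically $t_2=(\text{scalar})\cdot\Delta\otimes\chi$ with $\chi\in S_{2,0,0,0,0,0,10}(\Gamma_7)$ nonzero. The identical argument works for $G\in S_{10}(\Gamma_6)$ along $\frak{H}_5\times\frak{H}_1$: $t_0\in S_{10}(\Gamma_5)\otimes S_{10}(\Gamma_1)=0$, $t_1\in S_{11}(\Gamma_1)\otimes(\cdots)=0$, and $t_2\in S_{12}(\Gamma_1)\otimes S_{2,0,0,0,10}(\Gamma_5)=\CC\Delta\otimes S_{2,0,0,0,10}(\Gamma_5)$.

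\medskip

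\textbf{Step 2: nonvanishing of $t_2$.} Writing $\tau=\left(\begin{smallmatrix}\tau'&z\\z^t&\tau''\end{smallmatrix}\right)$ with $\tau'\in\frak{H}_{g-1}$, $\tau''=\tau''_{1}\in\frak{H}_1$, and $z=(z_1,\ldots,z_{g-1})^t$ a column, the isotypic projection of $t_2$ onto $\Sym^2(\EE_{g-1})\otimes\EE_1^2$ is the vector of second partials $\partial^2 f/\partial z_a\partial z_b$ evaluated on the block-diagonal locus, which by the standard relation between Taylor coefficients of a theta-type series and its Fourier coefficients is an explicit finite linear combination of the $b(T)$ (resp.\ $a(G,N)$) with $T$ ranging over positive definite half-integral symmetric matrices of size $g$ whose top-left $(g-1)\times(g-1)$ block has diagonal $(1,\ldots,1)$, bottom-right entry $1$, and whose off-diagonal column is the index of differentiation. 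Concretely the coefficient of $q_1\cdots q_{g}$ in $\partial^2 f/\partial z_a\partial z_b$ is $-4\pi^2$ times the coefficient $b(T)$ for the matrix $T$ with $2T$ having the relevant $\pm1$'s in the last row/column. Plugging in from the two tables (e.g.\ for $G$ one reads off entries like $a(G,3A_1\oplus A_3)=10176$, $a(G,2A_1\oplus A_4)=1000$, $a(G,2A_1\oplus D_4)=384$, etc., whose combination into the $\Sym^2$-component one checks is nonzero), one verifies the vector $t_2$ is not identically zero. Since the only obstruction to it being a modular form is dimension-theoretic and already handled, this completes both assertions.

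\medskip

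\textbf{The main obstacle.} The delicate point is Step 2: organizing the Taylor-to-Fourier dictionary so that the $\Sym^2(\EE_{g-1})\boxtimes\EE_1^2$-component of $t_2$ is a \emph{manifestly} computable combination of the tabulated coefficients, and then exhibiting one specific entry of that vector that is demonstrably nonzero. (One cannot simply appeal to $\dim S_{2,0,\ldots,0,10}=1$ since that dimension statement for $g=5,7$ is exactly the point being established; the nonvanishing must come from the explicit Fourier data.) In practice the cleanest route is to compute the single partial $\partial^2/\partial z_1^2$ at a minimal index --- for $G$ along $\frak{H}_5\times\frak{H}_1$ this is driven by the coefficient of $q_1^2q_2\cdots q_6$, hence by $b$-values at matrices like $A_1\oplus 4A_1\oplus A_1$ and small perturbations thereof --- and check its leading nonzero term directly against the table, mirroring exactly the explicit partial-derivative computations carried out for $\chi_{18}$ and $J_8$ in the preceding sections. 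Everything else (the vanishing of $t_0,t_1$, the identification of the target representation via the $j=1$ Proposition, and the conclusion that the surviving term is a genuine cusp form on $\Gamma_{g-1}$) is immediate from the dimension table and the results of Section~\ref{restriction}.
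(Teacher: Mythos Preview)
Your approach is essentially the paper's: kill $t_0$ and $t_1$ by dimension (both factors vanish since $S_{10}(\Gamma_1)=S_{11}(\Gamma_1)=0$), identify $t_2$ with the vector of second partials in the off-diagonal entries $\tau_{i,g}$, and extract a nonzero Fourier coefficient from the tables. Two places where your sketch diverges from what has to be done:

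First, the target Fourier coefficient is $a(1_{g-1})$, the coefficient of $q_1\cdots q_{g-1}$ (all exponents $1$) in the vector-valued form on $\Gamma_{g-1}$, paired with $q_g$ from the $\Delta$ factor; your suggestion of ``$q_1^2 q_2\cdots q_6$'' is not right --- differentiating in the off-diagonal $\tau_{16}$ does not touch the diagonal exponents. Second, the phrase ``$-4\pi^2$ times the coefficient $b(T)$ for \emph{the} matrix $T$\ldots'' understates the work: each component of $t_2$ at $a(1_{g-1})$ is a sum over \emph{all} half-integral positive definite $g\times g$ matrices with upper-left block $1_{g-1}$ and lower-right entry $1$, weighted by products of the off-diagonal entries. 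The paper enumerates these sets explicitly --- $379$ matrices for $g=8$, falling into lattice classes $8A_1,\, 6A_1\oplus A_2,\, 5A_1\oplus A_3,\, 4A_1\oplus D_4$ with multiplicities $1,14,84,280$; and $131$ matrices for $g=6$, classes $6A_1,\, 4A_1\oplus A_2,\, 3A_1\oplus A_3,\, 2A_1\oplus D_4$ with multiplicities $1,10,40,80$ --- writes the resulting polynomial $P$ in the $q_{i,g}^{\pm 1}$, differentiates, and evaluates at $q_{i,g}=1$ to obtain the explicit nonzero diagonal values $4\pi^2\cdot 82206720$ and $-4\pi^2\cdot 149760$. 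That enumeration and the explicit evaluation are the substance of the proof; your Step~2 correctly names the mechanism but stops short of carrying it out.
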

\begin{proof}
We write the term $t_2(I_8(\Delta))$ as $F\otimes \Delta$ with $F \in S_{2,0,0,0,0,0,10}(\Gamma_7)$. It is given as the transpose of
$$
(
\frac{\partial^2 I_8(\Delta)}{\partial \tau_{18}^2},
2\frac{\partial^2 I_8(\Delta)}{\partial \tau_{18}\tau_{28}},
\ldots,
\frac{\partial^2 I_8(\Delta)}{\partial \tau_{78}^2}
)
\vert_{{\mathfrak{H}_7 \times \mathfrak{H}_1}}
$$
Then the Fourier expansion of $F$ starts as follows:
$$
F(\tau)=
\sum_{N>0} a(N) e^{2 \pi i {\rm{Tr}}(N\tau)}\\
=
\sum_{N\in \mathcal{I}_7} P_N(q_{ab},q_{ab}^{-1})\cdot q_1\cdots q_7+\ldots
$$
where
\[
\tau= (\tau_{ij})
\in \mathfrak{H}_7,
\quad
q_a=e^{2 \pi i \tau_{aa}},
\quad 
q_{ab}=e^{2 \pi i \tau_{ab}},
\quad 
P_N(X,X^{-1})\in \CC[X,X^{-1}]^{28}
\]
and $\mathcal{I}_7$ is the set of symmetric positive definite half-integral
$7 \times 7$ matrices with $(1,\ldots,1)$ on the diagonal.
The `constant' term in the Fourier expansion of $F$ is $a(1_7)$ and we get
it by substituting $\tau_{18}=\ldots = \tau_{78}=0$ in the transpose of 
$$
(
\frac{\partial^2 P}{\partial \tau_{18}^2},
2\frac{\partial^2 P}{\partial \tau_{18}\tau_{28}},
\ldots,
\frac{\partial^2 P}{\partial \tau_{78}^2}
)
$$
where
\[
P=\sum_{N\in \mathcal{I}_8} 
b(I_8(\Delta),N)
q_{18}^{n_{18}}\cdots q_{78}^{n_{78}}
\]
and $\mathcal{I}_8$ is the set of positive definite half-integral matrices
of the form
$
(\begin{matrix} 
1_7 & n \\ n^t & 1_1 \\ 
\end{matrix}
)
$.
This set of matrices contains $379$ elements and we can classify them modulo 
${\rm GL}(7,{\ZZ})$-equivalence.  As it turns out the lattices $8A_1(1/2)$,
$6A_1(1/2)\oplus A_2(1/2)$, $5A_1(1/2)\oplus A_3(1/2)$ and 
$4A_1(1/2)\oplus D_4(1/2)$
occur with multiplicities $1$, $14$, $84$ and $280$. 
We thus find
$$
\begin{aligned}
P=&\, b(I_8(\Delta),8\, A_1(1/2))+
b(I_8(\Delta),6\, A_1(1/2)\oplus A_2(1/2))(q_{18}+q_{18}^{-1}+\ldots+q_{78}+q_{78}^{-1})\\
&+
b(I_8(\Delta),5\, A_1(1/2)\oplus A_3(1/2))(q_{18}q_{28}+\ldots+q_{18}q_{78}+\ldots)\\
&+
b(I_8(\Delta),4\, A_1(1/2)\oplus D_4(1/2))(q_{18}q_{28}q_{38}+\ldots+q_{18}q_{68}q_{78}+\ldots)\\
=&\,
146657280+ 
9676800(q_{18}+q_{18}^{-1}+\ldots+q_{78}+q_{78}^{-1})\\
&
-1612800(q_{18}q_{28}+\ldots+q_{18}q_{78}+\ldots)
-523776(q_{18}q_{28}q_{38}+\ldots+q_{18}q_{68}q_{78}+\ldots).
\end{aligned}
$$
Thus we get
\[
^t a(1_7)=4\pi^2\cdot 82206720\cdot
[1, 0, 0, 0, 0, 0, 0, 1, 0, 0, 0, 0, 0, 1, 0, 0, 0, 0, 1, 0, 0, 0, 1, 0, 0, 1, 0, 1]
\]
which shows our result for $I_8(\Delta)$. For the restriction of $G$ the argument is similar. Instead of $\mathcal{I}_8$
we have a set $\mathcal{I}_6$ of $131$ elements representing the lattices 
$6A_1(1/2)$, $4A_1(1/2)\oplus A_2(1/2)$, $3A_1(1/2)\oplus A_3(1/2)$ and $2A_1(1/2)\oplus D_4(1/2)$ with multiplicities
$1$, $10$, $40$ and $80$ from which we find 
for the constant term $a(1_5)$ the transpose of
$$
-4\pi^2 149760 \, [1,0,0,0,0,1,0,0,0,1,0,0,1,0,1]
$$
\end{proof}

\smallskip

Along similar lines one finds the following result.

\begin{proposition}
The second order restriction $t_2(G)$ of the generator $G\in S_{10}(\Gamma_6)$  
along $\mathfrak{H}_5\times \mathfrak{H}_1$
is a non-zero form in $S_{2,0,0,0,10}(\Gamma_5)\otimes S_{12}(\Gamma_1)$. Furthermore,
the second order restriction of $G$ along $\mathfrak{H}_3\times \mathfrak{H}_3$
yields a form $f\otimes f$ with $f$ a non-zero form in $S_{2,0,10}(\Gamma_3)$.
\end{proposition}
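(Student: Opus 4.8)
The first assertion merely restates the last sentence of the preceding proposition, so what is genuinely new is the restriction along $\frak{H}_3\times\frak{H}_3$. The plan is: to pin down the leading Taylor term of $G$ along $\frak{H}_3\times\frak{H}_3$, to identify the bundle of which it is a section via the decomposition of $\Sy^2$ of a tensor product from Section~\ref{restriction}, and then to establish non-vanishing by computing one Fourier coefficient out of the table of the $a(G,N)$ above.

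First I would check $t_0(G)=t_1(G)=0$. Since $\dim S_{10}(\Gamma_3)=0$ the term $t_0(G)\in S_{10}(\Gamma_3)\otimes S_{10}(\Gamma_3)$ vanishes. For the odd terms, consider the element of $\Gamma_3\times\Gamma_3\subset\Gamma_6$ which is $-1_6\in\Gamma_3$ on the first factor and $1_6\in\Gamma_3$ on the second; under the modular embedding it becomes $\mathrm{diag}(-1_3,1_3,-1_3,1_3)$, it fixes $\frak{H}_3\times\frak{H}_3$ pointwise, sends the normal coordinate $z$ to $-z$, and has factor of automorphy $\det(c\tau+d)^{10}=\det(\mathrm{diag}(-1_3,1_3))^{10}=(-1)^{30}=1$, so $G(\tau',-z,\tau'')=G(\tau',z,\tau'')$ and $t_r(G)=0$ for every odd $r$, in particular $t_1(G)=0$. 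Hence $t_2(G)$ is a section of $L_3^{10}\boxtimes L_3^{10}\otimes\Sy^2(\EE_3\boxtimes\EE_3)$, and by
$$
\Sy^2(V_1\otimes V_2)\;\cong\;\Sy^2(V_1)\otimes\Sy^2(V_2)\;\bigxor\;\wedge^2(V_1)\otimes\wedge^2(V_2)
$$
(with $V_1,V_2$ of dimension $3$) it lies in $\bigl(M_{2,0,10}(\Gamma_3)\otimes M_{2,0,10}(\Gamma_3)\bigr)\oplus\bigl(M_{0,1,10}(\Gamma_3)\otimes M_{0,1,10}(\Gamma_3)\bigr)$, in fact in the spaces of cusp forms since $G$ is a cusp form (the Fourier expansion of $t_2(G)$ in $(\tau',\tau'')$ involves only positive definite indices, as $N=\left(\begin{smallmatrix}N'&n\\ n^{t}&N''\end{smallmatrix}\right)>0$ forces $N',N''>0$). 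By the tables of \cite{B-F-vdG} one has $S_{0,1,10}(\Gamma_3)=(0)$ (or, in any case, the $\wedge^2\boxtimes\wedge^2$-projection of $t_2(G)$ turns out to vanish in the Fourier computation below), while $\dim S_{2,0,10}(\Gamma_3)=1$. Therefore $t_2(G)=c\,\chi_{2,0,10}\otimes\chi_{2,0,10}$ for some $c\in\CC$, and if $c\neq 0$ this is $f\otimes f$ with $f=\sqrt c\,\chi_{2,0,10}$.

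It remains to prove $c\neq0$, for which I would compute the Fourier coefficient of $q_1\cdots q_6$ of the $\Sy^2(\EE_3)\boxtimes\Sy^2(\EE_3)$-projection of $t_2(G)$. Writing $\tau=\left(\begin{smallmatrix}\tau'&z\\ z^{t}&\tau''\end{smallmatrix}\right)\in\frak{H}_6$ with $z=(z_{ab})_{1\le a,b\le3}$ and $N=\left(\begin{smallmatrix}N'&n\\ n^{t}&N''\end{smallmatrix}\right)$, one has $\mathrm{Tr}(N\tau)=\mathrm{Tr}(N'\tau')+\mathrm{Tr}(N''\tau'')+2\sum_{a,b}n_{ab}z_{ab}$, so the degree-$2$-in-$z$ part of $a(G,N)\,e^{2\pi i\,\mathrm{Tr}(N\tau)}$ is $\tfrac12(4\pi i)^{2}a(G,N)\bigl(\sum_{a,b}n_{ab}z_{ab}\bigr)^{2}e^{2\pi i(\mathrm{Tr}(N'\tau')+\mathrm{Tr}(N''\tau''))}$, and extracting the coefficient of $q_1\cdots q_6$ forces $N'=N''=1_3$. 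Thus the coefficient in question equals
$$
\tfrac12(4\pi i)^{2}\,\mathrm{pr}\Bigl(\sum_{n}a\bigl(G,\left(\begin{smallmatrix}1_3&n\\ n^{t}&1_3\end{smallmatrix}\right)\bigr)\bigl(\textstyle\sum_{a,b}n_{ab}z_{ab}\bigr)^{2}\Bigr),
$$
the sum running over the $3\times3$ matrices $n$ with entries in $\{0,\pm\tfrac12\}$ for which $\left(\begin{smallmatrix}1_3&n\\ n^{t}&1_3\end{smallmatrix}\right)$ is positive definite (equivalently $n^{t}n<1_3$), and $\mathrm{pr}$ the projection onto $\Sy^2(V_1)\otimes\Sy^2(V_2)$. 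This is a finite set; classifying the matrices $\left(\begin{smallmatrix}1_3&n\\ n^{t}&1_3\end{smallmatrix}\right)$ modulo $\GL(3,\ZZ)\times\GL(3,\ZZ)$ produces the lattice types $4A_1\oplus A_2$, $2A_1\oplus2A_2$, $3A_1\oplus A_3$, $3A_2$, $A_1\oplus A_2\oplus A_3$, $2A_1\oplus A_4$, $2A_1\oplus D_4,\ldots,E_6$ occurring in the table of the $a(G,N)$, each with a definite multiplicity; substituting the tabulated integers and carrying out $\mathrm{pr}$ yields a non-zero vector, hence $c\neq0$.

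The main obstacle is precisely this last step: the enumeration of the admissible $n$, their grouping into $\GL(3,\ZZ)\times\GL(3,\ZZ)$-orbits with the correct multiplicities, and the projection $\mathrm{pr}$ — the same kind of bookkeeping already done for $\frak{H}_7\times\frak{H}_1$ and $\frak{H}_5\times\frak{H}_1$ in the proof of the previous proposition. Everything else (the vanishing of $t_0,t_1$, the representation-theoretic decomposition, the one-dimensionality of $S_{2,0,10}(\Gamma_3)$) is formal. As a consistency check one expects the resulting $f$ to be a scalar multiple of the generator $\chi_{2,0,10}$ of $S_{2,0,10}(\Gamma_3)$ that appeared earlier as $t_2(F_{10})$ in the restriction of $F_{10}\in S_{10}(\Gamma_4)$ along $\frak{H}_3\times\frak{H}_1$.
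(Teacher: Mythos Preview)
Your proposal is correct and follows essentially the same approach as the paper, which gives no explicit proof but simply states that the result follows ``along similar lines'' as the preceding proposition (i.e., by the Fourier-coefficient computation using the tabulated values $a(G,N)$). Your outline in fact supplies more detail than the paper does: the vanishing of $t_0$ and $t_1$, the $\Sy^2$-decomposition, the reduction to $S_{2,0,10}$ via dimension counts, and the shape of the non-vanishing computation are exactly what the paper's ``similar lines'' amount to.
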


Note that the spaces $S_{2,0,0,0,10}(\Gamma_5)$ and $S_{2,0,10}(\Gamma_3)$ are both $1$-dimensional according to Ta\"ibi \cite{Taibi}.

\end{section}
\begin{section}{The Sibling $\chi_{6,8}$ of degree $2$}\label{TheFormZ}
As we have seen the form $\chi_{6,8}$ in $S_{6,8}(\Gamma_2)$ 
appears ubiquitously. Its presence was first seen in the 
cohomology of local systems on the moduli spaces ${\mathcal A}_2$ 
and ${\mathcal M}_2$ in \cite{F-vdG}. One of us asked Ibukiyama
whether he could construct a form in $S_{6,8}(\Gamma_2)$.
Ibukiyama answered in 2001 with a construction of this form using theta
functions with pluriharmonic polynomial coefficients.
Now we have easier ways to construct it.
One way is as follows.
Let $G_i^t=(\partial \vartheta_i/\partial z_1, \partial \vartheta_i /\partial
z_2)$ be the (transposed) gradient of the $i$th odd theta function for
$i=1,\ldots,6$, see \cite{C-vdG-G}. 
It defines a section of ${\EE}_2\otimes \det({\EE}_2)^{1/2}$ 
for the congruence subgroup $\Gamma_2[4,8]$. We let ${\rm Sym}^j({\EE}_2)$
be the $\frak{S}_j$-invariant subbundle of ${\EE}_2^{\otimes j}$. 
Then the expression
$$
{\rm Sym}^j(G_1,\ldots,G_6)
$$
defines a cusp form $f_{6,3}$ of weight $(6,3)$ on the principal 
congruence subgroup $\Gamma_2[2]$. The product  
$\chi_5  \, f_{6,3}$ with $\chi_5$, the product of the ten even 
theta characteristics, is a form of level $1$ and
is up to a normalization equal to $\chi_{6,8}$: 
$$
\chi_{6,8}:=-(\chi_5\Sy^6(G_1,\ldots, G_6))/4096 \pi^{6}
$$
We write its Fourier expansion as
$$
{\chi}_{6,8}(\tau)=\sum_{N > 0}
a(N) e^{2 \pi i {\rm{Tr}}(N\tau)}
=
\sum_{N > 0}
\, ^t(a(N)_0,\ldots,a(N)_6) e^{2 \pi i {\rm{Tr}}(N\tau)} \, .
$$
It starts as follows (with $q_1=e^{2\pi i \tau_1}$, 
$q_2=e^{2\pi i \tau_2}$ and $r=e^{2\pi i \tau_{12}}$)

\begin{align*}
\chi_{6,8}(\tau)=&
\left(
\begin{smallmatrix}
0\\
0\\
r^{-1}-2+r\\
2(r-r^{-1})\\
r^{-1}-2+r\\
0\\
0
\end{smallmatrix}
\right)
q_1q_2+
\left(
\begin{smallmatrix}
0\\
0\\
-2(r^{-2}+8r^{-1}-18+8r+r^2)\\
8(r^{-2}+4r^{-1}-4r-r^2)\\
-2(7r^{-2}-4r^{-1}-6-4r+7r^2)\\
12(r^{-2}-2r^{-1}+2r-r^2)\\
-4(r^{-2}-4r^{-1}+6-4r+r^2)
\end{smallmatrix}
\right)
q_1q_2^2\\
&+
\left(
\begin{smallmatrix}
-4(r^{-2}-4r^{-1}+6-4r+r^2)\\
12(r^{-2}-2r^{-1}+2r-r^2)\\
-2(7r^{-2}-4r^{-1}-6-4r+7r^2)\\
8(r^{-2}+4r^{-1}-4r-r^2)\\
-2(r^{-2}+8r^{-1}-18+8r+r^2)\\
0\\
0
\end{smallmatrix}
\right)
q_1^2q_2
+
\left(
\begin{smallmatrix}
16(r^{-3}-9r^{-1}+16-9r+r^3)\\
-72(r^{-3}-3r^{-1}+3r-r^3)\\
+128(r^{-3}-2+r^3)\\
-144(r^{-3}+5r^{-1}-5r-r^3)\\
+128(r^{-3}-2+r^3)\\
-72(r^{-3}-3r^{-1}+3r-r^3)\\
16(r^{-3}-9r^{-1}+16-9r+r^3)\\
\end{smallmatrix}
\right)
q_1^2q_2^2
+\dots\\
\end{align*}

Using standard involutions one sees that  interchanging $q_1$ and $q_2$ 
inverts the order of the coordinates of $a(N)$, 
while interchanging $r$ and $1/r$ makes the $i$th coordinate 
change sign by $(-1)^{i}$ for $i=0,\ldots,6$.
One can read off the first non-zero Fourier coefficients:

\begin{footnotesize}
\smallskip
\vbox{
\bigskip\centerline{\def\quad{\hskip 0.6em\relax}
\def\quod{\hskip 0.5em\relax }
\vbox{\offinterlineskip
\hrule
\halign{&\vrule#&\strut\quod\hfil#\quad\cr
height2pt&\omit&&\omit&&\omit&\cr
& ${}^ta([1,0,1])$ && ${}^ta([1,1,1])$ && ${}^ta([1,0,2])$  &\cr
\noalign{\hrule}
& $(0,0,-2,0,-2,0,0)$ && $(0,0,1,2,1,0,0)$ && $(0,0,36,0,12,0,-24)$ & \cr
} \hrule}
}}
\end{footnotesize}

\begin{remark}
We can play the same game of restriction also with $\chi_{6,8}$. It restriction
to $\frak{H}_1\times \frak{H}_1$ vanishes. Its first non-vanishing term
is $t_1(\chi_{6,8})$ and it can be viewed as a section of 
$$
\oplus_{i=0}^6 \, L_1^{15-i}\otimes L_2^{9+i}
$$
with $L_1$ and $L_2$ the Hodge bundle ${\EE}_1$ on the first and second 
component. It turns out to be $(0,0,0,\Delta\otimes \Delta,0,0,0)$.
\end{remark}

The question arises what the zero locus of $\chi_{6,8}$ is. 
It contains ${\mathcal A}_{1,1}$.
Probably this is all. We prove that additionally there are only finitely
many points where $\chi_{6,8}$ vanishes.

\begin{proposition}
The zero locus of $\chi_{6,8}$ in ${\mathcal A}_2$ consists of ${\mathcal A}_{1,1}$ and
possibly finitely many isolated points.
\end{proposition}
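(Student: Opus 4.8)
The plan is to bound the number of zeros of $\chi_{6,8}$ away from $\mathcal A_{1,1}$ by a degree/intersection-theoretic argument on a suitable compactification. First I would recall that $\chi_{6,8}$ is a section of the vector bundle $\Sym^6(\EE_2)\otimes\det(\EE_2)^8$ on $\mathcal A_2$, and that by the remark above its restriction to $\mathcal A_{1,1}$ vanishes with $t_1(\chi_{6,8})=(0,0,0,\Delta\otimes\Delta,0,0,0)$; in particular $\chi_{6,8}$ vanishes to order exactly one along the divisor $\mathcal A_{1,1}$, and its leading term along $\mathcal A_{1,1}$ is nowhere zero on the open part where $\Delta\otimes\Delta\neq 0$. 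Since $\chi_{10}$ is the (up to scalar) unique modular form cutting out $\mathcal A_{1,1}$ as a reduced divisor, the quotient construction suggests writing $\chi_{6,8}=\chi_{10}\cdot g$ is \emph{not} possible (weights/ranks do not match), so instead I would work directly with the zero scheme $Z(\chi_{6,8})$ and show that $Z(\chi_{6,8})\smallsetminus \mathcal A_{1,1}$ is finite.

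The key steps, in order, are: (1) Pass to a smooth toroidal compactification $\overline{\mathcal A}_2$ on which $\EE_2$ extends as the Hodge bundle and on which the relevant Chern/Hodge class relations are available (the ring generated by $\lambda_1=c_1(L_2)$ modulo known relations, e.g. $\lambda_1^3=0$ type relations on $\mathcal A_2$ coming from Hirzebruch–Mumford proportionality and the structure of $\overline{\mathcal A}_2$). (2) Observe that any positive-dimensional component $Y$ of $Z(\chi_{6,8})$ other than $\mathcal A_{1,1}$ would be a curve (or surface, but $\mathcal A_2$ is a threefold and $\chi_{6,8}$ is a section of a rank-$7$ bundle, so generically its zero locus has the expected large codimension; one must rule out excess-dimensional loci). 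Because $\chi_{6,8}$ is a \emph{single} section of a rank-$7$ bundle on a $3$-fold, the expected dimension of its zero locus is negative, so generic sections vanish nowhere; the content is that \emph{this particular} section vanishes on $\mathcal A_{1,1}$ and one must control any residual locus. I would argue that along any putative curve $C\subset Z(\chi_{6,8})$ not contained in $\mathcal A_{1,1}$, all seven components $a(N)_0,\dots,a(N)_6$ of $\chi_{6,8}$ vanish simultaneously, and then use an auxiliary scalar-valued modular form built from $\chi_{6,8}$ — for instance a suitable $\Sp(4,\ZZ)$-invariant polynomial in the components such as the discriminant or a Petersson-type contraction $\langle \chi_{6,8},\chi_{6,8}\rangle$ landing in $M_{16}(\Gamma_2)$ or $M_{m,n}$ with small $(m,n)$ — whose zero divisor is computable in the known ring $M_\bullet(\Gamma_2)=\CC[E_4,E_6,\chi_{10},\chi_{12},\chi_{35}]/(\chi_{35}^2-\cdots)$, and show that divisor is exactly $\mathcal A_{1,1}$ (with multiplicity), forcing $C\subset\mathcal A_{1,1}$, a contradiction. (3) Having excluded positive-dimensional components outside $\mathcal A_{1,1}$, conclude that $Z(\chi_{6,8})\smallsetminus\mathcal A_{1,1}$ is a closed subvariety of dimension $0$, hence finite since $\overline{\mathcal A}_2$ is projective.

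The main obstacle I expect is step (2): producing a scalar-valued modular form from $\chi_{6,8}$ whose divisor I can pin down exactly, and in particular showing that contraction does not introduce spurious zeros away from $\mathcal A_{1,1}$ (the Petersson square is not holomorphic, so one needs instead a holomorphic covariant, e.g. an element of the $\GL_2$-equivariant map $\Sym^6\otimes\Sym^6\to \Sym^{12}\oplus\cdots\oplus\det^6$ applied to $\chi_{6,8}\otimes\chi_{6,8}$, giving forms like $\chi_{6,8}\boxtimes\chi_{6,8}$ projected to scalar weight $22$ or to $\Sym^k$-valued forms of small weight whose spaces are known to be small or zero). Controlling the base locus of such a covariant — i.e.\ showing it vanishes only where \emph{all} of $\chi_{6,8}$ does — is the delicate point, and one may need the explicit low-order Fourier coefficients tabulated above (the values ${}^ta([1,0,1])=(0,0,-2,0,-2,0,0)$ etc.) to check non-vanishing of the relevant scalar form at enough test points, or to invoke the known dimension $\dim S_{6,8}(\Gamma_2)=1$ together with Hecke-eigenform properties (the eigenvalues computed later in the paper) to argue that $\chi_{6,8}$ cannot be divisible by $\chi_{10}$ in any meaningful covariant sense beyond the single expected vanishing along $\mathcal A_{1,1}$. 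An alternative, possibly cleaner route to step (2) is a local argument: at a point $p\notin\mathcal A_{1,1}$, use the expression $\chi_{6,8}=-(\chi_5\,\Sym^6(G_1,\dots,G_6))/4096\pi^6$ and the fact that on $\mathcal A_2\smallsetminus\mathcal A_{1,1}$ (i.e.\ for Jacobians of genus-$2$ curves) the gradients $G_i$ of the odd theta functions are related to Weierstrass points, so $\Sym^6(G_1,\dots,G_6)$ vanishes only along the closed locus where these six vectors fail to span enough of $\Sym^6\EE_2$, a condition that cuts out at most a proper subvariety whose dimension can be estimated directly; combining with $\chi_5\neq 0$ off the corresponding theta-null divisor then localizes the zeros, and the theta-null divisor meets $\mathcal A_2\smallsetminus\mathcal A_{1,1}$ in something one checks is at most finite after removing $\mathcal A_{1,1}$.
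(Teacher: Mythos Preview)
Your proposal never actually proves the crucial step (2): you sketch several candidate constructions (a scalar covariant of $\chi_{6,8}\otimes\chi_{6,8}$, a Petersson-type contraction, an analysis of when $\Sym^6(G_1,\ldots,G_6)$ degenerates) but you do not carry any of them through, and you explicitly flag the base-locus control as ``the delicate point.'' In fact the covariant route has a structural problem: a single $\GL_2$-covariant $\Sym^6\otimes\Sym^6\to\det^6$ applied to $\chi_{6,8}^{\otimes 2}$ lands in $S_{22}(\Gamma_2)$, and there is no reason its divisor should equal $2\cdot{\mathcal A}_{1,1}$ on the nose (it cannot be a power of $\chi_{10}$ for weight reasons), so extra zeros are to be expected; conversely, any family of covariants whose common vanishing detects exactly $\chi_{6,8}=0$ amounts to using all seven coordinates, which brings you back to the original problem. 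The theta-gradient alternative is likewise only a heuristic: you would still have to identify the degeneracy locus of $(G_1,\ldots,G_6)$ inside $\Sym^6\EE_2$ and show it is at most $0$-dimensional off ${\mathcal A}_{1,1}$, which is not obvious.

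The paper's argument is entirely different and avoids these difficulties. One works in the Satake compactification ${\mathcal A}_2^*$, where the closure $\overline{{\mathcal A}}_{1,1}$ is an ample divisor; hence any curve component $C$ of the zero locus not contained in ${\mathcal A}_{1,1}$ must meet $\overline{{\mathcal A}}_{1,1}$. Near a smooth point of ${\mathcal A}_{1,1}$ the leading term $t_1(\chi_{6,8})=(0,0,0,\Delta\otimes\Delta,0,0,0)$ shows $\chi_{6,8}$ is nonzero in a punctured neighborhood, so $C$ can only approach $\overline{{\mathcal A}}_{1,1}$ at the $0$-dimensional cusp $(\infty,\infty)$. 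But there the explicit Fourier expansion (the $q_1q_2$-coefficient is nonzero) shows $\chi_{6,8}$ does not vanish in a punctured neighborhood of the cusp off $\overline{{\mathcal A}}_{1,1}$ either, a contradiction. This ``ampleness forces intersection, local expansion forbids it'' argument uses only the information already computed (the $t_1$-term and the first Fourier coefficient) and no intersection theory on a toroidal model.
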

\begin{proof}
If $V$ is the zero locus of $\chi_{6,8}$ in ${\mathcal A}_2$ 
then $V$ contains ${\mathcal A}_{1,1}$.
Let $V_1$ be the $1$-dimensional part of $V$. Then the closure  $\bar{V}_1$
of $V_1$ 
in the Satake compactification ${\mathcal A}_2^*$ intersects 
the closure $\overline{\mathcal A}_{1,1}$
of ${\mathcal A}_{1,1}$ since it is an ample divisor.
But in the neighborhood of $\overline{\mathcal A}_{1,1}$ our form is given
by $(0,0,0,\Delta\otimes \Delta,0,0,0)$ and this does not vanish 
in $U\backslash \bar{\mathcal A}_{1,1}$ with $U$ a suitable open
neighborhood of $\bar{\mathcal A}_{1,1}$ in ${\mathcal A}_2^*$.
Hence every irreducible component of 
the curve $V_1$ has to intersect $\overline{\mathcal A}_{1,1}$
at the point $(\infty,\infty)$, the $0$-dimensional cusp of 
${\mathcal A}_2^*$. But there the Fourier series shows that
$\chi_{6,8}$ does not vanish in 
$U'\backslash (U'\cap \overline{\mathcal A}_{1,1})$
for $U'$ a suitable neighborhood of the $0$-dimensional cusp of 
${\mathcal A}_2^*$.
\end{proof}

We end this section by giving a list of eigenvalues $\lambda_p$ for the Hecke
operator $T_p$ for $p$ prime $\leq 47$. These eigenvalues were obtained by
using the Fourier expansion. In an appendix we give a short
summary how to calculate such eigenvalues. 
In \cite{F-vdG} the eigenvalues $\lambda_p$ of $T(p)$ and $\lambda_{p^2}$ of $T(p^2)$ 
were calculated for $p=2,3,5$ and $7$ using a completely different 
method, namely using counting points on genus $2$ curves
 over finite fields. 
Also the eigenvalues of all $T(q)$ for $q\leq 37$ and $q\neq 27$ 
are available by \cite{F-vdG}, see also \cite{B-F-vdG}.
These values agree. Yet another way to obtain these eigenvalues is given in
\cite[Section 2, Table 3]{C} 
where Chenevier and Lannes 
use the Kneser neighborhoods of lattices to calculate these.

\begin{footnotesize}
\smallskip
\vbox{
\bigskip\centerline{\def\quad{\hskip 0.6em\relax}
\def\quod{\hskip 0.5em\relax }
\vbox{\offinterlineskip
\hrule
\halign{&\vrule#&\strut\quod\hfil#\quad\cr
height2pt&\omit&&\omit&&\omit&\cr
&$p$ && $\lambda_p$ && $\lambda_{p^2}$ & \cr
\noalign{\hrule}
& $2$ && $0$ && $-57344$ & \cr
& $3$ && $-27000$ && $143765361$ & \cr
& $5$ && $2843100$ &&$-7734928874375$ & \cr
& $7$ && $-107822000$ && $4057621173384801$ & \cr
& $11$ && $3760397784$ &&& \cr
& $13$ && $9952079500$ &&& \cr
& $17$ && $243132070500$ &&& \cr
& $19$ && $595569231400$ &&& \cr
& $23$ && $-6848349930000$ &&& \cr
& $29$ && $53451678149100$ &&& \cr
& $31$ && $234734887975744$ &&& \cr
& $37$ && $448712646713500$ &&& \cr
& $43$ && $ -1828093644641000$ &&& \cr 
& $47$ && $-6797312934516000$ &&& \cr
} \hrule}
}}
\end{footnotesize}

\end{section}
\begin{section}{The Sibling in $S_{4,0,8}(\Gamma_3)$}\label{TheFormF408}
Let $B$ (beta) denote a generator of the $1$-dimensional space
$S_{4,0,8}(\Gamma_3)$. We can find $B$ in the restriction $B \otimes \Delta$
of the Schottky form $J_8$ along $\frak{H_3}\times \frak{H}_1$.
This enables us to write the beginning of the Fourier expansion.
With the variables $q_{m}=e^{2\pi i \tau_m}$ and 
$u=e^{2\pi i \tau_{12}}$,
$v=e^{2\pi i \tau_{13}}$,
$w=e^{2\pi i \tau_{23}}$, we find the expansion
$$
B(\tau)=
\left( \begin{smallmatrix}
0\\ 0 \\ 0 \\
(v-1)^2(w-1)^2/vw \\
(u-1)(v-1)(w-1)(-1+1/vw+1/uw-1/uv) \\
(u-1)^2(w-1)^2/uw \\
0\\
(u-1)(v-1)(w-1)(-1+1/vw-1/uw+1/uv) \\
(u-1)(v-1)(w-1)(-1-1/vw+1/uw+1/uv)\\
0\\ 0 \\ 0 \\
(u-1)^2(v-1)^2/uv \\
0\\ 0\\
\end{smallmatrix}
\right )q_1q_2q_3+\cdots
$$
Using the action of $\gamma\in \Gamma_3$
that sends $\tau_{ij}$ to $\tau'=\tau_{\sigma(i),\sigma(j)}$
with $\sigma=(1\, 3) \in \frak{S}_3$ 
we see that the coordinates $v_i$ ($i=1,\ldots,15$) of the Fourier
coefficients satisfy the identities
$v_1(\tau)=v_{15}(\tau')$, $v_2(\tau)=v_{14}(\tau')$,
$v_3(\tau)=v_{10}(\tau')$,  $v_4(\tau)=v_{13}(\tau')$,
$v_5(\tau)=v_9(\tau')$, $v_6(\tau)=v_6(\tau')$,
$v_7(\tau)=v_{12}(\tau')$, $v_8(\tau)=v_8(\tau')$
One can use the Fourier coefficients
$$
\begin{aligned}
a([1\, 1\, 1\,; 0\, 0\, 0])&=^t[0, 0, 0, 4, 0, 4, 0, 0, 0, 0, 0, 0, 4, 0, 0]\\
a([2\, 2\, 2\,; 0\, 0\, 0])&=^t[-512, 0, 0, -2816, 0, -2816, 0, 0, 0, 0, -512, 0, -2816, 0, -512] \\
a([1\, 1\, 2\,; 1\, 2\, 2])&=^t[0, 0, 0, 0, 1, 1, 0, 1, 3, 2, 0, 0, 1, 2, 1] \\
a([1\, 2\, 2\,; 0\, 2\, 0])&=^t [0, 0, 0, -24, 0, -48, 0, -48, 0, -96, 48, 0, -48, 0, -48] \\
a([1\, 2\, 2\,; 2\, 0\, 0])&=^t [0, 0, 0, -48, 0, -24, -96, 0, -48, 0, -48, 0, -48, 0, 48];
\end{aligned}
$$
to calculate the Hecke eigenvalue at $2$, see the appendix. 
We have $\lambda_2=-1728$, 
and this fits because $B$ is a lift of $\Delta$ 
from $\Gamma_1$ (see \cite{B-F-vdG}) 
and its Hecke eigenvalue $\lambda_p$ is given by the formula
$$
\lambda_p= \tau(p)(p^5+\tau(p)+p^6)\, ,
$$
where $\tau(p)$ is the Hecke eigenvalue of $\Delta$ at $p$.
\end{section}
\begin{section}{Appendix: Hecke Operators for Vector valued Modular forms of 
Degree Two}
Here we give a short treatment of Hecke operators on vector-valued Siegel
modular forms of degree $2$. The basic reference is \cite{Andrianov}.

\subsection{The Hecke operator $T_p$}
For a prime $p$ we consider the double coset 
$T_p=\Gamma_2 \, {\rm{diag}}(1,1,p,p)\Gamma_2$.
Following Andrianov, we have the following left coset 
decomposition for $T_p$.

\begin{proposition}
The double coset $T_p$ admits the following left coset decomposition:
$$
\Gamma_2 
\left(
\begin{smallmatrix}
p & 0 & 0 & 0 \\
0 & p & 0 & 0 \\
0 & 0 & 1 & 0 \\
0 & 0 & 0 & 1
\end{smallmatrix}
\right)
+
\sum_{0 \leq a,b,c \leq p-1}
\Gamma_2 
\left(
\begin{smallmatrix}
1 & 0 & a & b \\
0 & 1 & b & c \\
0 & 0 & p & 0 \\
0 & 0 & 0 & p
\end{smallmatrix}
\right)
+
\sum_{0 \leq a \leq p-1}
\Gamma_2 
\left(
\begin{smallmatrix}
0 & -p & 0 & 0 \\
1 & 0 & a & 0 \\
0 & 0 & 0 & -1 \\
0 & 0 & p & 0
\end{smallmatrix}
\right)
+
\sum_{0 \leq a,m \leq p-1}
\Gamma_2 
\left(
\begin{smallmatrix}
p & 0 & 0 & 0 \\
-m & 1 & 0 & a \\
0 & 0 & 1 & m \\
0 & 0 & 0 & p
\end{smallmatrix}
\right)
$$
and we have $\deg(T_p)=p^3+p^2+p+1$.
\end{proposition}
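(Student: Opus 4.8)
The plan is to reduce the count to a coset index for the Siegel parabolic and then recognize the displayed matrices as a complete set of representatives by means of an invariant that separates left cosets. Set $g=\mathrm{diag}(1,1,p,p)$. For $\gamma=\left(\begin{smallmatrix}A&B\\ C&D\end{smallmatrix}\right)\in{\rm Sp}(4,\ZZ)$ one has $g^{-1}\gamma g=\left(\begin{smallmatrix}A&pB\\ p^{-1}C&D\end{smallmatrix}\right)$, which is integral exactly when $C\equiv 0\pmod p$; hence $\Gamma_2\cap g^{-1}\Gamma_2 g$ is the full preimage, under reduction modulo $p$, of the Siegel parabolic $P\subset{\rm Sp}(4,\FF_p)$ (Levi quotient ${\GL}_2$, unipotent radical the symmetric $2\times 2$ matrices). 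Since $\deg T_p=[\Gamma_2:\Gamma_2\cap g^{-1}\Gamma_2 g]$, the map $\Gamma_2\to{\rm Sp}(4,\FF_p)$ is surjective, and the principal congruence subgroup $\Gamma_2(p)$ lies in $\Gamma_2\cap g^{-1}\Gamma_2 g$, we get
\[
\deg T_p=[{\rm Sp}(4,\FF_p):P]=\frac{p^{4}(p^{2}-1)(p^{4}-1)}{p^{4}(p-1)(p^{2}-1)}=\frac{p^{4}-1}{p-1}=p^{3}+p^{2}+p+1 ,
\]
which is also the number $(p+1)(p^{2}+1)$ of Lagrangian $2$-planes in $\FF_p^{4}$, as it must be since $P\backslash{\rm Sp}(4,\FF_p)$ is the Lagrangian Grassmannian.

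Next I would observe that every one of the $1+p^{3}+p+p^{2}=p^{3}+p^{2}+p+1$ matrices in the statement is integral with symplectic multiplier $p$, and that any integral symplectic similitude of prime multiplier $p$ automatically lies in $T_p$: its elementary divisors $d_1\mid d_2\mid d_3\mid d_4$ satisfy $d_1d_4=d_2d_3=p$, which forces $(d_1,d_2,d_3,d_4)=(1,1,p,p)$, so it belongs to the single double coset $\Gamma_2 g\Gamma_2$. Thus all the displayed matrices lie in $T_p$, and there are exactly $\deg T_p$ of them.

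It then remains to see that they represent pairwise distinct left cosets. Here I would use that the coset $\Gamma_2 M$ is completely determined by the plane $W_M:=(\ZZ^{4}M+p\ZZ^{4})/p\ZZ^{4}\subseteq\FF_p^{4}$, the reduction modulo $p$ of the row lattice of $M$, which is unchanged when $M$ is replaced by $\gamma M$ with $\gamma\in\Gamma_2$. Reading off the rows modulo $p$ gives $W_M$ explicitly for each displayed matrix: for the $p^{3}$ matrices with block $\left(\begin{smallmatrix}a&b\\ b&c\end{smallmatrix}\right)$ it is the graph of that symmetric map, while for $\mathrm{diag}(p,p,1,1)$ and for the two families indexed by $a$ and by $(a,m)$ one reads off the plane directly; one then checks at once that these $p^{3}+p^{2}+p+1$ planes are pairwise distinct (the graphs are distinct because the symmetric maps are; the remaining planes are not graphs over the first two coordinates, and within each family the parameters are recovered from the reduced rows). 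Having $\deg T_p$ elements of $T_p$ in pairwise distinct left $\Gamma_2$-cosets, they form a complete system of representatives, which is the assertion, and $\deg T_p=p^{3}+p^{2}+p+1$.

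In this form the proof has no serious obstacle: the only point requiring care is a consistent choice of conventions (block ordering, which Lagrangian is fixed by $P$, and how $\Gamma_2$ acts on $\FF_p^{4}$) so that the finite distinctness check in the last step goes through cleanly. If instead one wants the list itself to be produced by a reduction — the route implicit in the cited work of Andrianov — then the work shifts there: bring $M\in T_p$ to block-upper-triangular form by a left multiplication from $\Gamma_2$, then apply the generators $\left(\begin{smallmatrix}U&0\\ 0&{}^tU^{-1}\end{smallmatrix}\right)$ with $U\in{\GL}_2(\ZZ)$, $\left(\begin{smallmatrix}I&S\\ 0&I\end{smallmatrix}\right)$ with $S$ symmetric integral, and $J=\left(\begin{smallmatrix}0&-I\\ I&0\end{smallmatrix}\right)$, splitting into cases according to the Smith normal form $\mathrm{diag}(1,1)$, $\mathrm{diag}(1,p)$ or $\mathrm{diag}(p,p)$ of the upper-left block, the middle case breaking further into the subfamilies of sizes $p$ and $p^{2}$; carrying this case analysis down to the stated normal forms is then the main effort.
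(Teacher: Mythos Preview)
Your argument is correct. The paper itself gives no proof of this proposition: it simply states the decomposition ``following Andrianov'' and cites \cite{Andrianov}, so there is nothing in the paper to compare against directly.

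Your route---identifying $\Gamma_2\cap g^{-1}\Gamma_2 g$ as the inverse image of the Siegel parabolic $P\subset{\rm Sp}(4,\FF_p)$ to get $\deg T_p=[{\rm Sp}(4,\FF_p):P]=(p^4-1)/(p-1)$, then separating the listed cosets by the Lagrangian plane $W_M=(\ZZ^4 M+p\ZZ^4)/p\ZZ^4$---is the clean conceptual version of the computation. The elementary-divisor remark showing that every integral similitude of multiplier $p$ lies in the single double coset is the right way to avoid checking membership matrix by matrix, and for the distinctness step you in fact only need the easy implication ($\Gamma_2 M_1=\Gamma_2 M_2\Rightarrow W_{M_1}=W_{M_2}$), since the count then forces bijectivity. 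Your case check on the $W_M$'s goes through: the $p^3$ ``graph'' planes project onto the first two coordinates, $\mathrm{diag}(p,p,1,1)$ gives the plane $\langle e_3,e_4\rangle$ with zero projection, and the two remaining families project to the lines $\langle(1,0)\rangle$ and $\langle(-m,1)\rangle$ respectively, with the parameters $a$ (resp.\ $(a,m)$) recoverable from the plane.

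Andrianov's original treatment, which you sketch in your final paragraph, instead produces the list by an explicit reduction to block-upper-triangular form followed by a Smith-normal-form case split on the upper-left $2\times2$ block; that approach yields the representatives constructively but is more laborious. Your Lagrangian-Grassmannian argument buys a shorter proof once the representatives are already written down, at the cost of not explaining where the list comes from. Either is fine here.
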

Then we define an action of $T_p$ on $M_{j,k}(\Gamma_2)$ via 
$$
F\vert_{j,k}\, T_p=
p^{2k+j-3}
\sum_{i}
F\vert_{j,k}\, \sigma_i
$$
where for
$
 \sigma_i=
 \left(
\begin{smallmatrix}
a_i & b_i  \\
c_i & d_i \\
\end{smallmatrix}
\right)
$,
the slash operator is given by:
\[
(F\vert_{j,k}\, \sigma_i)(\tau)=
\det(c_i\tau+d_i)^{-k}
{\rm{Sym}}^j ((c_i\tau+d_i)^{-1})
F((a_i\tau+b_i)(c_i\tau+d_i)^{-1})
\]
for every $\tau\in \frak{H}_2$. 

The action on the Fourier expansion of 
$F(\tau)=\sum_{N\geq 0} a(N) e^{2 \pi i {\rm Tr} (N\tau)}$
is given by the following proposition.
If
$$
(F\vert_{j,k}\, T_p)(\tau)=
\sum_{N\geqslant 0}
b(N) e^{2 \pi i {\rm{Tr}}(N\tau)}
$$
and if we write 
$[n_1,n_{12},n_2]$ for
$
N=
 \left(
\begin{smallmatrix}
n_1 & n_{12}/2  \\
 n_{12}/2 & n_2 \\
\end{smallmatrix}
\right)
$
with $n_i\in \NN$ and $n_{12}\in \ZZ$, we have
\begin{proposition}\label{ActionTp}
The coefficient $b([n_1,n_{12},n_2])$ is given by
\begin{align*}
& p^{2k+j-3}a([\frac{n_1}{p},\frac{n_{12}}{p},\frac{n_2}{p}])+
a([pn_1,pn_{12},pn_2]) 
+p^{k+j-2} 
\Sy^j
 \left(
\begin{smallmatrix}
0 & 1/p  \\
-1 & 0 \\
\end{smallmatrix}
\right) 
a([\frac{n_2}{p},-n_{12}, pn_1])\\
&+
p^{k+j-2} 
\sum_{0 \leq m \leq p-1}
\Sy^j
 \left(
\begin{smallmatrix}
1 & -m/p  \\
0 & 1/p \\
\end{smallmatrix}
\right) 
a([\frac{n_1+n_{12}m+n_2m^2}{p},n_{12}+2n_2m, pn_2])
\end{align*}
where $a([n_1,n_{12},n_{2}])=0$ if $n_1$, $n_2$ and $n_{12}$ are not all integral.
\end{proposition}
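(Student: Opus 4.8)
The plan is to substitute each of the four families of left coset
representatives $\sigma_i=\left(\begin{smallmatrix} a_i & b_i \\ c_i & d_i
\end{smallmatrix}\right)$ from the previous proposition into the defining
formula $F\vert_{j,k}T_p=p^{2k+j-3}\sum_i F\vert_{j,k}\sigma_i$, to work out each
slash term on the Fourier expansion, and to read off the contribution to the
$N$-th Fourier coefficient. The useful observation is that every representative
occurring in the decomposition has $c_i=0$, so that $(c_i\tau+d_i)^{-1}=d_i^{-1}$
is a constant matrix and $\tilde\tau_i:=(a_i\tau+b_i)d_i^{-1}$ is again a
symmetric $2\times 2$ matrix, linear in $\tau$ and in the free parameters of the
family. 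Expanding $F(\tilde\tau_i)=\sum_N a(N)e^{2\pi i\,{\rm Tr}(N\tilde\tau_i)}$
and rewriting ${\rm Tr}(N\tilde\tau_i)$ as a linear form in $\tau_1,\tau_{12},
\tau_2$ plus a term linear in the parameter $a$ (and $m$), I would collapse the
sum over $a$ by the elementary identity $\sum_{0\le a\le p-1}e^{2\pi i\,na/p}=p$
if $p\mid n$ and $0$ otherwise. The operators $\Sy^j(d_i^{-1})$ act on the target
of $F$, hence commute with extraction of Fourier coefficients; together with
$\det(d_i)^{-k}$, the factor $p$ produced by the sum over $a$, and the
normalisation $p^{2k+j-3}$, they should assemble precisely into the
$\Sy^j$-matrices and powers of $p$ in the statement.

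Carrying this out, the representative ${\rm diag}(p,p,1,1)$ has $d=1_2$ and
$\tilde\tau=p\tau$, contributing $p^{2k+j-3}a([n_1/p,n_{12}/p,n_2/p])$; the
family with $a_i=1_2$, $d_i=p1_2$ has $\det(p1_2)^{-k}=p^{-2k}$,
$\Sy^j((p1_2)^{-1})=p^{-j}$ and $\tilde\tau=(\tau+B)/p$, and since
${\rm Tr}(NB)=n_1a+n_{12}b+n_2c$ the sum of $e^{2\pi i\,{\rm Tr}(NB)/p}$ over the
$p^3$ symmetric matrices $B$ modulo $p$ equals $p^3$ exactly when $p\mid N$, so
after writing $N=pM$ this family contributes $a([pn_1,pn_{12},pn_2])$ --- these
are the first two terms. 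For the third family I expect
$d^{-1}=\left(\begin{smallmatrix}0&1/p\\-1&0\end{smallmatrix}\right)$,
$\det d=p$ and
$\tilde\tau=\left(\begin{smallmatrix}p\tau_2&-\tau_{12}\\-\tau_{12}&(\tau_1+a)/p
\end{smallmatrix}\right)$, so that ${\rm Tr}(N\tilde\tau)=\tfrac{n_2}{p}\tau_1
-n_{12}\tau_{12}+pn_1\tau_2+\tfrac{n_2a}{p}$; the sum over $a$ and a re-indexing
then produce $p^{k+j-2}\Sy^j\!\left(\begin{smallmatrix}0&1/p\\-1&0
\end{smallmatrix}\right)a([n_2/p,-n_{12},pn_1])$. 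For the fourth family I expect
$d^{-1}=\left(\begin{smallmatrix}1&-m/p\\0&1/p\end{smallmatrix}\right)$,
$\det d=p$ and $\tilde\tau=\left(\begin{smallmatrix}p\tau_1&\tau_{12}-m\tau_1\\
\tau_{12}-m\tau_1&(m^2\tau_1-2m\tau_{12}+\tau_2+a)/p\end{smallmatrix}\right)$,
whose trace against $N$ has $\tau_2$-coefficient $n_2/p$, $\tau_{12}$-coefficient
$n_{12}-2mn_2/p$, $\tau_1$-coefficient $pn_1-mn_{12}+m^2n_2/p$ and constant part
$n_2a/p$; summing over $a$ forces $p\mid n_2$, and solving the induced linear
relation for the summation index gives $p^{k+j-2}\sum_{0\le m\le p-1}
\Sy^j\!\left(\begin{smallmatrix}1&-m/p\\0&1/p\end{smallmatrix}\right)
a([(n_1+n_{12}m+n_2m^2)/p,\,n_{12}+2n_2m,\,pn_2])$. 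Adding the four
contributions yields the asserted formula, the integrality hypotheses on the
indices being folded into the convention that $a([\cdot,\cdot,\cdot])$ vanishes
unless all three entries are integral.

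The hard part will not be any individual step but the bookkeeping in the last
two families: carrying out the products $(a_i\tau+b_i)d_i^{-1}$ without sign
errors, checking that the outcome $\tilde\tau_i$ is symmetric, rewriting
${\rm Tr}(N\tilde\tau_i)$ as a linear form in $\tau_1,\tau_{12},\tau_2$, and then
inverting the resulting affine change of the summation index $N$ so as to see
which coefficient $a(N)$ of $F$ feeds into a prescribed $b([n_1,n_{12},n_2])$;
in parallel one must keep an exact count of the powers of $p$ contributed by
$\det d_i$, by $\Sy^j(d_i^{-1})$, by the sum over the parameter $a$, and by the
overall factor $p^{2k+j-3}$. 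Once that accounting is set up, the rest is the
routine application of $\sum_{a\bmod p}e^{2\pi i\,na/p}=p\cdot\mathbf{1}_{p\mid n}$
and of the $\Sy^j$-functoriality of the slash action.
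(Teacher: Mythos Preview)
Your approach is correct and is exactly the standard computation; the paper does not supply a proof for this proposition but merely states it, referring to Andrianov for the background, so there is nothing to compare against beyond the coset decomposition you already use. Each of your four contributions is computed correctly: the matrices $\tilde\tau_i=(a_i\tau+b_i)d_i^{-1}$ you write down are right, the trace identities are right, and the power-of-$p$ bookkeeping ($p^{2k+j-3}\cdot\det(d_i)^{-k}\cdot p^{\#\text{params summed}}$, together with the homogeneity of $\Sy^j$) assembles to the displayed coefficients.

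One small remark on presentation: in your treatment of the third and fourth families you use the letter $N$ both for the summation index in the Fourier expansion of $F$ and for the target index in $b(N)$, which makes sentences like ``summing over $a$ forces $p\mid n_2$'' momentarily ambiguous. In fact, once you solve for the summation index $M$ in terms of the target index $N$, the divisibility condition coming from $\sum_{a}e^{2\pi i m_2 a/p}$ is automatically satisfied (since $m_2=pn_1$ in family~3 and $m_2=pn_2$ in family~4); the genuine integrality constraint sits instead on the \emph{first} entry of $M$ (namely $n_2/p$, respectively $(n_1+n_{12}m+n_2m^2)/p$), and is absorbed into the convention $a([\cdot,\cdot,\cdot])=0$ for non-integral arguments. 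If you separate the two indices notationally when you write this up, that point will be transparent.
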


\begin{corollary} For $N=[1,1,1]$ the Fourier coefficient $b([1,1,1])$
is given by
$$
\begin{cases}
a([3,3,3])+3^{k-2} 
\Sy^j
 \left(
\begin{smallmatrix}
3 & -1  \\
0 & 1 \\
\end{smallmatrix}
\right) 
a([1,3,3])  \, \, & {\rm{if}} \, p=3\\
a([p,p,p]) \, \, & {\rm{if}} \, p \not \equiv 1 \bmod 3\\
a([p,p,p]) 
+p^{k-2} 
\sum_{i=1}^{2}
\Sy^j
 \left(
\begin{smallmatrix}
p & -m_i  \\
0 & 1 \\
\end{smallmatrix}
\right) 
a([\frac{1+m_i+m_i^2}{p},1+2m_i, p])  \, \,&  {\rm{if}} \, p \equiv 1 \bmod 3
\end{cases}
$$
where in the latter case $m_1$ and $m_2$ are the two integers $0\leq m_i \leq p-1$ that reduce to the roots of the polynomial $1+X+X^2$ over $\FF_p$.
\end{corollary}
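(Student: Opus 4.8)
The plan is to specialize Proposition~\ref{ActionTp} to $N=[1,1,1]$ and then discard the summands that vanish for trivial reasons. Putting $n_1=n_{12}=n_2=1$, the first summand $p^{2k+j-3}a([\tfrac1p,\tfrac1p,\tfrac1p])$ is zero because $\tfrac1p\notin\ZZ$ for a prime $p$; likewise the third summand $p^{k+j-2}\Sy^j\left(\begin{smallmatrix}0 & 1/p \\ -1 & 0\end{smallmatrix}\right)a([\tfrac1p,-1,p])$ vanishes since its first entry $\tfrac1p$ is not integral. The second summand $a([p,p,p])$ survives unconditionally. Hence $b([1,1,1])$ equals $a([p,p,p])$ plus
$$
p^{k+j-2}\sum_{0\le m\le p-1}\Sy^j\left(\begin{smallmatrix}1 & -m/p \\ 0 & 1/p\end{smallmatrix}\right)a\left(\left[\tfrac{1+m+m^2}{p},\,1+2m,\,p\right]\right),
$$
and in this last sum the $m$-th term is nonzero only when $p\mid 1+m+m^2$, i.e.\ when $m$ reduces modulo $p$ to a root of $X^2+X+1\in\FF_p[X]$.

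Next I would split into the three cases according to the factorization of $X^2+X+1$ over $\FF_p$. Since any root $\omega\ne 1$ of $X^2+X+1$ satisfies $\omega^3=1$, such roots are precisely the elements of order $3$ in $\FF_p^{\times}$. If $p\equiv 2\bmod 3$ then $3\nmid p-1$, so there is no such element, the sum is empty, and $b([1,1,1])=a([p,p,p])$. If $p=3$ then $X^2+X+1=(X-1)^2$ in $\FF_3[X]$, so $m=1$ is the unique index in range; the surviving term is $3^{k+j-2}\Sy^j\left(\begin{smallmatrix}1 & -1/3 \\ 0 & 1/3\end{smallmatrix}\right)a([1,3,3])$, and applying $\Sy^j(\lambda M)=\lambda^j\Sy^j(M)$ to $\left(\begin{smallmatrix}1 & -1/3 \\ 0 & 1/3\end{smallmatrix}\right)=\tfrac13\left(\begin{smallmatrix}3 & -1 \\ 0 & 1\end{smallmatrix}\right)$ turns $3^{k+j-2}\cdot 3^{-j}$ into $3^{k-2}$, giving the first formula. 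If $p\equiv 1\bmod 3$ then $3\mid p-1$, so $\FF_p^{\times}$ contains the two primitive cube roots of unity, and they are distinct because the discriminant $-3$ of $X^2+X+1$ is nonzero modulo $p$; this yields exactly two indices $m_1,m_2\in\{0,\dots,p-1\}$, and the same rescaling $\left(\begin{smallmatrix}1 & -m_i/p \\ 0 & 1/p\end{smallmatrix}\right)=\tfrac1p\left(\begin{smallmatrix}p & -m_i \\ 0 & 1\end{smallmatrix}\right)$ absorbs $p^{k+j-2}\cdot p^{-j}$ into $p^{k-2}$ and produces the last case. (The middle case is to be read as $p\equiv 2\bmod 3$; the prime $p=3$ is handled by the first case.)

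I do not expect a real obstacle here: the argument is a direct substitution into Proposition~\ref{ActionTp} followed by the elementary count of the roots of $X^2+X+1$ modulo $p$. The only point that needs a little care is the bookkeeping of the scalar $\Sy^j$-factors, where one must use the homogeneity $\Sy^j(\lambda M)=\lambda^j\Sy^j(M)$ to reconcile the exponent $k+j-2$ coming from Proposition~\ref{ActionTp} with the exponent $k-2$ that appears in the statement of the corollary.
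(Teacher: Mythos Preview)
Your argument is correct and is exactly the intended derivation: the paper states this corollary without proof, as a direct specialization of Proposition~\ref{ActionTp}, and your computation fills in precisely those details. Your remark that the middle case should be read as $p\equiv 2\bmod 3$ (since $p=3$ is covered separately) is also apt.
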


\begin{corollary}
For $N=[1,0,1]$, the coefficient $b([1,0,1])$ is given by
\[
\begin{cases}
a([2,0,2])+2^{k-2} 
\Sy^j
 \left(
\begin{smallmatrix}
2 & -1  \\
0 & 1 \\
\end{smallmatrix}
\right) 
a([1,2,2])  \, \,&  {\rm{if}} \, p=2\\
a([p,0,p]) \, \,&  {\rm{if}} \, p \not \equiv 1 \bmod 4\\
a([p,0,p]) 
+p^{k-2} 
\Sy^j
 \left(
\begin{smallmatrix}
p & \pm m_0  \\
0 & 1 \\
\end{smallmatrix}
\right) 
a([\frac{1+m_0^2}{p},\mp 2m_0, p])  \, \,&  {\rm{if}} \, p \equiv 1 \bmod 4
\end{cases}
\]
where in the latter case $\pm m_0$  are the two roots of the polynomial $1+X^2$ over $\FF_p$.
\end{corollary}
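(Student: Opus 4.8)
The plan is to obtain the corollary by specializing Proposition~\ref{ActionTp} to $N=[1,0,1]$, that is $n_1=n_2=1$ and $n_{12}=0$, and then simplifying the resulting four terms. First I would observe that for a prime $p$ the matrices $[1/p,0,1/p]$ and $[1/p,0,p]$ are not integral, so by the convention in Proposition~\ref{ActionTp} the first term $p^{2k+j-3}a([1/p,0,1/p])$ and the third term $p^{k+j-2}\,\Sy^j\!\left(\begin{smallmatrix}0&1/p\\-1&0\end{smallmatrix}\right)a([1/p,0,p])$ both vanish. What remains is
\[
b([1,0,1])=a([p,0,p])+p^{k+j-2}\sum_{0\le m\le p-1}\Sy^j\!\left(\begin{smallmatrix}1&-m/p\\0&1/p\end{smallmatrix}\right)a\!\left(\left[\tfrac{1+m^2}{p},\,2m,\,p\right]\right),
\]
and a summand is nonzero only when $\tfrac{1+m^2}{p}\in\ZZ$, i.e.\ when $m^2\equiv-1\bmod p$.

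The set of such $m$ in $\{0,\dots,p-1\}$ is empty unless $-1$ is a square modulo $p$, i.e.\ unless $p=2$ or $p\equiv1\bmod4$; this already gives $b([1,0,1])=a([p,0,p])$ when $p$ is odd with $p\not\equiv1\bmod4$. For $p=2$ the only contributing value is $m=1$, and since $\left(\begin{smallmatrix}1&-1/2\\0&1/2\end{smallmatrix}\right)=\tfrac12\left(\begin{smallmatrix}2&-1\\0&1\end{smallmatrix}\right)$ and $\Sy^j$ is homogeneous of degree $j$, the prefactor $p^{k+j-2}\cdot2^{-j}$ collapses to $2^{k-2}$; together with $a([pn_1,pn_{12},pn_2])=a([2,0,2])$ this yields the first line of the corollary.

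For $p\equiv1\bmod4$ there are exactly two contributing values, which I would take to be $m_0$ and $p-m_0$, the lifts to $\{0,\dots,p-1\}$ of the two roots of $1+X^2$ over $\FF_p$. Factoring $\left(\begin{smallmatrix}1&-m_0/p\\0&1/p\end{smallmatrix}\right)=\tfrac1p\left(\begin{smallmatrix}p&-m_0\\0&1\end{smallmatrix}\right)$, the value $m=m_0$ contributes $p^{k-2}\,\Sy^j\!\left(\begin{smallmatrix}p&-m_0\\0&1\end{smallmatrix}\right)a\!\left(\left[\tfrac{1+m_0^2}{p},2m_0,p\right]\right)$. The value $m=p-m_0$ needs more care: its $a$-argument is $\left[\tfrac{1+m_0^2}{p}+p-2m_0,\,2p-2m_0,\,p\right]$, and I would use the ${\rm GL}(2,\ZZ)$-equivariance of the Fourier coefficients of a form of type $\Sy^j\otimes\det^k$, namely $a(VNV^t)=\det(V)^k\,\Sy^j(V)\,a(N)$, with $V=\left(\begin{smallmatrix}1&1\\0&1\end{smallmatrix}\right)$, to rewrite this coefficient as $\Sy^j\!\left(\begin{smallmatrix}1&1\\0&1\end{smallmatrix}\right)a\!\left(\left[\tfrac{1+m_0^2}{p},-2m_0,p\right]\right)$; combined with the identity $\left(\begin{smallmatrix}p&-(p-m_0)\\0&1\end{smallmatrix}\right)=\left(\begin{smallmatrix}p&m_0\\0&1\end{smallmatrix}\right)\left(\begin{smallmatrix}1&-1\\0&1\end{smallmatrix}\right)$, the two unipotent factors cancel and this value contributes $p^{k-2}\,\Sy^j\!\left(\begin{smallmatrix}p&m_0\\0&1\end{smallmatrix}\right)a\!\left(\left[\tfrac{1+m_0^2}{p},-2m_0,p\right]\right)$. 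Adding the two contributions gives the last line of the corollary, the signs $\pm,\mp$ recording precisely the two values $m=\mp m_0$. I expect the main obstacle to be exactly this bookkeeping for $p\equiv1\bmod4$: one must keep the ordering and sign conventions inside $\Sy^j$ straight and check that the ${\rm GL}(2,\ZZ)$-transformation and the matrix factorization are compatible in just the way needed for the two unipotent contributions to cancel rather than compound.
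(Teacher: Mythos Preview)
Your argument is correct and is exactly the intended derivation: the paper states this corollary without proof, immediately after Proposition~\ref{ActionTp} and the analogous corollary for $N=[1,1,1]$, so the implicit proof is precisely the specialization you carry out. Your handling of the case $p\equiv 1\bmod 4$ --- replacing the summand $m=p-m_0$ by its ${\rm GL}(2,\ZZ)$-equivalent with argument $[(1+m_0^2)/p,-2m_0,p]$ via $a(VNV^t)=\Sy^j(V)\,a(N)$ and the factorization $\left(\begin{smallmatrix}p&-(p-m_0)\\0&1\end{smallmatrix}\right)=\left(\begin{smallmatrix}p&m_0\\0&1\end{smallmatrix}\right)\left(\begin{smallmatrix}1&-1\\0&1\end{smallmatrix}\right)$ --- is the right way to reconcile the actual summation range $0\le m\le p-1$ with the paper's shorthand ``$\pm m_0$''.
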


With these corollaries we can calculate eigenvalues $\lambda_p$ of an
eigenform $F\in M_{j,k}(\Gamma_2)$ as follows. If $p\neq 3$ and 
$p\not\equiv 1 (\bmod 3)$ and the $i$th coordinate of the vector 
$a([1,1,1])$ does not vanish then we can take 
$\lambda_p=a([p,p,p])_i/a([1,1,1])_i$,
and similarly, for $p\neq 2$ and $p\not\equiv 1 (\bmod 4)$ we can take
$\lambda_p= a([p,p,p])_i/a([1,0,1])_i$ provided that the $i$th coordinate
of $a([1,0,1])$ does not vanish. Moreover, 
if the last component of $a([1,1,1])$ is not zero and
$p=3$ or $p\equiv 1 (\bmod 3)$ we can use only this component to get
$\lambda_p$ since
$\Sy^j \left(
\begin{smallmatrix}
p & *  \\
0 & 1 \\
\end{smallmatrix}
\right)$
is upper triangular. Similarly,
if the last component of $a([1,0,1])$ is not zero and 
$p=2$ or $p \equiv 1 \bmod 4$, we can use only this component in order to
compute $\lambda_p$ for the same reason as before.

\subsection{The Hecke operator $T_{p^2}$}
The Hecke operator $T_{p^2}$ 
is defined via the double coset
$$
T_{p^2}=
\Gamma_2
 \left(
\begin{smallmatrix}
p & 0 & 0 & 0 \\
0 & p & 0 & 0 \\
0 & 0 & p & 0 \\
0 & 0 & 0 & p
\end{smallmatrix}
\right)
 \Gamma_2
 +
\Gamma_2
 \left(
\begin{smallmatrix}
1 & 0 & 0 & 0 \\
0 & p & 0 & 0 \\
0 & 0 & p^2 & 0 \\
0 & 0 & 0 & p
\end{smallmatrix}
\right)
 \Gamma_2
 +
\Gamma_2
 \left(
\begin{smallmatrix}
1 & 0 & 0 & 0 \\
0 & 1 & 0 & 0 \\
0 & 0 & p^2 & 0 \\
0 & 0 & 0 & p^2
\end{smallmatrix}
\right)
 \Gamma_2\\
$$

\begin{proposition}
The Hecke operator $T_{p^2}$ 
has degree $p^6+p^5+2p^4+2p^3+p^2+p+1$ and 
admits the following left coset decomposition
\begin{tiny}
\begin{align*}
&
\Gamma_2 
\left(
\begin{smallmatrix}
p^2 & 0 & 0 & 0 \\
0 & p^2 & 0 & 0 \\
0 & 0 & 1 & 0 \\
0 & 0 & 0 & 1
\end{smallmatrix}
\right) +
\sum_{0 \leq a,b,c \leq p-1}
\Gamma_2 
\left(
\begin{smallmatrix}
p & 0 & a & b \\
0 & p & b & c \\
0 & 0 & p & 0 \\
0 & 0 & 0 & p
\end{smallmatrix}
\right) +
\sum_{0 \leq a,b,c \leq p^2-1}
\Gamma_2 
\left(
\begin{smallmatrix}
1 & 0 & a & b \\
0 & 1 & b & c \\
0 & 0 & p^2 & 0 \\
0 & 0 & 0 & p^2
\end{smallmatrix}
\right) \\&
+
\sum_{0 \leq a \leq p-1 }
\Gamma_2 
\left(
\begin{smallmatrix}
p & 0 & a & 0 \\
0 & p^2 & 0 & 0 \\
0 & 0 & p & 0 \\
0 & 0 & 0 & 1
\end{smallmatrix}
\right)
+
\sum_{0 \leq a, m \leq p-1 }
\Gamma_2 
\left(
\begin{smallmatrix}
p^2 & 0 & 0 & 0 \\
-mp & p & 0 & a \\
0 & 0 & 1 & m \\
0 & 0 & 0 & p
\end{smallmatrix}
\right)
+
\sum_{\substack{ 0 \leq a, b \leq p-1 \\ 0 \leq c \leq p^2-1}}
\Gamma_2 
\left(
\begin{smallmatrix}
1 & 0 & c & -b \\
0 & p & -pb & a \\
0 & 0 & p^2 & 0 \\
0 & 0 & 0 & p
\end{smallmatrix}
\right) \\
&+
\sum_{\substack{ 0 \leq a, b, m \leq p-1 \\ 0 \leq c \leq p^2-1}}
\Gamma_2 
\left(
\begin{smallmatrix}
p & 0 & a & am+bp \\
-m & 1 & b & bm+c \\
0 & 0 & p & pm \\
0 & 0 & 0 & p^2
\end{smallmatrix}
\right)
+
\sum_{\substack{0 \leq a, m \leq p^2-1 }}
\Gamma_2 
\left(
\begin{smallmatrix}
p^2 & 0 & 0 & 0 \\
-m & 1 & 0 & a \\
0 & 0 & 1 & m \\
0 & 0 & 0 & p^2
\end{smallmatrix}
\right)
+
\sum_{\substack{0 \leq a \leq p^2-1 \\ 0 \leq n \leq p-1 }}
\Gamma_2 
\left(
\begin{smallmatrix}
1 & np & a & 0 \\
0 & p^2 & 0 & 0 \\
0 & 0 & p^2 & 0 \\
0 & 0 & -np & 1
\end{smallmatrix}
\right).
\end{align*}
\end{tiny}
\end{proposition}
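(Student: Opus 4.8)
The plan is to run the same reduction that produced the left coset decomposition of $T_p$ displayed above, following Andrianov \cite{Andrianov}. Recall that $T_{p^2}$ was introduced as the sum of the three double cosets $\Gamma_2 D_1\Gamma_2+\Gamma_2 D_2\Gamma_2+\Gamma_2 D_3\Gamma_2$ with $D_1=\mathrm{diag}(p,p,p,p)$, $D_2=\mathrm{diag}(1,p,p^2,p)$ and $D_3=\mathrm{diag}(1,1,p^2,p^2)$; these exhaust the integral symplectic similitude matrices of similitude $p^2$ up to $\Gamma_2$-equivalence on both sides (equivalently, the three possible sequences of elementary divisors). The key input is the standard normal form: since the lower-left block of such an $M$ can always be cleared by a left $\Gamma_2$-translation, every left coset occurring in $T_{p^2}$ has a unique representative of the shape
$$
\left(\begin{matrix} A & B \\ 0 & D\end{matrix}\right),\qquad A^{t}D=p^{2}\,1_2,\qquad B^{t}D=D^{t}B,
$$
where $A$ is integral, upper triangular, with positive diagonal entries and with $p^2 A^{-1}$ integral, so that $D:=p^2(A^{t})^{-1}$ is integral and is determined by $A$; and where, for each such $A$, the block $B$ runs over a fixed set of representatives of the finite quotient
$$
\{\,B\in M_2(\ZZ)\ :\ B^{t}D \text{ symmetric}\,\}\,/\,\mathrm{Sym}_2(\ZZ)\,D .
$$
The admissible $A$ fall into the six elementary-divisor types $(1,1)$, $(1,p)$, $(1,p^2)$, $(p,p)$, $(p,p^2)$, $(p^2,p^2)$, and in each type there are only finitely many Hermite normal forms. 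The degree of $T_{p^2}$ is then the total number of pairs $(A,B)$.

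So the plan is: first enumerate the admissible Hermite forms $A$ — the diagonal ones $\mathrm{diag}(p^i,p^j)$ with $0\le i,j\le 2$ together with the non-diagonal Hermite forms such as $\left(\begin{smallmatrix} p & s \\ 0 & p\end{smallmatrix}\right)$ with $p\nmid s$ and $\left(\begin{smallmatrix} 1 & s \\ 0 & p^2\end{smallmatrix}\right)$ with $0\le s<p^2$ — then, for each $A$, solve the linear symmetry condition $B^{t}D=D^{t}B$ and count the classes of $B$ modulo $\mathrm{Sym}_2(\ZZ)\,D$, each count being an explicit power of $p$; and finally bring each resulting representative into the shape displayed in the statement by a suitable left $\Gamma_2$-translation. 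Collecting and regrouping the contributions should produce exactly the nine displayed families: e.g.\ $A=p^2\,1_2$ with $B=0$ gives the first representative $\mathrm{diag}(p^2,p^2,1,1)$; $A=p\,1_2$ with $B$ symmetric modulo $p$ gives the $p^3$ representatives of the second sum (whose $B=0$ member is the single coset $\Gamma_2\,p\,1_4$ that exhausts the central double coset $\Gamma_2 D_1\Gamma_2$); $A=1_2$ with $B$ symmetric modulo $p^2$ gives the $p^6$ representatives of the third sum; and the remaining six families come from the mixed $A$. Adding the cardinalities $1+p^3+p^6+p+p^2+p^4+p^5+p^4+p^3=p^6+p^5+2p^4+2p^3+p^2+p+1$ then yields both the claimed degree and a consistency check on the decomposition.

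I expect the main obstacle to be the bookkeeping rather than any single idea. One must (i) determine, for each of the six types of $A$, precisely which integral $B$ satisfy $B^{t}D=D^{t}B$ and the precise sublattice $\mathrm{Sym}_2(\ZZ)\,D$ modulo which $B$ is reduced, since the resulting count depends delicately on whether $A$ is scalar, has equal or distinct non-unit elementary divisors, and so on; and (ii) exhibit the explicit elements of $\Gamma_2$ that transport the canonical representatives onto the particular matrices written in the statement, and verify that no two of the displayed matrices lie in the same left $\Gamma_2$-coset while every left coset of $T_{p^2}$ is reached. A more pedestrian route, which literally reduces the statement to the previous proposition, is to multiply the $T_p$-decomposition by itself: form the $(p^3+p^2+p+1)^2$ products $\sigma_i\sigma_j$ of the representatives found there, sort them according to their elementary divisors, and collapse them modulo $\Gamma_2$ on the left; this needs no new normal-form input but is at least as laborious.
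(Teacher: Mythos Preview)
The paper does not supply a proof of this proposition; it is stated in the appendix without argument, Andrianov \cite{Andrianov} being cited at the outset as the basic reference for the whole section. Your plan is precisely the standard reduction found there: pass to upper-triangular block representatives $\left(\begin{smallmatrix}A & B\\ 0 & D\end{smallmatrix}\right)$ with $A^{t}D=p^{2}1_2$ and $B^{t}D$ symmetric, enumerate the finitely many Hermite forms for $A$, count the $B$-classes, and match with the nine displayed families. The cardinality check $1+p^3+p^6+p+p^2+p^4+p^5+p^4+p^3=p^6+p^5+2p^4+2p^3+p^2+p+1$ is correct and matches the sizes of the nine sums in the statement. So your approach is essentially what the paper implicitly defers to Andrianov for, and is sound; the only real work, as you say, is the bookkeeping in step~(ii), since several of the displayed representatives (families 5, 7, 8) have lower-triangular rather than upper-triangular $A$-block and must be matched to your Hermite list via an explicit left $\Gamma_2$-move.
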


We now consider the action on a modular form $F \in M_{j,k}(\Gamma_2)$
defined by
$$
F|_{j,k}T_{p^2}= p^{4k+2j-6} \sum_i F|_{j,k} \sigma_i
$$
where the sum is over the left coset representatives. 
If $F=\sum_{N\geq 0} a(N) e^{2\pi i {\rm Tr}(N\tau)}$ 
the result is a modular
form with Fourier expansion 
$\sum_{N\geq 0} c(N) e^{2 \pi i {\rm Tr} (N \tau)}$
with the $c(N)=a_{p^2}(N)$ expressed in the $a(N)$ as follows.

\begin{proposition}\label{ActionTpsquare}
Let $F\in M_{j,k}(\Gamma_2)$ with Fourier expansion
\[
F(\tau)=
\sum_{N\geqslant 0}
a(N) e^{2 \pi i {\rm{Tr}}(N\tau)}
\quad
{\rm{and \, write}}
\quad
F\vert_{j,k}\, T_{p^2}(\tau)=
\sum_{N\geqslant 0}
a_{p^2}(N) e^{2 \pi i {\rm{Tr}}(N\tau)}.
\]
Then we have ($N=[n_1,n_{12},n_2]$)
\begin{align*}
a_{p^2}(N)=&
a(p^2N)+p^{4k+2j-6}a(\frac{N}{p^2})+p^{2k+j-3}a(N)\delta_p(n_1, n_{12}, n_2)\\
&+
p^{3k+j-5}
\Sy^j\left(\begin{smallmatrix}1 & 0 \\0 & p\end{smallmatrix}\right)
a([n_1,n_{12}/p,n_2/p^2])\delta_p(n_1)\\
&+
p^{3k+j-5}
\sum_{0 \leq m \leq p-1}
\Sy^j\left(\begin{smallmatrix} p & -m \\ 0 & 1 \end{smallmatrix}\right)
a([\frac{n_1+n_{12}m+n_2m^2}{p^2},\frac{n_{12}+2n_2m}{p},n_2])\delta_p(n_2)\\
&+
p^{k-2}
\Sy^j\left(\begin{smallmatrix}1 & 0 \\0 & p\end{smallmatrix}\right)
a([p^2n_1,pn_{12},n_2])\delta_p(n_2)\\
&+
p^{k-2}
\sum_{\substack{0 \leq m \leq p-1\\ n_1+n_{12}m+n_2m^2 \equiv 0 \bmod p}}
\hspace{-15pt}
\Sy^j\left(\begin{smallmatrix} p & -m \\0 & 1\end{smallmatrix}\right)
a([n_1+n_{12}m+n_2m^2 ,p(n_{12}+2n_2m),p^2n_2])\\
&+
p^{2k-4}
\sum_{0 \leq m \leq p^2-1}
\Sy^j\left(\begin{smallmatrix} p^2 & -m \\0 & 1\end{smallmatrix}\right)
a([\frac{n_1+n_{12}m+n_2m^2}{p^2},n_{12}+2n_2m,p^2n_2])\\
&+
p^{2k-4}
\sum_{0 \leq n \leq p-1}
\Sy^j\left(\begin{smallmatrix} 1 & 0 \\ np & p^2 \end{smallmatrix}\right)
a([p^2n_1,n_{12}-2pn_1n,\frac{n_1p^2n^2-n_{12}np+n_2}{p^2} ])
\end{align*}
where $a([n_1,n_{12},n_2])=0$ if $n_1, n_{12}$ and $n_2$ are not all integral.
\end{proposition}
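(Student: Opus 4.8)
The plan is to feed each of the left coset representatives $\sigma$ from the previous proposition into the slash action and collect the resulting contributions to the Fourier expansion. Write $\sigma=\left(\begin{smallmatrix} A & B\\ C & D\end{smallmatrix}\right)$, a matrix of similitude $p^2$. Every representative except those in the final family (indexed by $0\le a\le p^2-1$ and $0\le n\le p-1$) has $C=0$; for such $\sigma$ the symplectic relations give $A^tD=p^2\,1_2$, hence $D^{-1}=p^{-2}A^t$, and $(F\vert_{j,k}\sigma)(\tau)=\det(D)^{-k}\,\Sy^j(D^{-1})\,F\bigl((A\tau+B)D^{-1}\bigr)$. Substituting $F=\sum_N a(N)e^{2\pi i\,{\rm Tr}(N\tau)}$ and using ${\rm Tr}\bigl(N(A\tau+B)D^{-1}\bigr)={\rm Tr}\bigl(p^{-2}A^tNA\,\tau\bigr)+{\rm Tr}(D^{-1}NB)$, I would reindex the lattice sum by $M=p^{-2}A^tNA$ (a symmetric half-integral matrix, since $N$ is), so that the contribution of $\sigma$ to the coefficient of $e^{2\pi i\,{\rm Tr}(M\tau)}$ equals $\det(D)^{-k}\Sy^j(D^{-1})\,a(p^2A^{-t}MA^{-1})$ multiplied by the finite exponential sum $\sum_B e^{2\pi i\,{\rm Tr}(D^{-1}NB)}$ over the $B$'s in that family.

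First I would treat the ``diagonal'' families, where $A$ and $D$ are scalar or diagonal. Here the reindexing turns $a(N)$ into $a(p^2N)$, $a(N/p^2)$, $a([n_1,n_{12}/p,n_2/p^2])$, and so on; the sum over $B$ is a product of geometric sums equal to a power of $p$ times the indicator that prescribed coordinates of $N$ are divisible by $p$ — this produces exactly the factors $\delta_p(n_1,n_{12},n_2)$, $\delta_p(n_1)$, $\delta_p(n_2)$ — and the accompanying power of $p$ is read off as $p^{4k+2j-6}$ (the normalization) times $\det(D)^{-k}$ times the power of $p$ extracted from $\Sy^j(D^{-1})$ times the size of the $B$-sum. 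For instance, the family $\left(\begin{smallmatrix} p\,1_2 & B\\ 0 & p\,1_2\end{smallmatrix}\right)$ with $B$ running mod $p$ gives $p^{4k+2j-6}\cdot p^{-2k}\cdot p^{-j}\cdot p^{3}\cdot a(N)\,\delta_p(n_1,n_{12},n_2)=p^{2k+j-3}a(N)\,\delta_p(n_1,n_{12},n_2)$, which is the third term of the statement. The families carrying a free parameter $m$ (or $n$) contribute the sums over $m$ (or $n$): there $D^{-1}$ is, up to a positive power of $p$, of the shape $\left(\begin{smallmatrix} p & -m\\ 0 & 1\end{smallmatrix}\right)$ or $\left(\begin{smallmatrix} 1 & 0\\ 0 & p\end{smallmatrix}\right)$, which is precisely the $\Sy^j$-matrix appearing in the formula, while the reindexing $N\mapsto p^2A^{-t}MA^{-1}$ produces the arguments $[(n_1+n_{12}m+n_2m^2)/p,\ldots]$ together with the congruence restrictions coming from the residual exponential sum over the remaining parameters.

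For the final family, whose representatives are not block upper triangular, I would first left-multiply each representative by the element of $\Gamma_2$ coming from $\left(\begin{smallmatrix} 0 & 1\\ -1 & 0\end{smallmatrix}\right)\in{\rm SL}_2(\ZZ)$ embedded in the second coordinate (composed with a unipotent correction), so as to clear the lower-left block; the $\Gamma_2$-automorphy factor thereby introduced, together with $\det(D)^{-k}\Sy^j(D^{-1})$ for the resulting upper-triangular representative, assembles into the factor $p^{2k-4}\,\Sy^j\left(\begin{smallmatrix} 1 & 0\\ np & p^2\end{smallmatrix}\right)$, and the change of variables becomes the congruence action of $\left(\begin{smallmatrix} 1 & 0\\ -np & 1\end{smallmatrix}\right)$ combined with the scaling by $p^2$, giving the argument $[\,p^2n_1,\ n_{12}-2pn_1n,\ (n_1p^2n^2-n_{12}np+n_2)/p^2\,]$ in the last line. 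Assembling the contributions of all nine families then yields the displayed formula, with conventions matching those of Andrianov \cite{Andrianov}; a cross-check with Proposition~\ref{ActionTp} (applying $T_p$ twice and using the Hecke relations) can be used to confirm the signs and the normalizing powers of $p$.

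The main obstacle is organizational rather than conceptual. There are nine families, each carrying one to three inner parameters, and one must keep exact track of (i) the precise $\Sy^j$-matrix and the side on which it acts; (ii) the exponent of $p$ arising from the normalization $p^{4k+2j-6}$, from $\det(D)^{-k}$ and $\Sy^j(D^{-1})$, and from the sizes of the geometric and Gauss sums; and (iii) every divisibility condition $\delta_p(\cdot)$ as well as the congruence $n_1+n_{12}m+n_2m^2\equiv 0\ (\bmod p)$ that cuts down one of the inner sums. Bringing the non-triangular family into the stated shape — in particular pinning down the correct $\Gamma_2$-element to conjugate by — is the one step that calls for genuine care rather than routine bookkeeping.
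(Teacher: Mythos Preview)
Your approach is correct and is exactly the computation the paper has in mind: the proposition is stated without proof, as a direct consequence of inserting the coset representatives from the preceding left coset decomposition into the slash operator and reading off the Fourier coefficients. Nothing more is required.

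One small correction, though: you say that the final family (indexed by $0\le a\le p^2-1$, $0\le n\le p-1$) has $C\neq 0$ and therefore needs special handling via a $\Gamma_2$-conjugation. In fact it does not. The representative
\[
\left(\begin{smallmatrix}
1 & np & a & 0 \\
0 & p^2 & 0 & 0 \\
0 & 0 & p^2 & 0 \\
0 & 0 & -np & 1
\end{smallmatrix}\right)
\]
has lower-left block $C=0$; the entry $-np$ sits in the $D$-block, which is $D=\left(\begin{smallmatrix} p^2 & 0 \\ -np & 1\end{smallmatrix}\right)$. One checks $A^tD=p^2\cdot 1_2$ as for the other families, so $D^{-1}=p^{-2}\left(\begin{smallmatrix} 1 & 0 \\ np & p^2\end{smallmatrix}\right)$, and the same routine (reindex by $M=p^{-2}A^tNA$, sum the exponential over $a$) produces directly the last line of the formula with the factor $p^{4k+2j-6}\cdot p^{-2k}\cdot p^{-2j}\cdot p^{2}=p^{2k-4}$. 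No auxiliary $\Gamma_2$-element is needed; all nine families are block upper triangular and are treated uniformly.
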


From this proposition, we deduce the following corollary:
\begin{corollary}\label{coro}
Let $F\in M_{j,k}(\Gamma_2)$ be a Hecke eigenform with eigenvalue 
$\lambda_{p^2}$ at $p^2$.
Assume that $a([1,1,1])\neq 0$.
Then $\lambda_{p^2}(F)a([1,1,1])$ equals
\begin{align*}
 a(p^2N)+ &
p^{k-2}
\sum_{\substack{0 \leq m \leq p-1\\ 1+m+m^2 \equiv 0 \bmod p}}
\hspace{-15pt}
\Sy^j\left(\begin{smallmatrix} p & -m \\0 & 1\end{smallmatrix}\right)
a([1+m+m^2 ,p(1+2m),p^2])\\
&+
p^{2k-4}
\sum_{0 \leq m \leq p^2-1}
\Sy^j\left(\begin{smallmatrix} p^2 & -m \\0 & 1\end{smallmatrix}\right)
a([\frac{1+m+m^2}{p^2},1+2m,p^2]).
\end{align*}
\end{corollary}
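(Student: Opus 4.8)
The plan is to obtain Corollary~\ref{coro} as a direct specialization of Proposition~\ref{ActionTpsquare} to the matrix $N=[1,1,1]$, using the eigenform hypothesis only at the very end. First I would record the elementary observation that if $F\in M_{j,k}(\Gamma_2)$ is a Hecke eigenform for $T_{p^2}$ with eigenvalue $\lambda_{p^2}=\lambda_{p^2}(F)$, then $F\vert_{j,k}T_{p^2}=\lambda_{p^2}F$, so comparing the two Fourier expansions appearing in Proposition~\ref{ActionTpsquare} yields $a_{p^2}(N)=\lambda_{p^2}\,a(N)$ for every $N$; in particular $\lambda_{p^2}\,a([1,1,1])=a_{p^2}([1,1,1])$.

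Next I would substitute $n_1=n_{12}=n_2=1$ into the nine-term formula of Proposition~\ref{ActionTpsquare} and discard the vanishing contributions. The summands carrying a factor $\delta_p(n_1)$, $\delta_p(n_2)$, or $\delta_p(n_1,n_{12},n_2)$ all drop out because $p\nmid 1$; this kills the term $p^{2k+j-3}a(N)\delta_p(n_1,n_{12},n_2)$, both terms with prefactor $p^{3k+j-5}$, and the term $p^{k-2}\,\Sy^j\bigl(\begin{smallmatrix}1&0\\0&p\end{smallmatrix}\bigr)a([p^2n_1,pn_{12},n_2])\,\delta_p(n_2)$. The term $p^{4k+2j-6}a(N/p^2)$ vanishes since $a([1/p^2,1/p,1/p^2])=0$ by the convention that a Fourier coefficient at a non-integral argument is zero. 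Finally the last summand $p^{2k-4}\sum_{0\leq n\leq p-1}\Sy^j\bigl(\begin{smallmatrix}1&0\\np&p^2\end{smallmatrix}\bigr)a([p^2,\,1-2pn,\,(p^2n^2-np+1)/p^2])$ also vanishes, because its third entry is never integral: $p^2n^2-np+1\equiv 1\pmod p$.

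What then remains is exactly $a(p^2N)$ together with $p^{k-2}\sum_{\substack{0\leq m\leq p-1\\ 1+m+m^2\equiv 0\bmod p}}\Sy^j\bigl(\begin{smallmatrix}p&-m\\0&1\end{smallmatrix}\bigr)a([1+m+m^2,\,p(1+2m),\,p^2])$ and $p^{2k-4}\sum_{0\leq m\leq p^2-1}\Sy^j\bigl(\begin{smallmatrix}p^2&-m\\0&1\end{smallmatrix}\bigr)a([(1+m+m^2)/p^2,\,1+2m,\,p^2])$, which is precisely the right-hand side of the Corollary; combined with $\lambda_{p^2}\,a([1,1,1])=a_{p^2}([1,1,1])$ this completes the argument. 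There is no genuine obstacle beyond careful bookkeeping; the one point deserving attention is the integrality analysis of the previous paragraph, i.e.\ verifying which of the bracketed arguments can actually be integral and confirming that each $\delta_p$-factor vanishes at $N=[1,1,1]$ — everything else is a mechanical transcription of Proposition~\ref{ActionTpsquare}.
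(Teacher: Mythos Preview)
Your proposal is correct and follows exactly the approach the paper intends: the paper simply states that the corollary is deduced from Proposition~\ref{ActionTpsquare}, and you have carried out that specialization to $N=[1,1,1]$ in full detail, correctly identifying which terms survive and why the remaining ones vanish by the integrality and $\delta_p$-conditions.
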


\end{section}
\begin{section}{Appendix: Hecke Operators for Vector valued Modular forms of 
Degree Three}

Here we treat only the case of the Hecke operator $T_p$ for $p$ a prime.
Here $T_p$ is the double coset $\Gamma_3 \, {\rm diag}(1,1,1,p,p,p)\Gamma_3$.

\begin{proposition}
We have $\deg(T_p)=p^6+p^5+p^4+2\, p^3+p^2+p+1$ and there is the following
left coset decomposition.
\begin{align*}
&
\Gamma_3
\left(
\begin{smallmatrix}
p & 0 & 0 & 0 & 0 & 0 \\
0 & p & 0 & 0 & 0 & 0 \\
0 & 0 & p & 0 & 0 & 0 \\
0 & 0 & 0 & 1 & 0 & 0 \\
0 & 0 & 0 & 0 & 1 & 0 \\
0 & 0 & 0 & 0 & 0 & 1 \\
\end{smallmatrix}
\right)
+
\sum_{0 \leq a, \dots, f \leq p-1}
\Gamma_3 
\left(
\begin{smallmatrix}
1 & 0 & 0 & a & b & c \\
0 & 1 & 0 & b & d & e \\
0 & 0 & 1 & c & e & f \\
0 & 0 & 0 & p & 0 & 0 \\
0 & 0 & 0 & 0 & p & 0 \\
0 & 0 & 0 & 0 & 0 & p \\
\end{smallmatrix}
\right)
+
\sum_{0 \leq a,u,v  \leq p-1}
\Gamma_3
\left(
\begin{smallmatrix}
p & 0 & 0 & 0 & 0 & 0 \\
0 & p & 0 & 0 & 0 & 0 \\
-u & -v & 1 & 0 & 0 & a \\
0 & 0 & 0 & 1 & 0 & u \\
0 & 0 & 0 & 0 & 1 & v \\
0 & 0 & 0 & 0 & 0 & p \\
\end{smallmatrix}
\right)\\
&+
\sum_{0 \leq a,u \leq p-1}
\Gamma_3
\left(
\begin{smallmatrix}
p & 0 & 0 & 0 & 0 & 0 \\
0 & 0 & p & 0 & 0 & 0 \\
-u & 1 & 0 & 0 & a & 0 \\
0 & 0 & 0 & 1 & u & 0 \\
0 & 0 & 0 & 0 & 0 & 1 \\
0 & 0 & 0 & 0 & p & 0 \\
\end{smallmatrix}
\right)
+
\sum_{0 \leq a \leq p-1}
\Gamma_3
\left(
\begin{smallmatrix}
0 & 0 & p & 0 & 0 & 0 \\
0 & p & 0 & 0 & 0 & 0 \\
1 & 0 & 0 & a & 0 & 0 \\
0 & 0 & 0 & 0 & 0 & 1 \\
0 & 0 & 0 & 0 & 1 & 0 \\
0 & 0 & 0 & p & 0 & 0 \\
\end{smallmatrix}
\right)
+
\sum_{0 \leq a,b,c,u,v  \leq p-1}
\Gamma_3
\left(
\begin{smallmatrix}
p & 0 & 0 & 0 & 0 & 0 \\
-u & 1 & 0 & 0 & a & b \\
-v & 0 & 1 & 0 & b & c \\
0 & 0 & 0 & 1 & u & v \\
0 & 0 & 0 & 0 & p & 0 \\
0 & 0 & 0 & 0 & 0 & p \\
\end{smallmatrix}
\right) \\
&+
\sum_{0 \leq a,b,c,u  \leq p-1}
\Gamma_3
\left(
\begin{smallmatrix}
0 & p & 0 & 0 & 0 & 0 \\
1 & 0 & 0 & a & 0 & b \\
0 & -u & 1 & b & 0 & c \\
0 & 0 & 0 & 0 & 1 & u \\
0 & 0 & 0 & p & 0 & 0 \\
0 & 0 & 0 & 0 & 0 & p \\
\end{smallmatrix}
\right)
+
\sum_{0 \leq a,b,c  \leq p-1}
\Gamma_3
\left(
\begin{smallmatrix}
0 & 0 & p & 0 & 0 & 0 \\
0 & 1 & 0 & a & b & 0 \\
1 & 0 & 0 & c & a & 0 \\
0 & 0 & 0 & 0 & 0 & 1 \\
0 & 0 & 0 & 0 & p & 0 \\
0 & 0 & 0 & p & 0 & 0
\end{smallmatrix}
\right)
\end{align*}
\end{proposition}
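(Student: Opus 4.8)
The plan is to run the standard theory of Hecke operators on the symplectic group, following Andrianov \cite{Andrianov}, in the case $g=3$. First I would describe $T_p$ set-theoretically. With $J=\left(\begin{smallmatrix}0&1_3\\-1_3&0\end{smallmatrix}\right)$, the cosets in $T_p$ are exactly the $\Gamma_3 M$ with $M$ an integral $6\times6$ matrix satisfying ${}^tM\,J\,M=p\,J$; since $p$ is prime any such $M$ has elementary divisors $(1,1,1,p,p,p)$, so $T_p$ really is one double coset. For such an $M$ one has $p\,M^{-1}=J^{-1}\,{}^tM\,J$, which is again integral and satisfies the same relation, and reduction modulo $p$ carries $p\,M^{-1}\ZZ^6$ (which contains $p\,\ZZ^6$ because $M$ is integral) to a $3$-dimensional isotropic, hence Lagrangian, subspace $\Lambda_M$ of the symplectic space $\FF_p^6=\ZZ^6/p\,\ZZ^6$. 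The assignment $\Gamma_3 M\mapsto\Lambda_M$ is a bijection of $\Gamma_3\backslash T_p$ onto the set of Lagrangian subspaces of $\FF_p^6$, so by the standard count of Lagrangians $\deg(T_p)=\prod_{i=1}^{3}(p^i+1)=(1+p)(1+p^2)(1+p^3)=p^6+p^5+p^4+2p^3+p^2+p+1$, and the eight monomials of this product correspond to the eight sums in the statement, of sizes $1$, $p^6$, $p^3$, $p^2$, $p$, $p^5$, $p^4$, $p^3$.

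To write down explicit representatives I would use that $\Gamma_3$ acts transitively on primitive isotropic rank-$3$ sublattices of $\ZZ^6$, so every coset in $T_p$ has a block-upper-triangular representative $\left(\begin{smallmatrix}A&B\\0&D\end{smallmatrix}\right)$ with $D=p\,{}^tA^{-1}$, where $A$ is integral with $p\,{}^tA^{-1}$ integral (equivalently, the elementary divisors of $A$ lie in $\{1,p\}$), $A^{-1}B$ is symmetric, and $B$ is taken modulo $\{V D:V\in\mathrm{Sym}_3(\ZZ)\}$. Running Andrianov's recursive construction --- which produces the factor $\prod_{i=1}^{3}(1+p^i)$ by choosing at the $i$-th stage a contribution $1$ or $p^i$ --- turns these block-triangular representatives into the eight displayed families; the representatives in the last six sums acquire their nonzero lower-left blocks because one left-multiplies by a suitable element of $\Gamma_3$ to reach Andrianov's normalized shape.

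It then remains to verify the list directly: (i) each displayed matrix $\sigma$ is integral and satisfies ${}^t\sigma\,J\,\sigma=p\,J$ --- a routine block computation --- so $\Gamma_3\sigma\subseteq T_p$; (ii) the displayed cosets are pairwise distinct, which one sees by computing the associated Lagrangian $\Lambda_\sigma$ (or by reducing $\sigma$ to its block-upper-triangular Hermite form) and checking that distinct parameter values give distinct $\Lambda_\sigma$; and (iii) the number of displayed cosets equals $p^6+p^5+p^4+2p^3+p^2+p+1=\deg(T_p)$. Points (i)--(iii) together force the list to be a complete and irredundant system of left coset representatives, and in fact re-prove the degree formula.

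The hard part is purely organizational: keeping the recursive construction of representatives synchronized with the exact matrices in the statement, carrying the count of admissible $B$ (equivalently, the distinctness check) through all eight families, and --- the usual pitfall here --- keeping the conventions consistent throughout, namely the choice of $J$, left versus right cosets, and the normalization of Hermite normal forms.
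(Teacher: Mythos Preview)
Your outline is correct and is exactly the standard Andrianov approach; the bijection between left cosets in $T_p$ and Lagrangian subspaces of $\FF_p^6$ gives the degree $\prod_{i=1}^3(p^i+1)$, and the block-upper-triangular normalization together with the three checks (i)--(iii) yields the displayed list. The paper itself gives no proof of this proposition: it is stated in the appendix as a known result, with Andrianov \cite{Andrianov,AndrianovBook} as the basic reference, so your proposal is in fact more detailed than what the paper provides.
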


We are now able to describe the action of the operator $T_p$ on the Fourier expansion of a modular form
on $\Gamma_3$. Let $F\in M_{i,j,k}(\Gamma_3)$ with Fourier expansion
$
F(\tau)=\sum_{N\geq 0} a(N)e^{2 \pi i {\rm{Tr}}(N\tau)}.
$
The action of the operator $T_p$ on $F$ reads as follows:
\[
F\vert_{i,j,k}\, T_p=
p^{i+2j+3k-6}
\sum_{i}
F\vert_{i,j,k}\, \sigma_i
\]
where for
$
 \sigma_i=
 \left(
\begin{smallmatrix}
a_i & b_i  \\
c_i & d_i \\
\end{smallmatrix}
\right)
$,
the slash operator is given by:
\[
(F\vert_{i,j,k}\, \sigma_i)(\tau)=
\det(c_i\tau+d_i)^{-k}
\Sy^i ((c_i\tau+d_i)^{-1})
\Sy^j (\wedge^2((c_i\tau+d_i)^{-1})))
F((a_i\tau+b_i)(c_i\tau+d_i)^{-1})
\]
for every $\tau\in \frak{H}_3$.  Let us write
\[
(F\vert_{i,j,k}\, T_p)(\tau)=
\sum_{N\geqslant 0}
a_{p}(N) e^{2 \pi i {\rm{Tr}}(N\tau)}.
\]
We will use two kinds of notations for $
N=
\left(
\begin{smallmatrix}
n_1 & n_{12}/2 & n_{13}/2  \\
n_{12}/2  & n_2 & n_{23}/2   \\
n_{13}/2  & n_{23}/2  & n_3 
\end{smallmatrix}
\right)
$,
namely
\[
N=
\left(
\begin{smallmatrix}
n_1 & n_{12}/2 & n_{13}/2  \\
n_{12}/2  & n_2 & n_{23}/2   \\
n_{13}/2  & n_{23}/2  & n_3 
\end{smallmatrix}
\right)
\leftrightarrow
[n_1\, n_2\, n_3\, ; n_{12}\, n_{13}\, n_{23}]
\leftrightarrow
\left[
\begin{smallmatrix}
n_1 \\
n_2 \\
n_3 \\
n_{12} \\
n_{13} \\
n_{23} \\
\end{smallmatrix}
\right].
\]
We also use the notations
\[
\Sy^i(a^{-1})=\Sy^{-i}(a) \quad
{\rm{and}} \quad
\Sy^j(\wedge^2(a^{-1}))=\Sy^{-j}(\wedge^2(a)).
\]

\begin{proposition}
We have
\begin{tiny}
\begin{align*}
a_p(N)=& \, 
p^{i+2j+3k-6}a(N/p)
+
a(p\, N)\\
&+
p^{i+2j+2k-5}
\hspace{-8pt}
\sum_{0\leq u,v\leq p-1}
\hspace{-10pt}
\Sy^{-i}
\left(
\begin{smallmatrix}
1 & 0 & u  \\
0 & 1 & v  \\
0 & 0 & p 
\end{smallmatrix}
\right)
\Sy^{-j}(\wedge^2(
\left(
\begin{smallmatrix}
1 & 0 & u  \\
0 & 1 & v  \\
0 & 0 & p 
\end{smallmatrix}
\right))
a(\left[
\begin{smallmatrix}
(n_1+n_{13}u+n_3u^2)/p \\
(n_2+n_{23}v+n_3v^2)/p  \\
pn_3 \\
(n_{12}+n_{23}u+n_{13}v+2n_3uv)/p \\
n_{13}+2n_3u \\
n_{23}+2n_3v 
\end{smallmatrix}
\right])\\
&+
p^{i+2j+2k-5}
\hspace{-8pt}
\sum_{0\leq u \leq p-1}
\hspace{-10pt}
\Sy^{-i}
\left(
\begin{smallmatrix}
1 & u & 0  \\
0 & 0 & 1  \\
0 & p & 0 
\end{smallmatrix}
\right)
\Sy^{-j}(\wedge^2(
\left(
\begin{smallmatrix}
1 & u & 0  \\
0 & 0 & 1  \\
0 & p & 0 
\end{smallmatrix}
\right))
a(\left[
\begin{smallmatrix}
(n_1+n_{12}u+n_2u^2)/p \\
n_3/p  \\
pn_2 \\
(n_{13}+2n_{23}u)/p \\
n_{12}+2n_2u \\
n_{23}
\end{smallmatrix}
\right])\\
&+
p^{i+2j+2k-5}
\Sy^{-i}
\left(
\begin{smallmatrix}
0 & 0 & 1  \\
0 & 1 & 0  \\
p & 0 & 0 
\end{smallmatrix}
\right)
\Sy^{-j}(\wedge^2(
\left(
\begin{smallmatrix}
0 & 0 & 1  \\
0 & 1 & 0  \\
p & 0 & 0 
\end{smallmatrix}
\right))
a([n_3/p\,n_2/p\, pn_1\,; n_{23}/p\, n_{13}/p\, n_{23}])\\
&+
p^{i+2j+k-3}
\hspace{-8pt}
\sum_{0\leq u,v\leq p-1}
\hspace{-10pt}
\Sy^{-i}
\left(
\begin{smallmatrix}
1 & u & v  \\
0 & p & 0  \\
0 & 0 & p 
\end{smallmatrix}
\right)
\Sy^{-j}(\wedge^2(
\left(
\begin{smallmatrix}
1 & u & v  \\
0 & p & 0  \\
0 & 0 & p 
\end{smallmatrix}
\right))
a(\left[
\begin{smallmatrix}
(n_1+n_{12}u+n_{13}v+n_2u^2+n_{23}uv+n_3v^2)/p \\
p\,n_2  \\
p\,n_3 \\
n_{12}+2n_2u+n_{23}v \\
n_{13}+n_{23}u +2n_3v\\
p\, n_{23} 
\end{smallmatrix}
\right])\\
&+
p^{i+2j+k-3}
\hspace{-8pt}
\sum_{0\leq u\leq p-1}
\hspace{-10pt}
\Sy^{-i}
\left(
\begin{smallmatrix}
0 & 1 & u  \\
p & 0 & 0  \\
0 & 0 & p 
\end{smallmatrix}
\right)
\Sy^{-j}(\wedge^2(
\left(
\begin{smallmatrix}
0 & 1 & u  \\
p & 0 & 0  \\
0 & 0 & p 
\end{smallmatrix}
\right))
a(\left[
\begin{smallmatrix}
(n_2+n_{23}u+n_3u^2)/p \\
p\,n_1  \\
p\,n_3 \\
n_{12}+n_{13}u \\
n_{23}+2n_3u\\
p\, n_{13} 
\end{smallmatrix}
\right])\\
&+
p^{i+2j+k-3}
\Sy^{-i}
\left(
\begin{smallmatrix}
0 & 0 & 1  \\
0 & p & 0  \\
p & 0 & 0 
\end{smallmatrix}
\right)
\Sy^{-j}(\wedge^2(
\left(
\begin{smallmatrix}
0 & 0 & 1  \\
0 & p & 0  \\
p & 0 & 0 
\end{smallmatrix}
\right))
a([n_3/p\,pn_2\, pn_1\,; n_{23}\, n_{13}\, pn_{12}])\\
\end{align*}
\end{tiny}
where $a([n_1\, n_2\, n_3\, ; n_{12}\, n_{13}\, n_{23}])=0$ if $n_1,\ldots, n_{23}$ are not all integral.
\end{proposition}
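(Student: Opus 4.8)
The plan is to apply the left coset decomposition of the preceding proposition and to compute, for each representative $\sigma=\left(\begin{smallmatrix} A & B\\ C & D\end{smallmatrix}\right)$, the slash action $F|_{i,j,k}\,\sigma$, then read off the coefficient of $e^{2\pi i\,{\rm Tr}(N\tau)}$ in $p^{i+2j+3k-6}\sum_{\sigma}F|_{i,j,k}\,\sigma$. In every one of the eight families the lower-left block vanishes, $C=0$, so $C\tau+D=D$ is a constant matrix; moreover each representative has similitude $p$, so $A^{t}D=pI$ and hence $A=p(D^{t})^{-1}$. Therefore
\[
(F|_{i,j,k}\,\sigma)(\tau)=\det(D)^{-k}\,\Sy^{-i}(D)\,\Sy^{-j}(\wedge^2(D))\,F\!\big(p(D^{t})^{-1}\tau D^{-1}+BD^{-1}\big),
\]
where we use $\Sy^{i}(a^{-1})=\Sy^{-i}(a)$ and $\wedge^2(D^{-1})=(\wedge^2D)^{-1}$. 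The decisive structural observation is that the eight families are in bijection with the eight terms of the claimed formula: the block $D$ of the $\nu$th family is precisely the $3\times 3$ matrix that appears inside $\Sy^{-i}$ and $\Sy^{-j}(\wedge^2(\cdot))$ in the $\nu$th term.

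For a fixed $N$ I would expand $F(\tau')=\sum_{M}a(M)\,e^{2\pi i\,{\rm Tr}(M\tau')}$ with $\tau'=p(D^{t})^{-1}\tau D^{-1}+BD^{-1}$ and use ${\rm Tr}(M\tau')=p\,{\rm Tr}\big(D^{-1}M(D^{t})^{-1}\tau\big)+{\rm Tr}(MBD^{-1})$. Matching exponents shows that the summand feeding $e^{2\pi i\,{\rm Tr}(N\tau)}$ is the one with $M=\tfrac1p\,DND^{t}$, whose entries are exactly the ones appearing — divided by $p$ — as the argument of $a(\cdot)$ in the corresponding term; the convention that $a(\cdot)$ vanishes on matrices that are not half-integral encodes the implicit divisibility conditions. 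The remaining parameters split into two kinds. The entries of $B$ enter only through the additive shift ${\rm Tr}(MBD^{-1})$, which is $\ZZ$-valued on the locus $M=\tfrac1p DND^{t}$ because $N$ (hence $M$ there) is half-integral, so summing over them collapses to a character sum equal to $p$ raised to the number of free entries of $B$, possibly with an extra divisibility condition that is again absorbed into the vanishing convention. The parameters $u,v$, by contrast, appear also in $A$ and $D$, and therefore survive as the explicit sums $\sum_{0\le u,v\le p-1}$, or $\sum_{0\le u\le p-1}$, of the formula.

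The two families with scalar $D$ are immediate. For ${\rm diag}(p,p,p,1,1,1)$ one has $D=1_{3}$ and the substitution $\tau\mapsto p\tau$, giving the term $p^{i+2j+3k-6}a(N/p)$. For the unipotent family $\left(\begin{smallmatrix} 1_{3} & B\\ 0 & p1_{3}\end{smallmatrix}\right)$ with $B$ symmetric modulo $p$, the substitution is $\tau\mapsto(\tau+B)/p$, and after the sixfold sum over $B$ and the cancellation $p^{i+2j+3k-6}\cdot p^{-3k}\cdot p^{-i}\cdot p^{-2j}\cdot p^{6}=1$ this family contributes exactly $a(pN)$. For each of the six remaining families the recipe is the same: invert $D$, form $p(D^{t})^{-1}\tau D^{-1}+BD^{-1}$ explicitly, substitute, identify $M=\tfrac1p DND^{t}$, and combine the scalar $p^{i+2j+3k-6}\det(D)^{-k}$ with the powers of $p$ produced by the $B$-character sums; this yields the prefactor $p^{i+2j+2k-5}$ for the three families with $|\det D|=p$ and $p^{i+2j+k-3}$ for the three with $|\det D|=p^{2}$, matching the statement.

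I expect the only real difficulty to be organizational: performing the matrix product $p(D^{t})^{-1}\tau D^{-1}$ and the induced substitution $N\mapsto\tfrac1p DND^{t}$ without error for the families whose block $D$ mixes a permutation with the scalings by $p$ (the sixth, seventh and eighth terms in particular), together with the linear algebra showing that the $B$-traces are integral linear forms so that the character sums behave as claimed, and being careful to record the automorphy factors as $\Sy^{-i}(D)$ and $\Sy^{-j}(\wedge^2 D)$ in terms of $D$ itself rather than $D^{-1}$ or $D^{t}$. Once the eight substitutions are written out, matching them against the eight displayed terms is routine.
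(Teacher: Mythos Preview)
Your proposal is correct and is exactly the computation the paper leaves implicit: the proposition is stated without proof, as a direct consequence of the preceding left coset decomposition, and your sketch carries out precisely that derivation. The identification of the eight coset families with the eight displayed terms via their $D$-blocks, the substitution $M=\tfrac1p DND^{t}$, and the bookkeeping of the powers of $p$ (yielding $p^{i+2j+2k-5}$ when $|\det D|=p$ and $p^{i+2j+k-3}$ when $|\det D|=p^2$) are all as expected; the only remaining work is the routine matrix algebra you describe.
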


\end{section}


\begin{thebibliography}{99}

\bibitem{AndrianovBook} A.\ Andrianov:
Hecke operators and quadratic forms.
Grundlehren der Mathematischen Wissenschaften  286. 
Springer-Verlag, Berlin, 1987. 

\bibitem{Andrianov} A.\ Andrianov:
Euler products corresponding to Siegel modular forms of genus 2.
{\sl Russian Math.\ Surveys \bf 29}, (1974), 45--116.

\bibitem{B-F-vdG} J.\ Bergstr\"om, C.\ Faber, G.\ van der Geer: 
Siegel modular forms of degree three and the cohomology of local systems.
{\sl Selecta Math.\ \bf 20} (2014),  83–-124.

\bibitem{BFW} R.\ Borcherds, E.\ Freitag, R.\ Weissauer:
A Siegel cusp form of degree 12 and weight 12.
{\sl J.\ Reine Angew.\ Math.\  \bf 494} (1998), 141–-153. 

\bibitem{C} G.\ Chenevier: Repr\'esentations galoisiennes automorphes et
cons\'equences arithm\'etiques des conjectures de Langlands et Arthur.
{\tt arXiv:1307.5170v1}.

\bibitem{C-R} G.\ Chenevier, D.\ Renard: 
Level one algebraic cusp forms of classical groups of small ranks.
{\tt arXiv:1207.0724 }


\bibitem{C-vdG-G} F.\ Cl\'ery, G.\ van der Geer, S.\ Grushevsky:
Siegel modular forms of genus $2$ and level $2$.
{\tt arXiv:1306.6018}. {\sl Int.\ Journal of Math.\ \bf 26},  (2015). 

\bibitem{F-vdG} C.\ Faber, G.\ van der Geer:
{\sl Sur la cohomologie des syst\`{e}mes locaux sur les espaces des modules des courbes de genre $2$ et des surfaces ab\'{e}liennes},
I, II. C.R.\ Acad.\ Sci.\ Paris, S\'er.\ I \textbf{338} (2004), 381--384, 467--470.

\bibitem{vdG1} G.\ van der Geer: {\sl
Siegel modular forms and their applications.}
In: J.\ Bruinier, G.\ van der Geer, G.\ Harder, D.\ Zagier:
The 1-2-3 of modular forms. Springer Verlag, 2008.

\bibitem{Ibukiyama}  T.\ Ibukiyama:
On differential operators on automorphic forms and invariant pluri-harmonic polynomials.
{\sl Comm.\ Math.\ Univ.\ Sancti Pauli \bf 48} (1999), 103--117.

\bibitem{Igusa1} J.I.\ Igusa: Schottky's invariant and quadratic forms.
In: Christoffel Symposium, Birkhauser Verlag, 1981,  352--362.

\bibitem{IgusaBAMS} J.I.\ Igusa:
Problems on abelian functions at the time of Poincar\'e and 
some at present.  
{\sl Bull.\ Amer.\ Math.\ Soc.\ (N.S.) \bf 6 } (1982), 161–-174.

\bibitem{Ikeda} Ikeda:
On the lifting of elliptic cusp forms to Siegel cusp forms of degree $2n$.
{\sl Ann.\ of Math.\  \bf 154}, (2001), 641--681.

\bibitem{Keaton} R.\ Keaton: Eigenvalues of Ikeda lifts. Preprint  \\
{\tt http://people.clemson.edu/~rkeaton/Research/Ikeda{\%}20Eigenvalues.pdf}

\bibitem{Miyawaki} I.\ Miyawaki:
Numerical examples of Siegel cusp forms of degree 3 and their zeta-function ,
{\sl Memoirs of the Faculty of Science, Kyushu University, Ser. A \bf 46}, 
307--339 (1992)

\bibitem{N-V} G.\ Nebe, B.\ Venkov:
On Siegel modular forms of weight $12$.
{\sl J.\ Reine Angew.\ Math.\ \bf 531} (2001), 49-–60. 

\bibitem{Witt} E.\ Witt: Eine Identit\"at zwischen Modulformen zweiten Grades.
{\sl Math. Sem.\ Hamburg \bf 14} 1941,  323--337.

\bibitem{PoorYuen} C.\ Poor, D.\ Yuen:
Computations of spaces of Siegel modular cusp forms.
{\sl J.\ Math.\ Soc.\ Japan \bf 59}, No. 1 (2007), 185--222.

\bibitem{Sasaki} R.\ Sasaki:
Modular forms vanishing at the reducible points of the Siegel upper-half space. {\sl J.\ Reine Angew.\ Math.\ \bf 345} (1983), 111–-121.

\bibitem{Taibi} O.\ Ta\"{\i}bi:
Dimensions of spaces of level one automorphic forms for split classical groups using the trace formula.
{\tt  arXiv:1406.4247}

\bibitem{Tsuyumine} S.\ Tsuyumine:
On Siegel modular forms of degree three. 
{\sl Amer.\ J.\ Math.\ \textbf{108} } (1986), no. 4, 755–-862. 
Addendum to: ``On Siegel modular forms of degree three". 
{\sl Amer.\ J.\ Math.\ \textbf{108} } (1986), no. 4, 1001–-1003.




\end{thebibliography}
\end{document}